\let\wfs@comment@comment\comment
\let\comment\@undefined
\let\wfs@changes@comment\comment
\let\comment\@undefined
\newcommand\comment{%
    \ifthenelse{\equal{\@currenvir}{comment}}
    {\wfs@comment@comment}
    {\wfs@changes@comment}%
}
\newcommand\myup{\mathrel{\stackrel{\makebox[0pt]{\mbox{\normalfont\tiny up}}}{\sim}}}
\newcommand\mydn{\mathrel{\stackrel{\makebox[0pt]{\mbox{\normalfont\tiny dn}}}{\sim}}}
\def\namedlabel#1#2{\begingroup
    #2%
    \def\@currentlabel{#2}%
    \phantomsection\label{#1}\endgroup
}
\newtheorem{theorem}{Theorem} 
\newtheorem{lemma}[theorem]{Lemma}
\newtheorem{proposition}[theorem]{Proposition}
\newtheorem{corollary}[theorem]{Corollary}
\theoremstyle{definition}
\newtheorem{definition}[theorem]{Definition}
\newtheorem{remark}[theorem]{Remark}
\newtheorem{example}[theorem]{Example}
\newtheorem{notation}[theorem]{Notation}
\newcommand{\mL}{\mathcal{L}}
\newcommand{\mC}{\mathcal{C}}
\newcommand{\mP}{\mathcal{P}}
 \newcommand{\mB}{\mathcal{B}}
   \newcommand{\M}{\mathcal{M}}
 \newcommand{\Fq}{\mathbb{F}_q}
 \newcommand{\Z}{\mathbb{Z}}
\newcommand{\F}{\mathbb{F}}
\newcommand{\one}{{\mathbf{1}}}
\newcommand{\zero}{{\mathbf{0}}}
\newcommand\qbin[3]{\begin{small}\left[\begin{matrix} #1 \\ #2 \end{matrix} \right]_{#3}\end{small}} 
\DeclareMathOperator{\cl}{cl}
\DeclareMathOperator{\w}{w}
\DeclareMathOperator{\tr}{Tr}
\DeclareMathOperator{\h}{h}
\providecommand{\keywords}[1]
{
  	
  \noindent\textbf{{\small \textbf{Keywords.}}} #1
}
\title{Invariants of Tutte Partitions and a $q$-Analogue}
\author[1]{Eimear Byrne}
\affil[1,2]{School of Mathematics and Statistics, University College Dublin, Belfield, Ireland}
\author[2]{Andrew Fulcher}
\date{}
\begin{document}

\maketitle

\begin{abstract}
   We describe a construction of the Tutte polynomial for both matroids and $q$-matroids based on an appropriate partition of the underlying support lattice into intervals that correspond to prime-free minors, which we call a Tutte partition. We show that such partitions in the matroid case include the class of partitions arising in Crapo's definition of the Tutte polynomial, while not representing a direct $q$-analogue of such partitions.
   We propose axioms of a $q$-Tutte-Grothendieck invariant and show that this yields a $q$-analogue of a Tutte-Grothendieck invariant. We establish the connection between the rank generating polynomial and the Tutte polynomial, showing that one can be obtained from the other by convolution.
\end{abstract}

\keywords{$q$-analogue, $q$-matroid, Tutte partition, Tutte polynomial, rank generating function}

\smallskip
\noindent {\bf MSC2020.} 05B35, 06C10, 06C15

\tableofcontents



\section{Introduction}

The Tutte polynomial of a matroid is a fundamental invariant from which several other matroid invariants may be obtained. It is very well-studied and there are numerous papers on the topic; see, for example, \cite{britztutte,brylawski_oxley_1992,brylowski_ams,BERNARDI2022108355,CAMERON2022105584}. In particular, those functions of a matroid $M$ that can be expressed
recursively as functions on a pair of disjoint minors of $M$ found by deletion and contraction of $M$, respectively, may be expressed in terms of the Tutte polynomial of $M$. The Tutte polynomial of $M$ carries information common to all matroids in the same isomorphism class of $M$. For example, the number of bases, number of independent sets, and number of spanning sets of a matroid can be found by taking different evaluations of its Tutte polynomial. The characteristic polynomial of $M$ is obtained by an evaluation in one of the variables of its Tutte polynomial, while its rank function is obtained by a linear substitution of variables. The Tutte polynomial is an invariant of matroid duality: that is the Tutte polynomial of the dual matroid $M^*$ determined by the Tutte polynomial of $M$. In the case of representable matroids, the celebrated MacWilliams duality theorem in coding theory can be retrieved as a special case of a duality result on the Tutte polynomial.

While the origins of the Tutte polynomial lie in graph theory, Crapo defined this invariant for matroids by extending the notions of internal and external activity of the bases of a graph, defined originally by Tutte \cite{C69,tutte}, to the setting of matroids. One way of looking at Crapo's Tutte polynomial is to see it as a sum of monomial terms, each associated with a different element of a partition of the underlying Boolean lattice into intervals that contains a unique basis of the matroid. That different partitions of the lattice yield the same polynomial can be seen by the connection of this polynomial to the rank generating polynomial of the matroid via an invertible linear transformation, as shown by Crapo in \cite{C69}.

In \cite{brylowski_alguniv}, Brylawski defined the Tutte-Grothendieck ring in order to study functions invariant under certain types of decompositions. In the category with objects multisets of isomorphism classes of matroids and morphisms these matroid decompositions, the polynomial defined by a universal property coincides with the Tutte polynomial as defined earlier by Crapo. In this context, the Tutte-Grothediek invariants satisfy two fundamental recursions with respect to taking matroid minors.

Since the publication of \cite{JP18} and \cite{gorla2019rank,shiromoto19} shortly thereafter, there has been a good deal of interest in studying $q$-analogues of matroids and polymatroids \cite{BCJ_JCTB,GLJ,dir_sum_jur_cer,gorla2019rank,GLJindep,shiromoto19,IMAMURA,shellability}. A $q$-matroid is defined in the same way as a matroid: it comprises a modular lattice endowed with a rank function satisfying certain rank axioms. The concept of such generalisations of matroids were already evident in Crapo's early work \cite{crapo1964theory}. The main difference between the two objects is that while a finite matroid consists of a rank function defined on the lattice of subsets of a finite set, the underlying lattice in the case of a $q$-matroid is the lattice of subspaces of a finite-dimensional vector space. We have chosen to present our results to encompass both the matroid and $q$-matroid case. That is, for most of the paper, $M$ denotes either a matroid or a $q$-matroid, with the matroid case often emerging as a result of the specialization $q \longrightarrow 1$. 

A first attempt to define the Tutte polynomial of a $q$-matroid has been given in \cite{BCJ17}. Building on this preliminary work, in this paper we describe a construction of the Tutte polynomial based on an appropriate partition of the underlying support lattice into intervals that correspond to prime-free minors, which we call a Tutte partition. Somewhat surprisingly, we show that such partitions in the matroid case include the class of partitions arising in \cite{C69}, while not representing a direct $q$-analogue of such partitions the latter forming a strict subclass of the Tutte partitions defined here. Our description yields a Tutte polynomial both for matroids and $q$-matroids. This is our principal contribution, which is motivated by the need to address this fundamental gap in $q$-matroid theory. Having said that, our proof of the existence of a Tutte partition for a given $q$-matroid depends on the existence of a {\em proper interval decomposition} (defined in \cite{HITZEMANN20103551}), which is known to not exist for some classes of subspace lattices. We posit that the notion of a Tutte partition is an important step towards a definition of a Tutte polynomial that encompasses all $q$-matroids, which is a point we outline at the end of this paper with the help of an example.

We furthermore consider the notion of a Tutte-Grothendieck invariant. We define a new set of axioms and call the functions on the equivalence classes of matroids or $q$-matroids that satisfy this set of axioms of a $q$-Tutte-Grothendieck invariant. We show that in the case of a finite, Boolean lattice (the matroid case) these axioms, which are defined in reference to the concept of a matroid being prime-free or not prime-free, are equivalent to the classical axioms of a Tutte-Grothendieck invariant defined by Brylawski.

We consider the relation between the Tutte polynomial we define here and the rank generating polynomial and prove that one can be obtained from the other by convolution. We show that while the Tutte polynomial is a $q$-Tutte-Grothendieck invariant both in the matroid and $q$-matroid case, in the case of $q$-matroid, the rank generating polynomial is not.

This paper is organised as follows. In Section \ref{sec:prelim} we gather together basic results on lattices, matroidal $\{0,1\}$-weightings (bicolourings) and rank functions. In Section \ref{sec:q-Tutte_partition} we define the notion of a Tutte partition (in fact we give two equivalent definitions) and describe its properties. With this notion in place, we hence define a $q$-analogue of the Tutte polynomial with respect to a given Tutte partition. In Section \ref{sec:inv}, we give an inversion formula, which describes a change of basis in the polynomial ring $\Z[x,y]$. That is, we present a pair of mutually inversive maps $\alpha,\beta$ on $\Z[x,y]$. This answers a conjecture given in \cite{BCJ17}. In Section \ref{sec:ranktutte}, we show that the rank generating polynomial and any Tutte polynomial defined as in Section \ref{sec:q-Tutte_partition} can be retrieved from each other by applying the maps $\alpha$ or $\beta$ of Section \ref{sec:inv}, as appropriate. From this it follows that the Tutte polynomial defined with respect to a given Tutte partition is independent of the choice of Tutte partition and further, that we retrieve classical Tutte polynomial in the matroid case. For this reason, the Tutte polynomial we define may be referred to simply as the Tutte polynomial of the given matroid or $q$-matroid. In Section \ref{sec:q-T-G_invar} we consider the functions on the class of either matroids or $q$-matroids that satisfy certain recursion formulae with respect to minors. We introduce the notion of a $q$-Tutte-Grothendieck invariant, and give a set of axioms defining any such invariant. We show that while the Tutte polynomial of a $q$-matroid is a $q$-Tutte-Grothendieck invariant, (unlike the matroid case) the rank generating polynomial of a $q$-matroid is not. In Section \ref{sec:return_to_matroid_case}, we specialize to the matroid case. We first consider the Tutte partition introduced in Section \ref{sec:q-Tutte_partition} and compare this with the partition of the matroid into minors that arises in Crapo's description of the Tutte polynomial given in \cite{C69}, which we call a Crapo-Tutte partition. We show that while every Crapo-Tutte partition is also a Tutte partition, the converse does not hold: for matroids the Tutte partitions form a strictly larger class of partitions than those defined by Crapo, which extend Tutte's definition of the internal and external activity of the bases of a graph. We also compare the axioms of a $q$-Tutte-Grothendieck invariant with those of a Tutte-Grothedieck invariant \cite{brylawski_oxley_1992} and show that for matroids, both axiom systems are equivalent. Finally, in Section \ref{sec:furtherpropertise}, we consider some basic properties of the Tutte and rank generating polynomials and some alternative representations of them as linear maps. 

\section{Preliminaries}\label{sec:prelim}

We introduce some notation that will be used throughout this paper.

\begin{notation}
    Throughout this paper, we let $q$ denote a prime power and we let $\Fq$ denote the finite field of order $q$. For any positive integer $n$, we write $[n]:=\{1,\dots,n\}$.
\end{notation}
\begin{definition}
   Let $n,k$ be non-negative integers. Let $q \in \Z$.
    \begin{enumerate}
    \item If $q\neq 1$, then $[n]_q:=\frac{q^n-1}{q-1}$; if $q=1$, then $[n]_q:=n$.
    \item The $q$-factorial of $n$ is defined by $[n]_q!:=\displaystyle\prod_{k=1}^n[k]_q$. 
    \item The $q$-binomial coefficient is defined by 
    \[
        \qbin{n}{k}{q}:=\left\{
        \begin{array}{cc}
            \displaystyle\frac{[n]!_q}{[n-k]!_q[k]!_q} & \text{ if } k\leq n\\
            0 & \text{ if } k > n
        \end{array}
        \right.
    \]

\end{enumerate}
\end{definition}

The $q$-analogue of a binomial coefficient is called the $q$\emph{-binomial coefficient} or {\em Gaussian coefficient}; $\left[\begin{smallmatrix}n\\k \end{smallmatrix}\right]_q$ counts the number of $k$-dimensional subspaces of the vector space $\mathbb{F}_q^n$.
We recall the Pascal and $q$-Pascal identities for binomial and $q$-binomial coefficients respectively, and the $q$-binomial theorem.
These well-known identities, as well as the $q$-Binomial Theorem, can be found in texts such as \cite{qdist,stanley:1997}.
\begin{theorem}
For $k,n\in\mathbb{Z}$, if $0\leq k\leq n$, the following identities hold:
\begin{align*}
    \binom{n}{k}&=\binom{n}{k-1}+\binom{n-1}{k-1},\\
    \left[\begin{matrix}n\\k \end{matrix}\right]_q&=q^k\left[\begin{matrix}n\\k-1 \end{matrix}\right]_q+\left[\begin{matrix}n-1\\k-1 \end{matrix}\right]_q,\\
    \left[\begin{matrix}n\\k \end{matrix}\right]_q&=\left[\begin{matrix}n\\k-1 \end{matrix}\right]_q+q^{n-k}\left[\begin{matrix}n-1\\k-1 \end{matrix}\right]_q.
\end{align*}
\end{theorem}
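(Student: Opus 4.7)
The plan is to verify all three identities directly from the definitions of the (ordinary and $q$-) binomial coefficients, reducing the two $q$-Pascal identities to a single scalar decomposition of $[n]_q$.

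For the classical Pascal identity, the quickest proof is combinatorial: fixing an element $e$ of an $n$-set, every $k$-subset either contains $e$ or does not, which accounts for the two terms on the right-hand side. For the two $q$-Pascal identities I would proceed algebraically. Writing $\qbin{n}{k}{q} = [n]!_q/([n-k]!_q\,[k]!_q)$ and clearing denominators in the right-hand sides, everything reduces to establishing the two splittings
\[
[n]_q \;=\; q^k [n-k]_q + [k]_q \;=\; [n-k]_q + q^{n-k} [k]_q,
\]
which follow immediately from the factorizations $q^n - 1 = q^k(q^{n-k}-1) + (q^k - 1) = (q^{n-k}-1) + q^{n-k}(q^k-1)$ after dividing through by $q-1$. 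Each splitting corresponds to one of the two $q$-Pascal identities, and the classical Pascal identity is recovered by specializing $q \to 1$ in either.

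The main, and rather minor, obstacle is bookkeeping: one has to track carefully which factor picks up $q^k$ versus $q^{n-k}$, since the two $q$-Pascal identities differ precisely in this asymmetric weight. A combinatorial alternative is available, namely decomposing the $k$-dimensional subspaces of $\mathbb{F}_q^n$ according to their intersection with a fixed hyperplane, or their containment of a fixed line, and matching the corresponding enumeration with each of the two identities. However, the algebraic route via the single scalar identity above is both uniform and short, and it makes transparent why the same proof degenerates to the ordinary Pascal identity under $q \to 1$.
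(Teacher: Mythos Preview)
The paper does not actually prove this theorem; it simply states the identities as ``well-known'' and refers the reader to standard texts, so there is no paper proof to compare against.

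More importantly, you should notice that the identities \emph{as printed} are false: the first term on each right-hand side should be $\binom{n-1}{k}$ (respectively $\qbin{n-1}{k}{q}$), not $\binom{n}{k-1}$ (respectively $\qbin{n}{k-1}{q}$). For instance, with $n=4$, $k=2$ the first line would read $6=4+3$, and with $n=3$, $k=2$ the second line gives $1+q+q^2$ on the left but $1+q+q^2+q^3+q^4$ on the right. Your combinatorial argument (splitting $k$-subsets according to whether they contain a fixed element $e$) and your algebraic reduction
\[
[n]_q \;=\; q^k[n-k]_q + [k]_q \;=\; [n-k]_q + q^{n-k}[k]_q
\]
both establish the \emph{correct} $q$-Pascal identities
\[
\qbin{n}{k}{q}=q^k\qbin{n-1}{k}{q}+\qbin{n-1}{k-1}{q},\qquad
\qbin{n}{k}{q}=\qbin{n-1}{k}{q}+q^{n-k}\qbin{n-1}{k-1}{q},
\]
and are perfectly sound for those. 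But neither argument proves the stated versions, and you do not flag the discrepancy. When proving a cited result, it is worth checking that the statement you are handed is actually the one your argument delivers.
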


\begin{theorem}[The $q$-Binomial Theorem]
For $n,t\in\mathbb{Z}$, the following identity holds:
$$\displaystyle\prod_{k=0}^{n-1}(1-q^kt)=\sum_{k=0}^n(-1)^kq^{\binom{k}{2}}\left[\begin{matrix}n\\k \end{matrix}\right]_qt^k.$$
\end{theorem}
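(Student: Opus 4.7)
The plan is to prove the $q$-Binomial Theorem by induction on $n$, using one of the two $q$-Pascal identities stated in the preceding theorem. The base case $n=0$ is immediate: the left-hand side is the empty product $1$ and the right-hand side reduces to the single term $k=0$, which is also $1$.

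For the inductive step, assume the identity holds for $n-1$. I would split off the last factor and write
\[
\prod_{k=0}^{n-1}(1-q^k t) = (1-q^{n-1}t)\sum_{k=0}^{n-1}(-1)^k q^{\binom{k}{2}} \qbin{n-1}{k}{q} t^k.
\]
Expanding the product on the right yields two sums; reindexing $j=k+1$ in the second puts everything into the form $\sum_j c_j t^j$ with $0\le j\le n$. The key algebraic simplification is the identity
\[
n-1+\binom{j-1}{2} = (n-j)+\binom{j}{2},
\]
which lets me factor $q^{\binom{j}{2}}$ cleanly out of the coefficient coming from the $-q^{n-1}t$ term. After this manipulation, for each $1\le j\le n-1$ the coefficient of $t^j$ is
\[
(-1)^j q^{\binom{j}{2}}\left(\qbin{n-1}{j}{q}+q^{n-j}\qbin{n-1}{j-1}{q}\right).
\]

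I would then apply the third $q$-Pascal identity from the cited theorem, namely $\qbin{n}{j}{q}=\qbin{n-1}{j}{q}+q^{n-j}\qbin{n-1}{j-1}{q}$, to collapse the bracket into $\qbin{n}{j}{q}$. The endpoint cases are handled separately: at $j=0$ only the first sum contributes and gives $\qbin{n-1}{0}{q}=1=\qbin{n}{0}{q}$, while at $j=n$ only the reindexed sum contributes, giving $(-1)^n q^{\binom{n}{2}}\qbin{n-1}{n-1}{q}=(-1)^n q^{\binom{n}{2}}\qbin{n}{n}{q}$. Assembling the three ranges produces the claimed expression for $\prod_{k=0}^{n-1}(1-q^kt)$.

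No real obstacle is expected; the only point where care is needed is the bookkeeping of the exponent of $q$ after reindexing, specifically the identity $n-1+\binom{j-1}{2}=(n-j)+\binom{j}{2}$, which must match precisely so that the appropriate $q$-Pascal identity (the one with the factor $q^{n-j}$ on the second term) applies. Choosing the other $q$-Pascal identity at that step would force a different reindexing, so matching the shape of the recurrence to the reindexed sum is the essential bookkeeping step.
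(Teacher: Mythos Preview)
Your inductive argument is correct and is the standard textbook proof of the $q$-Binomial Theorem. The paper itself does not supply a proof of this statement; it is stated as background and the reader is referred to standard references such as \cite{qdist,stanley:1997}, so there is nothing in the paper to compare against beyond noting that your approach is precisely the one found in those references.

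One minor remark: the identity you invoke, $\qbin{n}{j}{q}=\qbin{n-1}{j}{q}+q^{n-j}\qbin{n-1}{j-1}{q}$, is the correct form of the $q$-Pascal recursion, but note that the version printed in the paper's preceding theorem reads $\qbin{n}{k}{q}=\qbin{n}{k-1}{q}+q^{n-k}\qbin{n-1}{k-1}{q}$, which appears to contain a typographical error (the first term on the right should be $\qbin{n-1}{k}{q}$). You have silently used the correct identity, so your argument goes through; just be aware that quoting the paper's displayed formula verbatim would not yield the combination you need.
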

The following well-known theorem can be read, for example, in \cite[Theorem 7]{tingley91}.
\begin{theorem}\label{thm:tingley}
Let $V$ be an $n$-dimensional vector space over $\F_q$ and let $W$ be an $m$-dimensional subspace of $V$. Then $|\{U\leq V:U\oplus W=V\}|=q^{m(n-m)}$.
\end{theorem}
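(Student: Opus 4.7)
The plan is to count the complements of $W$ by putting them in bijection with the $\F_q$-vector space $\mathrm{Hom}(U_0, W)$, where $U_0$ is any fixed complement of $W$. Since $\dim U_0 = n-m$ and $\dim W = m$, this Hom space has cardinality $q^{m(n-m)}$, which is the desired count.

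First I would fix an arbitrary complement $U_0$ of $W$; this exists because every finite-dimensional subspace has a complement. For each linear map $\phi \in \mathrm{Hom}(U_0, W)$, I would define
\[
    U_\phi := \{\, u + \phi(u) : u \in U_0 \,\},
\]
which is the graph of $\phi$ and is manifestly an $\F_q$-subspace of $V$. A short check shows $\dim U_\phi = n-m$ (since the map $u \mapsto u + \phi(u)$ is injective on $U_0$) and $U_\phi \cap W = 0$ (if $u + \phi(u) \in W$ then $u \in W \cap U_0 = 0$). Consequently $U_\phi \oplus W = V$ by dimension count.

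Next I would verify that the assignment $\phi \mapsto U_\phi$ is a bijection onto the set of complements. For injectivity, if $U_\phi = U_\psi$, then for each $u \in U_0$ there exists $u' \in U_0$ with $u + \phi(u) = u' + \psi(u')$, which forces $u - u' \in U_0 \cap W = 0$, hence $u = u'$ and $\phi = \psi$. For surjectivity, given any complement $U$ of $W$, every $u \in U_0$ decomposes uniquely as $u = u^* - w$ with $u^* \in U$ and $w \in W$; setting $\phi(u) := w$ gives a well-defined linear map $U_0 \to W$ (linearity follows from uniqueness of the decomposition $V = U \oplus W$), and by construction $u + \phi(u) = u^* \in U$. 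Since both $U$ and $U_\phi$ have dimension $n-m$ and $U_\phi \subseteq U$, we conclude $U = U_\phi$.

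The argument is almost entirely formal, so there is no real obstacle; the only point that requires a moment of care is the surjectivity step, where one must check that the map $\phi$ extracted from a given complement $U$ is actually $\F_q$-linear. This follows immediately from the uniqueness of decomposition along $V = U \oplus W$. Having established the bijection, the count is
\[
    |\{\, U \le V : U \oplus W = V \,\}| = |\mathrm{Hom}_{\F_q}(U_0, W)| = q^{(n-m)m},
\]
as required.
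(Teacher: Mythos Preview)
Your proof is correct and is the standard graph-of-a-linear-map argument for counting complements. The paper does not actually prove this statement: it is quoted as a well-known result with a reference to \cite[Theorem 7]{tingley91}, so there is no proof in the paper to compare against. Your argument stands on its own and would serve perfectly well as a self-contained proof.
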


All of the above formulae and identities have classical counterparts, which can be retrieved by setting $q=1.$ In addition, we will use the following notation.

\begin{definition}\label{def:negbin}
    Let $n,k$ be nonnegative integers. Then
    $$\binom{-n}{k}:=\frac{\prod_{j=0}^{k-1}(-n-j)}{k!} = (-1)^k \binom{n+k-1}{k}.$$
\end{definition}

We fix notation and recall some basic definitions on lattices and posets. While it is beyond the scope of this paper to give a review of lattices to those who are new to the theory, we present some of the relevant terminology for the convenience of the reader. Throughout this paper, we will assume that all posets and lattices are finite, unless explicitly stated otherwise. The reader is referred to \cite{gratz_lattice,romanlattices} for further reading on lattices.

\begin{definition}
	Let $(P,\leq)$ be a poset. The poset $P$ is called a \emph{lattice} if every finite subset of $P$ has a well-defined \emph{meet} and \emph{join}.
	That is, for any finite subset $S$ of $P$ the following holds.
	There exists $b \in P$, called the meet or \emph{greatest lower bound} of $S$ in $P$ with the property that $b \leq a$ for all $a \in S$ and if there exists $c \in P$ such that $c \leq a$ for all
		$a \in S$ then $c \leq b$.
	There exists $b \in P$, called the join or \emph{least upper bound} of $S$ in $P$ with the property that $a \leq b$ for all $a \in S$ and if there exists 
		$c \in P$ such that $a \leq c$ for all
		$a \in S$ then $b \leq c$.
\end{definition}

\begin{definition}\label{def:iso}
	Let $(\mL_1,\vee_1,\wedge_1),(\mL_2,\vee_2,\wedge_2)$ be a pair of lattices. 
	A \emph{lattice isomorphism} between $\mL_1$ and $\mL_2$ is a bijective function $\varphi:\mL_1\longrightarrow\mL_2$ that is order-preserving and that preserves the meet and join; 
	that is, for all $x,y\in\mL_1$ we have that 
	\begin{enumerate}
		\item $x \leq_1 y \implies \varphi(x)\leq_2\varphi(y)$,
		\item $\varphi(x\wedge_1 y)=\varphi(x)\wedge_2\varphi(y)$,
		\item $\varphi(x\vee_1 y)=\varphi(x)\vee_2\varphi(y)$.
	\end{enumerate}	 
\end{definition}
	It is straightforward to check that reversing the ordering of a lattice yields a lattice, with the meet and join interchanged. 
	Combining this operation with a lattice isomorphism gives a lattice anti-isomorphism. 
\begin{definition}\label{def:anti}
		Let $(\mL_1,\vee_1,\wedge_1),(\mL_2,\vee_2,\wedge_2)$ be a pair of lattices. A \emph{lattice anti-isomorphism} between $\mL_1$ and $\mL_2$ is a bijective function $\psi:\mL_1\longrightarrow\mL_2$ that is order-reversing and that interchanges the meet and join; that is, for all $x,y\in\mL_1$ we have: 
		\begin{enumerate}
			\item $x \leq_1 y \implies \varphi(y)\leq_2\varphi(x)$,
			\item $\varphi(x\wedge_1 y)=\varphi(x)\vee_2\varphi(y)$,
			\item $\varphi(x\vee_1 y)=\varphi(x)\wedge_2\varphi(y)$.
		\end{enumerate}
\end{definition}

\begin{definition}
Let $\mathcal{L}$ be a lattice with meet $\wedge$ and join $\vee$. Let \textbf{0} and \textbf{1} denote the minimal and maximal element of $\mL$, respectively. 
Let $a, b\in\mathcal{L}$ such that $a \leq b$.
\begin{enumerate}
    \item 
    An {\em interval} $[a,b]\subseteq\mathcal{L}$ is the set of all $x\in\mathcal{L}$ such that $a\leq x\leq b$. It defines the {\em interval sublattice} $([a,b],\leq,\vee,\wedge)$. 

    \item Let $b\in  \mL$. We say that $b$ is a {\em complement} of $a$ if
    $a \wedge b = \textbf{0}$ and $a \vee b = \textbf{1}$.
    
    \item $\mL$ is called {\em relatively complemented} if every 
    $x \in [a,b]$ has a complement in $([a,b],\leq,\vee,\wedge)$ and is called {\em complemented} if every $x \in \mL$ has a complement in $\mL$.

    \item If $[a,b]\subseteq L$ such that $x\in[a,b]$ implies that $x=a$ or $x=b$ for any $x\in L$, then $b$ is called a {\em cover} of $a$ and we write $a\lessdot b$. We also say that $b$ covers $a$ or that $a$ is covered by $b$.
    If $a\lessdot b$, we also call the interval $[a,b]$ a \emph{cover} in $\mL$.
    We write $\mC(\mL):=\{[a,b] \subset \mL : a\lessdot b \}$ to denote the set of (interval) covers of $\mL$.
    
    \item An {\em atom} or {\em point} of $\mL$ is any element that is a cover of $\textbf{0}$. A {\em coatom} or {\em copoint} of $\mL$ is any element that is covered by \textbf{1}.  
    
    \item A {\em chain} from $a$ to $b$ is a sequence of the form
    $ a< x_1< x_2< \ldots < x_k=x $ with $x_j\in\mathcal{L}$ for $j=1,\ldots,k$. We say that the chain has length $k$. The chain is called {\em maximal} if 
    $ a\lessdot x_1\lessdot x_2\lessdot \ldots \lessdot x_k=x $. 
    We write $\ell([a,b])$ to denote is the maximum length of any chain from $a$ to $b$.
    
    \item Let $x\in \mathcal{L}$. The \emph{height} of $x$, which we denote by $\h(x)$, is the maximum length of any chain from \textbf{0} to
    $x$.
    
    \item The interval $[a,b]$ is called a \emph{diamond} if $\ell([a,b])=2$. 
    
    \item $\mL$ is called {\em modular} if for all $a,b,c \in \mL$, we have that
$a \geq c \implies (a \wedge b) \vee c = a \wedge (b \vee c).$
\end{enumerate}
\end{definition}

We remark that a cover is an interval of length one. We summarize some well-known facts on modular lattices; the reader is referred to \cite[Chapter 4]{romanlattices} for the details.

\begin{theorem}\label{th:modlatt}
   Let $\mL$ be a (finite) modular lattice and let $x,y,z \in \mL$. The following hold.
   \begin{enumerate}
       \item[\namedlabel{weakcovering}{{\rm (1)}}] If $x \lessdot y$, then $x \vee z \leq y \vee z$ and 
       $x \wedge z \leq y \wedge z$.
       \item[\namedlabel{pushupdown}{{\rm (2)}}] We have that $x \wedge y \lessdot x$ if and only if $y \lessdot x \vee y$.
       \item[\namedlabel{JD}{{\rm (3)}}] All maximal chains from $x$ to $y$ have the same length (i.e. $\mL$ satisfies the Jordan-Dedekind property).
       \item[\namedlabel{modheight}{{\rm (4)}}] We have that $\h(a \vee b)+\h(a \wedge b) = \h(a)+\h(b)$.
   \end{enumerate}
\end{theorem}

Equivalent to Theorem \ref{th:modlatt} \ref{pushupdown}, $[x\wedge y,y]$ is said to \emph{project up} to $[x,x\vee y]$ and $[x,x\vee y]$ \emph{projects down} to $[x\wedge y,y]$.
We then write $[x\wedge y,y] \myup [x,x\vee y]$ and 
$[x,x\vee y] \mydn [x\wedge y,y]$.
We will use Theorem \ref{th:modlatt} \ref{pushupdown} repeatedly throughout this paper. 

Lemma \ref{lem:cover_or_eq} was shown in \cite[Proposition 2]{BCJ17}. We provide a proof for the convenience of the reader.
\begin{lemma}\label{lem:cover_or_eq}
Let $\mathcal{L}$ be a modular lattice and let $x,y,z\in \mathcal{L}$ such that $[x,y]$ is a cover in $\mathcal{L}$. Exactly one of the following holds:
\begin{enumerate}
    \item $x\vee z\lessdot y\vee z \textup{ and } y\wedge z=x\wedge z$, or
    \item $y\vee z=x\vee z \textup{ and } x\wedge z\lessdot y\wedge z.$
\end{enumerate}
\end{lemma}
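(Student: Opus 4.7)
The plan is to reduce everything to one easy dichotomy produced by the modular law. Since $x \leq y$, the modular law applied to $y$, $z$, $x$ gives
\[
y \wedge (x \vee z) = x \vee (y \wedge z);
\]
denote this common value by $w$. From the two expressions we read off $x \leq w \leq y$, and because $[x,y]$ is a cover we must have $w = x$ or $w = y$. I expect these two alternatives to correspond exactly to conclusions (1) and (2) of the lemma.

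In the case $w = x$, the identity $x \vee (y \wedge z) = x$ forces $y \wedge z \leq x$, whence $y \wedge z \leq x \wedge z$; combined with the trivial $x \wedge z \leq y \wedge z$ this gives $x \wedge z = y \wedge z$. Simultaneously, $y \wedge (x \vee z) = x$, so applying (\ref{eq:projupdown}) with the pair $(x \vee z, y)$ (and noting that $(x \vee z) \vee y = y \vee z$ since $x \leq y$) promotes the cover $[x,y] = [y \wedge (x \vee z), y]$ to the cover $[x \vee z, y \vee z]$. This is conclusion (1). Dually, in the case $w = y$, the identity $y \wedge (x \vee z) = y$ forces $y \leq x \vee z$ and hence $x \vee z = y \vee z$; while $x \vee (y \wedge z) = y$, together with $(y \wedge z) \wedge x = x \wedge z$, allows a symmetric application of (\ref{eq:projupdown}) with the pair $(x, y \wedge z)$ to promote $[x,y]$ to the cover $[x \wedge z, y \wedge z]$. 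This is conclusion (2).

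Exclusivity is automatic: conclusion (1) asserts $x \vee z \lessdot y \vee z$ in particular strictly, while conclusion (2) asserts $x \vee z = y \vee z$, and the two cannot simultaneously hold. The main potential stumbling block is bookkeeping, namely that (\ref{eq:projupdown}) be invoked with exactly the right pair of elements in each case so that the meet/join simplifications (all of which rely on $x \leq y$) produce the intervals appearing in the statement. Once the modular-law identity $y \wedge (x \vee z) = x \vee (y \wedge z)$ is written down, the rest of the argument is essentially a direct reading-off from the two possible values of $w$. As a consistency check, Lemma \ref{prop:lattice_one_or_other} rules out the ``coincident'' possibility that $w$ equals both $x$ and $y$, matching the strict dichotomy obtained above.
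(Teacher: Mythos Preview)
Your argument is correct. The modular identity $y\wedge(x\vee z)=x\vee(y\wedge z)=:w$ lands in $[x,y]$, and the two possible values $w=x$ and $w=y$ yield exactly the two alternatives via a clean application of (\ref{eq:projupdown}); exclusivity follows immediately since the two alternatives make contradictory claims about $x\vee z$ versus $y\vee z$. One small remark: your closing invocation of Lemma~\ref{prop:lattice_one_or_other} is redundant, since $x\neq y$ already prevents $w$ from equalling both.

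As for comparison with the paper: the paper does not prove this lemma itself but cites \cite[Proposition~2]{BCJ17}, noting that Lemma~\ref{prop:lattice_one_or_other} is used in that external proof. Your proof is therefore a self-contained alternative that bypasses Lemma~\ref{prop:lattice_one_or_other} entirely (except as an optional sanity check), relying only on the modular law and the projection equivalence~(\ref{eq:projupdown}). This is arguably cleaner than routing through the auxiliary ``not both'' lemma, since the dichotomy $w\in\{x,y\}$ arises directly from the cover hypothesis rather than from a separate impossibility argument.
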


\begin{proof}
   By Theorem \ref{th:modlatt}~\ref{modheight}, we have that
   $\h(y \vee z) - \h(x \vee z) = \h(y)-\h(x) -\h(y \wedge z) + \h(x \wedge z)$. By \ref{JD} of the same theorem, for any $a,b \in \mL$, $1 = \h(b)-\h(a)$ if and only if $a \lessdot b$. Therefore,
   $\h(y \vee z) - h(x \vee z) = 1 -\h(y \wedge z) + \h(x \wedge z)$. It follows that either 
   $x \vee z \lessdot y \vee z$ and $x \wedge z = y \wedge z$ or $x \vee z = y \vee z$ and $x \wedge z \lessdot y \wedge z$. 
\end{proof}

\begin{notation}
  Throughout this paper, $\mL$ will denote a modular lattice ordered under $\leq$ with meet $\wedge$ and join $\vee$. We will write $\mC$ to denote the set of covers in $\mL$. We write $\textbf{0}:=0_\mL$ and $\textbf{1}:=1_\mL$. A complement of $v\in\mL$ will be denoted by $v^c$. We will let $V$ denote an $\F_q$-vector space of dimension $n$. We will write $\mL(V)$ to denote the lattice of subspaces of $V$, ordered by inclusion. 
\end{notation}

For the objects under consideration in this paper, one of the most relevant lattices will be the lattice of subspaces the finite-dimensional vector space $V$.
The lattice $\mL(V)$ has meet operation given by intersection and has join operation given by the vector space sum. The height of an element in this lattice is its dimension as a subspace of $V$. For $x\leq y$, the length of the interval $[x,y]$ is the dimension of the quotient space $y/x$ and $[x,y]$ is a cover if $x$ has codimension 1 in $y$.
If $S$ is a finite set, we write $\mL(S)$ to denote the Boolean lattice with greatest element $S$. The height of an element of $\mL(S)$ is given by its cardinality and for $x\leq y$ the length of the interval $[x,y] \subseteq \mL(S)$ is $|x - y|$.

There are numerous ways to define a $q$-matroid, \cite{BCJ_JCTB,JP18}. We will first define the concept of a $q$-matroid as a $\{0,1\}$-{\em weighting} of a lattice. This is essentially the same concept as a {\em bicolouring}, as described in \cite{BCJ17}.

\begin{definition}\label{def:lat_q-matroid}
A $\{0,1\}$-weighting of $\mathcal{L}$ 
is a map $w:\mC\longrightarrow \{0, 1\}$. We say that $\w([a,b])$ is the {\em weight} of the cover $[a,b] \in \mC$. 
For any maximal chain 
$ x_0\lessdot x_1\lessdot x_2\lessdot \ldots \lessdot x_k $, we define
the weight of the chain to be $\sum_{j=0}^{k-1} \w([x_i,x_{i+1}])$.
If $[a,b]$ is a diamond of $\mL$, we say that it has has type {\em full, empty, mixed} or {\em prime} with respect to $\w$ if it satisfies one of the following descriptions.
\begin{description}
    \item[Full:] $\w([u,v])=1$ for each $[u,v] \in \mC([a,b])$.
    \item[Empty:] $\w([u,v])=0$ for each $[u,v] \in \mC([a,b])$.
    \item[Mixed:] There exists $x \in [a,b],x\neq a,b$ such that 
    $\w([x,b])=1$ and  $\w([a,x])=0$, while 
    $\w([y,b])=0$, and $\w([a,y])=1$ for every $y \in [a,b],y \neq x,a,b$.
    \item[Prime:] $\w([a,x])=1$ and $\w([x,b])=0$ for any $x\in[a,b],x\neq a,b$.
\end{description}
\end{definition}

\begin{figure}[h]
    \centering
    \begin{tabular}{c|c|c|c}
    \hline &&&\\
      \includegraphics[scale=0.82]{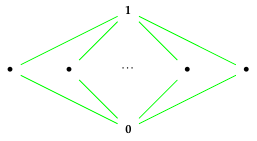}&
      \includegraphics[scale=0.82]{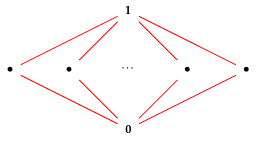}&
       \includegraphics[scale=0.82]{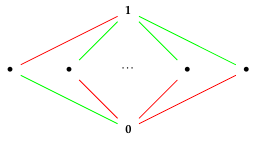}&
        \includegraphics[scale=0.82]{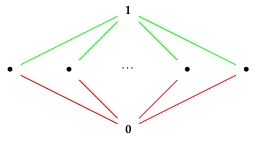}\\
        empty & full & mixed & prime\\
        &&&\\
        \hline
    \end{tabular}
    \caption{The four matroidal diamonds.}
    \label{fig:matroidaldiamonds}
\end{figure}

\begin{definition}\label{def:matweighting}
   A $\{0,1\}$-weighting $\w$ of $\mathcal{L}$ is called {\em matroidal} if each diamond in 
   $\mathcal{L}$ is of type full, empty, mixed, or prime.
\end{definition}

The following lemma can be  shown by applying an inductive argument to Definition \ref{def:matweighting}.
\begin{lemma}[\cite{BCJ17}]\label{lem:green_red_proj}
Let
$[a,b],[c,d]$ be covers in $\mL$, endowed with a matroidal $\{0,1\}$-weighting. The following hold:
\begin{enumerate}
    \item If $\w([a,b])=0$ and $[a,b] \myup [c,d]$ then $\w([c,d])=0$.
    \item If $\w([a,b])=1$ and $[a,b] \mydn [c,d]$ then $\w([c,d])=1$.
\end{enumerate}
\end{lemma}

\begin{corollary}
    Let $\mathcal{L}$ be a modular lattice and let $x,y,z\in \mathcal{L}$ such that $[x,y]$ is a cover in $\mathcal{L}$. Then the following hold.
    \begin{enumerate}
    \item If $y\wedge z=x\wedge z$ and $\w([x,y])=0$ then $\w([x\vee z,y\vee z]) = 0$.
    \item If $y\vee z=x\vee z$ and $\w([x,y])=1$ then $\w([x\wedge z,y\wedge z]) = 1$.
\end{enumerate}
\end{corollary}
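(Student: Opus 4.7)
The plan is to combine the dichotomy in Lemma~\ref{lem:cover_or_eq} with the projection principle in Lemma~\ref{lem:green_red_proj}. In each case, the first hypothesis of the corollary is exactly the conclusion of one of the two alternatives in Lemma~\ref{lem:cover_or_eq}, so the relevant target interval $[x\vee z, y\vee z]$ (in part 1) or $[x\wedge z, y\wedge z]$ (in part 2) is immediately known to be a cover, hence its weight is well-defined.

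For part (1), I would check that $[x,y]\uparrow[x\vee z,y\vee z]$, after which Lemma~\ref{lem:green_red_proj}(1) concludes the argument. To exhibit this projection in the form $[a\wedge b,b]\uparrow[a,a\vee b]$ guaranteed by~(\ref{eq:projupdown}), I set $a:=x\vee z$ and $b:=y$, so that $a\vee b=(x\vee z)\vee y=y\vee z$ (using $x\le y$), and it remains to verify that $a\wedge b=x$. This is the one real calculation: since $x\le y$, modularity gives
\[
(x\vee z)\wedge y \;=\; x\vee(z\wedge y) \;=\; x\vee(z\wedge x) \;=\; x,
\]
where the middle equality uses the hypothesis $y\wedge z=x\wedge z$.

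For part (2), the strategy is dual: show $[x,y]\downarrow[x\wedge z,y\wedge z]$ and apply Lemma~\ref{lem:green_red_proj}(2). Putting $u:=x$ and $v:=y\wedge z$ in the template $[u,u\vee v]\downarrow[u\wedge v,v]$ yields $[u\wedge v,v]=[x\wedge z,y\wedge z]$ using $x\le y$, so the claim reduces to $x\vee(y\wedge z)=y$. Again by modularity and the hypothesis $x\vee z=y\vee z$,
\[
x\vee(y\wedge z) \;=\; y\wedge(x\vee z) \;=\; y\wedge(y\vee z) \;=\; y.
\]

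I expect no serious obstacle: the corollary is essentially a packaging of Lemma~\ref{lem:cover_or_eq} (which supplies the covering relation on the image) with Lemma~\ref{lem:green_red_proj} (which transports the weight along projections), with only two short modular-law identities to carry out. The only subtle point is recognising that the two hypotheses in each part exactly pick out the right alternative of Lemma~\ref{lem:cover_or_eq}, which is needed to ensure that the image interval is a cover before speaking of its weight.
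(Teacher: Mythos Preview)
Your proposal is correct and matches the paper's implicit approach: the corollary is stated in the paper without proof, immediately after Lemmas~\ref{lem:green_red_proj} and~\ref{lem:cover_or_eq}, precisely because it is the obvious combination of those two results that you have written out. Your modular-law verifications that $[x,y]\uparrow[x\vee z,y\vee z]$ in part~(1) and $[x,y]\downarrow[x\wedge z,y\wedge z]$ in part~(2) are exactly the details one needs to make the argument explicit.
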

\begin{proof}
    The result immediately follows from Lemma~\ref{lem:cover_or_eq} and Lemma~\ref{lem:green_red_proj}.
\end{proof}

\begin{definition}\label{def:rank_q-matroid}
Let $r:\mathcal{L}\rightarrow\mathbb{Z}_{\geq0}$ be a function. We define the following {\em rank axioms}.
\begin{enumerate}
    \item[(R1)] $0\leq r(x)\leq \h(x)$ for all $x\in\mathcal{L}$.
    \item[(R2)] For all $x,y\in\mathcal{L}$, $r(x)\leq r(y)$ whenever $x\leq y$.
    \item[(R3)] For all $x,y\in\mathcal{L}$, $r(x \vee y)+r(x \wedge y)\leq r(x)+r(y)$.
\end{enumerate}
We say that a rank function is {\em matroidal} if it satisfies the rank axioms (R1)-(R3).
\end{definition}

\begin{definition}
  Let $\mL$ be a lattice and $r:\mathcal{L}\rightarrow\mathbb{Z}_{\geq0}$ be a matroidal rank function. 
  The lattice $\mL$ is called the \emph{support lattice} of $M=(\mL,r)$.
  The elements of $\mathcal{L}$ are called the \emph{support elements} of $M$.
\end{definition}

Note that if $\mL$ is the Boolean lattice $\mL(S)$ for a finite set $S$ then $(\mL(S),r)$ is a matroid, while if $\mL=\mL(V)$, then $(\mL(V),r)$ is a $q$-matroid. 

\begin{definition}
Let $M=(\mL,r)$ be a matroid or a $q$-matroid. Let $a,b \in \mL$ such that $a\leq b$.
The function $r_{[a,b]}:[a,b]\rightarrow\mathbb{Z}$ is defined by $r_{[a,b]}(x)=r(x)-r(a)$ for all $x \in [a,b]$. 
The function $r_{[a,b]}$ is a matroidal rank function on $[a,b]\subseteq\mL$ and $([a,b],r_{[a,b]})$
is called a \emph{minor} of $M$.
\end{definition}

\begin{notation}
  Let $M=(\mL,r)$ be a matroid or $q$-matroid and let $a,b\in\mL$ with $a\leq b$. 
  For $[a,b]\subseteq\mL$ we write $M([a,b])$ to denote the minor $([a,b],r_{[a,b]})$.
  We will also define
  $M(a):=M([\zero,a])$, and define 
  $M/a:=M([a,\one])$. If $M$ is a matroid, we use $M-a$ to denote the restriction of $M$ to $[{\bf0},a^c]\subseteq\mL$.
\end{notation}

\begin{definition}
Let $r$ be a matroidal rank function on a support lattice $\mathcal{L}$. For any $x\in\mathcal{L}$, the \emph{nullity} of $x$ is
defined to be $\nu(x)=\h(x)-r(x)$. For an interval $[a,b]\subseteq\mathcal{L}$ 
the function $\nu_{[a,b]}:[a,b]\rightarrow\mathbb{Z}$ 
is defined by $\nu_{[a,b]}(x)=\nu(x)-\nu(a)$ for all $x\in[a,b]$.
\end{definition}

\begin{definition}
Let $M=(\mL,r)$ be a matroid or a $q$-matroid and let $x\in\mL$. If $\h(x)=1$ and $r(x)=0$, then $x$ is called a \emph{loop} of $M$. 
If $x$ is a coatom of $\mL$ such that $r(x)=r(M)-1$, then $x$ is called a \emph{coloop} of $M$. If there is an atom $e\in\mathcal{L}$ such that $e\leq y$ for any $y\in\mathcal{L}$ such that $r(y)=r(\one)$, then $e$ is called an \emph{isthmus} of $M$.
\end{definition}

\begin{remark}
     In the case of a matroid, a coloop is the dual of an isthmus in the dual matroid. However, this connection does not hold for $q$-matroids. Moreover, as shown in \cite[Theorem 5.4]{JPR22}, no $q$-matroid other than one whose rank equals its height contains an isthmus. Therefore, while the notion of an isthmus is frequently used in the context of the Tutte polynomial in matroid theory, we do not use it in $q$-matroid theory; instead we use the notion of a coloop. 
\end{remark}

A cryptomorphism between $q$-matroids defined via a rank function or via a $\{0,1\}$-weighting on the subspace lattice was shown in \cite[Theorem 2]{BCJ17}. It's easy to see that
the same result holds for matroids. We hence have the following statement.

\begin{theorem}[\cite{BCJ17}]
{\bf{~}}
   \begin{enumerate}
   \item
      Let $r:\mathcal{L}\rightarrow\mathbb{Z}_{\geq0}$ be a matroidal rank function. 
      Define a map $\w_r$ on the set of covers $\mC$ of $\mL$ by $\w_r([a,b]): = r(b)-r(a)$
      for all $[a,b] \in \mC$. Then $\w_r$ is a matroidal $\{0,1\}$-weighting of $\mL$.
   \item
      Let $\w:\mC \longrightarrow \{0,1\}$ be a matroidal $\{0,1\}$-weighting.
      Define a map $r_{\w}:\mathcal{L}\rightarrow\mathbb{Z}_{\geq 0}$ by
      $r_{\w}(x) := \sum_{i=1}^{t} \w([x_{i-1},x_i])$, where 
      ${\bf0}=x_0\lessdot x_1 \lessdot \cdots \lessdot x_{t}=x$ is a maximal chain in $[{\bf0},x]$. Then $r_{\w}$ is a matroidal rank function.
   \item
      Furthermore, $\w_{r_{\w}} = \w$ and $r_{\w_r}=r$.
   \end{enumerate}
\end{theorem}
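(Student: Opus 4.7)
The plan is to prove the three items in turn. The main work is in item (1), where one must verify the four-way classification of diamonds, and in item (2), where submodularity is the nontrivial obstacle. Item (3) is then a quick bookkeeping check.

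For item (1), I would first verify that $\w_r([a,b]) \in \{0,1\}$ for every cover $[a,b]$. Monotonicity (R2) forces $\w_r([a,b]) \geq 0$. For the upper bound, I would select an atom $p \in \mL$ satisfying $p \not\leq a$ and $a \vee p = b$ (available in the Boolean and subspace settings, since $b$ is a join of atoms and $[a,b]$ is a cover); modularity then forces $a \wedge p = \zero$, and submodularity (R3) yields $r(b) \leq r(a) + r(p) \leq r(a) + 1$. For the diamond classification, fix a diamond $[a,b]$ and observe that $r(b)-r(a) \in \{0,1,2\}$ by iterating this bound. Chain additivity $r(b)-r(a) = \w_r([a,x]) + \w_r([x,b])$ for $x \in (a,b)$ immediately handles $r(b)-r(a) = 0$ (\emph{empty}) and $r(b)-r(a) = 2$ (\emph{full}). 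In the remaining case $r(b)-r(a) = 1$, any two distinct $x_1, x_2 \in (a,b)$ satisfy $x_1 \wedge x_2 = a$ and $x_1 \vee x_2 = b$, so (R3) forces $r(x_1) + r(x_2) \geq 2r(a) + 1$; hence at most one $x$ in the diamond has $r(x) = r(a)$, giving \emph{prime} if no such $x$ exists and \emph{mixed} otherwise.

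For item (2), the first task is to show that $r_\w(x)$ is independent of the chosen maximal chain from $\zero$ to $x$. Since any two maximal chains are related by a sequence of local swaps through diamonds, it suffices to observe that across each of the four diamond types the total weight along both sides of the diamond coincides (namely $2$, $0$, $1$, and $1$ for full, empty, prime, and mixed respectively). Axiom (R1) then follows from $\w \in \{0,1\}$ together with the fact that every maximal chain from $\zero$ to $x$ has length $h(x)$, while (R2) follows by extending a maximal chain from $\zero$ to $x$ to one from $\zero$ to $y$. The chief obstacle is (R3). Here I would fix a maximal chain $x \wedge y = z_0 \lessdot z_1 \lessdot \cdots \lessdot z_t = x$ and consider the induced sequence $y = z_0 \vee y \leq z_1 \vee y \leq \cdots \leq z_t \vee y = x \vee y$. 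Lemma~\ref{lem:cover_or_eq} guarantees that for each $i$ either $[z_i, z_{i+1}] \uparrow [z_i \vee y, z_{i+1} \vee y]$ or $z_i \vee y = z_{i+1} \vee y$; retaining only the genuine covers produces a maximal chain from $y$ to $x \vee y$, and Lemma~\ref{lem:green_red_proj}(1) bounds the weight of each such projected cover by $\w([z_i, z_{i+1}])$. Summing over $i$ gives $r_\w(x \vee y) - r_\w(y) \leq r_\w(x) - r_\w(x \wedge y)$, which rearranges to (R3).

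Item (3) follows directly from the definitions. For $\w_{r_\w} = \w$, extend any maximal chain from $\zero$ to $a$ by the cover $[a,b]$ to a maximal chain from $\zero$ to $b$; the telescoping sum yields $r_\w(b) - r_\w(a) = \w([a,b])$. For $r_{\w_r} = r$, a maximal chain $\zero = x_0 \lessdot \cdots \lessdot x_t = x$ gives $\sum_i \w_r([x_{i-1}, x_i]) = \sum_i (r(x_i) - r(x_{i-1})) = r(x) - r(\zero) = r(x)$, using $r(\zero) = 0$ from (R1). The principal difficulty in the whole proof is the submodularity step in item (2), where the compatibility between upward projection of covers and the weight-lowering statement of Lemma~\ref{lem:green_red_proj} is the essential ingredient.
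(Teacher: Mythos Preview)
Your proof is correct. The paper itself does not prove this theorem; it merely cites it as \cite[Theorem 2]{BCJ17} and remarks that the same argument works for matroids, so there is no in-paper proof to compare against. Your argument follows the standard route one would expect for this cryptomorphism: the diamond classification via submodularity for (1), the Jordan--H\"older diamond-swap argument for well-definedness in (2), and the projection bound from Lemma~\ref{lem:green_red_proj} for (R3). One small remark: your use of an atom $p$ with $a\vee p=b$ in item (1) relies on $\mL$ being atomistic, which you correctly flag as holding in the Boolean and subspace settings the paper restricts to; for a general modular lattice the statement as written would need an extra hypothesis, but that is outside the scope of this paper.
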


\begin{definition}
Let $r$ be a matroidal rank function with support lattice $\mL$. 
 \begin{enumerate}
     \item For any $v\in\mL$, we define the \emph{closure} of $v$ with respect to $r$ to be $\bigvee\{e\in\mL:\h(e)=1, r(v\vee e)=r(v)\}$, and we denote it by $\textup{cl}(v)$.
     \item For any $v \in \mL$, we say that $v$ is {\em independent} with respect to $r$ if 
     $\h(v)=r(v)$. Otherwise we way that $v$ is {\em dependent}.
     \item For any $v \in \mL$, we say that $v$ is a {\em basis} with respect to $r$ if 
     $v$ is independent and $r(v)=r(\one)$.
 \end{enumerate}

\end{definition}

\begin{definition}\label{def:lat-equiv}
Let $r_1$ and $r_2$ be a pair of matroidal rank functions with support lattices $\mL_1$ and $\mL_2$, respectively. We say that $r_1$ and $r_2$ are \emph{lattice-equivalent} if there exists a lattice isomorphism $\varphi:\mL_1\longrightarrow \mL_2$ such that $r_1(x)=r_2(\varphi(x))$ for all $x\in \mL_1$. In this case we write $M_1 \cong M_2$, where
$M_1=(\mL_1,r_1)$ and $M_2=(\mL_2,r_2)$.
\end{definition}

\begin{definition}
Let $M=(\mL,r)$ be a matroid or $q$-matroid and let $\w$ be the corresponding matroidal $\{0,1\}$-weighting of $\mL$. 
Let $\varphi$ be a lattice anti-isomorphism of $\mL$.
We write $r^*$ to denote
the matroidal rank function on $\varphi(\mL)$, which is defined by
\[ 
  r^*(\varphi(x)):=\h(\one)-\h(x)-(r(\one)-r(x)),
\]  
for all $x\in \mL$. 
We write $\w^*$ to denote the corresponding matroidal $\{0,1\}$-weighting of $\varphi(\mL)$. The \emph{dual} of $M$ is the matroid or $q$-matroid $(\varphi(\mL),r^*)$ and is denoted by $M^*$.

\end{definition}

It is straightforward to check that if $\w$ is a matroidal $\{0,1\}$-weighting of $\mL$ then its dual satisfies $\w^*([\varphi(y),\varphi(x)])=1-\w([x,y])$ for every cover $[x,y] \in \mC$. Observe also that $r^*(\varphi(x)) = \nu(\one)-\nu(x)$ for all $x \in \mL$.

\begin{remark}
  The definition of the dual of a matroidal rank function $r$ on $\mL$
  depends on the choice of anti-isomorphism $\varphi$ on $\mL$. However, it is clear that all such duals are lattice-equivalent.
\end{remark}

\begin{remark}
If $S$ is a finite set then there is a unique anti-automorphism induced by complementing, namely, $\varphi:\mL(S) \longrightarrow \mL(S): x \mapsto x^c$. For $\mL=\mL(V)$, any non-degenerate bilinear form on $V$ yields an anti-automorphism of $\mL(V)$ by the map that assigns $x \in \mL(V)$ to its orthogonal complement with respect to that bilinear form.  
\end{remark}

In several instances throughout this paper, it will be convenient to use the notion of duality typically used in matroid theory. 
 We recall the following lemma (c.f. \cite{byrne2019assmus}, \cite{JP18}, and \cite{oxley}). 

\begin{lemma}
    Let $\varphi:\mL \longrightarrow \mL$ be a lattice anti-automorphism. For each $a \in \mL$,
    let $\phi_a$ be an anti-automorphism of the interval $[0,\varphi(a)]$.
    Then $\phi_a \circ \varphi : [a,1] \longrightarrow [0,\varphi(a)]$ is a lattice isomorphism.
    If $r$ is a matroidal rank function with support lattice $\mL$, then
    \[
       M([a,1])^* \cong M^*([0,\varphi(a)]) \text{ and }
       M^*([a,1]) \cong M([0,\varphi(a)])^*.
    \]
\end{lemma}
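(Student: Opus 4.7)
The plan is to handle the three claims in turn. For the lattice isomorphism, since $\varphi$ is an order-reversing bijection of $\mL$ that interchanges meet and join, its restriction to $[a,\one]$ is a bijection onto $[\varphi(\one),\varphi(a)]=[\zero,\varphi(a)]$ that preserves these properties. Thus $\varphi|_{[a,\one]}$ is a lattice anti-isomorphism of intervals, and composing with the anti-automorphism $\phi_a$ of $[\zero,\varphi(a)]$ yields $\phi_a\circ\varphi:[a,\one]\longrightarrow[\zero,\varphi(a)]$ as the composition of two anti-isomorphisms, which is a lattice isomorphism.

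For the two duality statements, the strategy is to compute both rank functions on a common identification of the support lattices, exploiting the fact (flagged in the remark) that the dual is defined only up to lattice-equivalence via the choice of anti-isomorphism. For $M([a,\one])^*\cong M^*([\zero,\varphi(a)])$, I would use $\varphi|_{[a,\one]}$ as the anti-isomorphism defining $M([a,\one])^*$, placing its support lattice on $[\zero,\varphi(a)]$. Plugging the minor's height and rank (shifted by $h(a)$ and $r(a)$) into the dual formula, the shifts cancel and one obtains $r_{[a,\one]}^*(\varphi(x))=h(\one)-h(x)-r(\one)+r(x)$. On the other side, $r^*(\zero)=r^*(\varphi(\one))=0$, so $r^*_{[\zero,\varphi(a)]}(\varphi(x))=r^*(\varphi(x))$, which equals the same expression by the defining formula for $r^*$.

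For $M^*([a,\one])\cong M([\zero,\varphi(a)])^*$, I would take $\phi_a\circ\varphi$ as the lattice isomorphism and $\phi_a$ as the anti-automorphism defining $M([\zero,\varphi(a)])^*$. The dual-of-minor formula inside $[\zero,\varphi(a)]$ yields $r^*_{[\zero,\varphi(a)]}(\phi_a(\varphi(x)))=h(\varphi(a))-h(\varphi(x))-r(\varphi(a))+r(\varphi(x))$. On the left, $r^*_{[a,\one]}(x)=r^*(x)-r^*(a)$; by the defining formula for $r^*$, using an anti-automorphism of $\mL$ chosen to match $\varphi$ (this is where lattice-equivalence supplies the needed flexibility, and for the standard anti-automorphisms on $\mL(S)$ and $\mL(V)$ recalled in the remark no such choice is required since they are involutive), the differences telescope to the same expression, establishing the equivalence.

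The main obstacle I expect is the careful bookkeeping across four different support sub-intervals, since heights and ranks must each be interpreted relative to the correct interval and the anti-automorphism used for each dual must be tracked. The key unifying observation is that ``dual'' is defined only up to lattice-equivalence, so one can freely pick a convenient anti-isomorphism at each stage; once this freedom is invoked the verification reduces to routine cancellation of the $h(a)$ and $r(a)$ terms.
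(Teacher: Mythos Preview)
The paper does not prove this lemma; it is merely recalled with citations to \cite{byrne2019assmus}, \cite{JP18}, and \cite{oxley}, so there is no in-paper argument to compare against.

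Your plan is correct. The lattice-isomorphism claim is immediate as you say, and for $M([a,\one])^*\cong M^*([\zero,\varphi(a)])$ your choice of $\varphi|_{[a,\one]}$ as the defining anti-isomorphism makes the verification a one-line cancellation of the $h(a)$ and $r(a)$ shifts, exactly as you outline. For the second duality statement your caveat is precisely the point: if $M^*$ is built from $\varphi$ then $r^*(y)=h(y)-r(\one)+r(\varphi^{-1}(y))$, whereas the right-hand side produces $h(x)-h(a)-r(\varphi(a))+r(\varphi(x))$, and these agree only if $\varphi$ is involutive or if one redefines $M^*$ via $\varphi^{-1}$, which the lattice-equivalence remark permits. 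A cleaner alternative that avoids this bookkeeping entirely is to derive the second statement from the first by applying it with $M$ replaced by $M^*$ and then invoking $(M^*)^*\cong M$.
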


For the remainder of this paper, unless explicitly stated otherwise, $r$ will denote a fixed but arbitrary matroidal rank function on $\mL$. We will fix $M:=(\mL,r)$, so $M$ will be understood to be a matroid if $\mL=\mL(S)$ for the set $S$ of cardinality $n$ and will be understood to be a $q$-matroid in $\mL=\mL(V)$ for the $n$-dimensional $\Fq$-subspace $V$. In respect of this approach, where formulas involving $q$-binomial coefficients and $q$-factorial numbers arise, for the matroid case these should be interpreted as binomial coefficients and factorials, respectively, or those found by letting $q\rightarrow1$, as appropriate. It is our intention that this approach will result in a simpler and more unified presentation of the theory.



\section{Tutte partitions}\label{sec:q-Tutte_partition}

The purpose of this section is to provide a definition of the Tutte polynomial that can apply to both matroids and $q$-matroids alike. This definition relies on the existence of a type of partition of the support lattice of a matroid or $q$-matroid, which we call a \emph{Tutte partition}. 
The main result of this section, Theorem~\ref{thm:T-comp_partition_exist}, gives that a Tutte partition always exists on on the support lattice of a matroid or $q$-matroid, as long as the support lattice has a {\em proper interval partition} \cite{HITZEMANN20103551}. This provides a generalisation of the definition of the Tutte polynomial given in \cite{C69}, and follows \cite{BCJ17} in its use of \emph{prime-free} minors.

\begin{definition}
   We say that $M$ is {\em prime-free} if $M$ has no prime diamonds in $\mL$.
\end{definition}

\begin{definition}\label{def:Tutte_partition}
Let $\mathcal{P}$ be a partition of the elements of $\mathcal{L}$ such that every element of $\mP$ is an interval. We say that $\mathcal{P}$ is a \emph{Tutte partition} of $\mL$ if the following properties hold for every $[a,b]\in\mathcal{P}$:
\begin{enumerate}
    \item $r(a)=\h(a)$ (i.e. $a$ is independent),
    \item $r(b)=r(\one)$ (i.e. $b$ is spanning),
    \item $[a,b]$ is prime-free.
\end{enumerate}
\end{definition}

\begin{example}
    In Figure~\ref{fig:q-mat_lattice} we have an instance of a $q$-matroid that is not prime-free, as the interval $[0,\langle e_2,e_3\rangle]$ (among others) is a prime diamond. An example of a Tutte partition of this $q$-matroid is $$\{[0,\langle e_1,e_2\rangle],[\langle e_2+e_3\rangle,E],[\langle e_1+e_2+e_3\rangle,\langle e_1+e_3,e_2\rangle],[\langle e_3\rangle,\langle e_1+e_2,e_3\rangle],[\langle e_1+e_3\rangle,\langle e_1,e_3\rangle]\}.$$
    The subspace $\langle e_2,e_3\rangle$ is the only coloop in this $q$-matroid. There are no loops in this $q$-matroid. A diagram of this partition is given in Figure~\ref{fig:q-mat_lattice_part}.
\end{example}

\begin{figure}[h]
    \centering
    \includegraphics[scale=1]{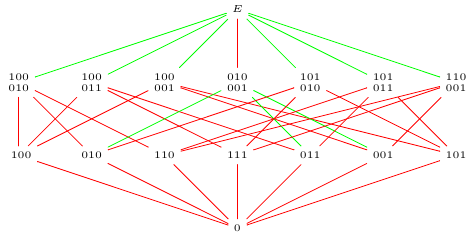}
    \caption{A $q$-matroid on $\mathbb{F}_2^3$. The notation $\begin{smallmatrix}100\\011\end{smallmatrix}$, for instance, denotes the subspace $\langle e_1,e_2+e_3\rangle$. The weight-1 covers have been coloured red, and the weight-0 covers have been coloured green.}
    \label{fig:q-mat_lattice}
\end{figure}

\begin{figure}[h!]
    \centering
    \includegraphics[scale=1]{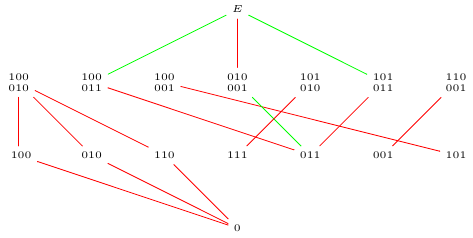}
    \caption{A Tutte partition of the $q$-matroid given in Figure~\ref{fig:q-mat_lattice}.}
    \label{fig:q-mat_lattice_part}
\end{figure}

Central to our strategy to define the Tutte polynomial of a $q$-matroid is the notion of a totally clopen element. Such an element only exists if $M$ is prime-free, in which case it determines the rank of every element of $M$. 

\begin{definition}
An element $z\in\mathcal{L}$  is called a {\em clopen} element of $M$ if for every $a\in \mL$ such that $z$ covers $a$ we have $\w([a,z]) = 0$ (i.e. $z$ is \emph{open}) and for every cover $b$ of $z$ we have $\w([z,b])=1$ (i.e. $z$ is \emph{closed}).
We say that $z$ is a \emph{totally clopen} element of $M$ if any cover $[x,y] \subset \mL$ satisfies $\w([x,y])=1$ if $x\geq z$, and $\w([x,y])=0$ if $y\leq z$.
\end{definition}

For example, the only clopen elements of the $q$-matroid in Figure~\ref{fig:q-mat_lattice} are $0$ and $\langle e_2,e_3\rangle$, and there is no totally clopen element in this $q$-matroid.

The following lemma follows immediately from the definition of a totally clopen element.

\begin{lemma}\label{lem:clopenfact}
   Let $z \in \mL$ be a totally clopen element of $M$. The following hold:
   \begin{enumerate}
       \item $r(z)=0$,
       \item $\nu(z)=\h(z)=\nu({\bf1})$,
       \item $\h(z)=\h(\one)-r(\one)$.
   \end{enumerate}
\end{lemma}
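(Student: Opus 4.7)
The plan is to exploit the cryptomorphism between the rank function $r$ and the matroidal weighting $\w$, namely that $r(x)$ equals the sum of the weights along any maximal chain from $\zero$ to $x$. The key observation is that the totally clopen hypothesis forces every cover in a maximal chain through $z$ to have a weight determined solely by its position relative to $z$: weight $0$ below $z$, weight $1$ above $z$. So I would fix, once and for all, a maximal chain
\[
\zero = x_0 \lessdot x_1 \lessdot \cdots \lessdot x_k = z \lessdot x_{k+1} \lessdot \cdots \lessdot x_n = \one,
\]
where $k = h(z)$ and $n = h(\one)$, and extract the three claims from it.

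For (1), I would note that each cover $[x_{i-1},x_i]$ with $i\leq k$ satisfies $x_i \leq z$, so by the totally clopen condition $\w([x_{i-1},x_i]) = 0$. Summing along this portion of the chain gives $r(z)=0$.

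For (3), I would use the second portion of the chain: each cover $[x_{i-1},x_i]$ with $i>k$ satisfies $x_{i-1}\geq z$, so $\w([x_{i-1},x_i])=1$ by the totally clopen condition. Since $r(\one)$ is the total weight along the whole chain and the bottom portion contributes $0$ by the calculation for (1), we get
\[
r(\one) \;=\; \sum_{i=k+1}^{n} \w([x_{i-1},x_i]) \;=\; n-k \;=\; h(\one)-h(z),
\]
which rearranges to $h(z) = h(\one) - r(\one)$.

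Finally, (2) is immediate from (1) and (3) combined with the definition of nullity: $\nu(z) = h(z) - r(z) = h(z)$ by (1), while $\nu(\one) = h(\one) - r(\one) = h(z)$ by (3). There is no real obstacle in this argument; the only thing to be a bit careful about is to invoke the well-definedness of the rank function as a chain-weight sum (guaranteed by the cryptomorphism theorem stated earlier) so that the choice of maximal chain through $z$ does not matter.
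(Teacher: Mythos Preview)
Your argument is correct and is precisely the intended one. The paper actually omits the proof of this lemma, treating it as an immediate consequence of the definition of a totally clopen element together with the rank--weighting cryptomorphism; your chain computation through $z$ is exactly the implicit reasoning.
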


The following is a modification of \cite[Proposition 3]{BCJ17} that incorporates the notion of a totally clopen element, which we assert is a necessary concept for the statement to hold.

\begin{lemma}\label{lem:clopen_properties}
Suppose that $z \in \mL$ is a totally clopen element of $M$. Then for any cover $[x,y]$ in $\mL$ we have:
\[
   \w([x,y]) = \left\{ 
              \begin{array}{cc}
                 0  & \text{if } x\wedge z \lessdot y\wedge z,\\
                 1  & \text{if } x\vee z \lessdot y\vee z.
              \end{array}
   \right.
\]
Furthermore, $z$ is the unique totally clopen element of $M$ and the complements of $z$ are the bases of $M$.
\end{lemma}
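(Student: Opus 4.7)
The plan is to prove the three assertions in turn: first the weight formula, then the characterisation of the bases as the complements of $z$, and finally uniqueness of $z$. Throughout, the central tools are Lemma~\ref{lem:cover_or_eq}, its corollary, and Lemma~\ref{lem:clopenfact}.

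For the weight formula, I would apply Lemma~\ref{lem:cover_or_eq} to the cover $[x,y]$ with respect to $z$. The two alternatives produced are exactly the two hypotheses in the stated formula, and are mutually exclusive and exhaustive. In the case $x\wedge z \lessdot y\wedge z$ (which forces $x\vee z = y\vee z$), total clopenness of $z$ gives $\w([x\wedge z, y\wedge z]) = 0$ directly, and if $\w([x,y])$ were $1$ then part~(2) of the Corollary to Lemma~\ref{lem:cover_or_eq} would force $\w([x\wedge z, y\wedge z]) = 1$, a contradiction. The case $x\vee z \lessdot y\vee z$ is symmetric: $x\vee z\geq z$ gives $\w([x\vee z, y\vee z]) = 1$ by total clopenness, and $\w([x,y]) = 0$ would contradict part~(1) of the corollary.

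For the basis--complement correspondence, let $w$ be a complement of $z$ and fix a maximal chain $\zero = w_0 \lessdot \cdots \lessdot w_k = w$. Since $w\wedge z = \zero$, every $w_i \wedge z = \zero$, so Lemma~\ref{lem:cover_or_eq} puts each step in the alternative $w_{i-1}\vee z \lessdot w_i\vee z$, and the weight formula gives $\w([w_{i-1},w_i]) = 1$; hence $r(w) = h(w) = k$. Extending this chain to $\one$, every step above $w$ satisfies $w_j\vee z = \one = w_{j+1}\vee z$ and so contributes weight $0$, which yields $r(w) = r(\one)$. Conversely, if $b$ is a basis with maximal chain $\zero = b_0 \lessdot \cdots \lessdot b_k = b$ (with $k = r(\one)$), then $\w([b_{i-1},b_i]) = 1$ at every step, so the formula forces the alternative $b_{i-1}\wedge z = b_i\wedge z$ and $b_{i-1}\vee z \lessdot b_i\vee z$. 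Iterating gives $b\wedge z = \zero$ and $h(b\vee z) = h(z) + k = h(\one)$ by Lemma~\ref{lem:clopenfact}(3), so $b\vee z = \one$.

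For uniqueness, suppose $z_1$ and $z_2$ are both totally clopen. Applying the weight formula with respect to $z_2$ to the cover $[\zero, a]$ for an atom $a$ shows that $\w([\zero, a]) = 0$ iff $\zero \lessdot a \wedge z_2$, i.e.\ iff $a\leq z_2$; by symmetry the same is equivalent to $a\leq z_1$. Hence $z_1$ and $z_2$ dominate the same atoms of $\mL$, and since $\mL = \mL(S)$ or $\mL(V)$ is atomistic, $z_1 = z_2$. The step I expect to require the most care is the weight formula: its proof routes the weight of $[x,y]$ through the weight of a projected cover, so one must align the hypotheses of Lemma~\ref{lem:cover_or_eq} and its corollary (phrased as equalities of joins and meets) with the hypotheses of the formula (phrased as covers between joins and meets); this alignment is where a sign slip would propagate into both the basis correspondence and the uniqueness argument.
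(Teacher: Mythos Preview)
Your proposal is correct. The weight formula and the forward direction of the basis--complement correspondence are argued essentially as in the paper (you route through the Corollary to Lemma~\ref{lem:cover_or_eq} where the paper invokes Lemma~\ref{lem:green_red_proj} and projections directly, which is the same content). Your converse for bases is a minor variant: you iterate the weight formula along a chain to obtain $b\wedge z=\zero$ and $b\vee z=\one$, while the paper uses $r(z)=0$ together with independence of $b$ to get $b\wedge z=\zero$, then a height count for $b\vee z=\one$.

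The genuine difference is the uniqueness argument. The paper assumes a second totally clopen $w\neq z$, uses $h(w)=h(z)$ to find an atom $a\leq z$ with $a\nleq w$, and produces a cover $[z\wedge w,(z\wedge w)\vee a]$ whose weight is forced to be both $0$ (since it lies below $z$) and $1$ (by projecting down from $[w,w\vee a]$), a contradiction. You instead read off from the weight formula that for any totally clopen $z'$ and any atom $a$ one has $\w([\zero,a])=0$ iff $a\leq z'$, so two totally clopen elements dominate the same atoms and hence coincide by atomisticity of $\mL(S)$ or $\mL(V)$. Your route is shorter and conceptually clean, at the cost of an explicit appeal to atomisticity; the paper's contradiction is a little more hands-on but is closer to working in a general modular lattice (though it, too, needs atoms to get started).
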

\begin{proof}
Let $[x,y]$ be a cover in $\mathcal{L}$. By Lemma~\ref{lem:cover_or_eq}, either $[x\wedge z, y\wedge z]$ is a cover, or $[x\vee z, y\vee z]$ is a cover. If $[x\vee z, y\vee z]$ is a cover, then it has weight $1$ since $x\vee z\geq z$. Moreover, it is clear that $[x\vee z, y\vee z]\mydn[x,y]$ in this case, and so $\w([x,y])=1$. Similarly, if $[x\wedge z, y\wedge z]$ is a cover, then it has weight $0$ since $y\wedge z\leq z$, and since $[x\wedge z, y\wedge z]\myup [x,y]$, we have $\w([x,y])=0$. 

We now show that $z$ is the unique totally clopen element of $M$. Suppose that $w \in \mL$ is another totally clopen element of $M$. Consider that $\h(z)=\h(w)=\nu(\one)$, so if $w\neq z$, then $z\wedge w\lneq z$ and $z\wedge w\lneq w$. Therefore, there exists an atom $a\leq  z$ such that $a \nleq w$. Consider the cover $c=[z\wedge w,(z\wedge w)\vee a]$. Since $z$ is totally clopen and $(z\wedge w)\vee a \leq z$, we have $\w(c)=0$. 
On the other hand, as $w$ is totally clopen, we have $\w([w,a\vee w])=1$ and since $[w,a\vee w]\mydn [z\wedge w,(z\wedge w)\vee a]$, by Lemma \ref{lem:green_red_proj} it follows that $\w(c)=1$, which yields a contradiction.

Finally, to show that the complements of $z$ are the bases of $M$, first let $z^c$ be a complement of $z$. Then $z\wedge z^c={\bf0}$ and so for any cover $[x,y]\subseteq [{\bf0},z^c]$, we have $x\wedge z={\bf0}=y\wedge z$, which means, by Lemma \ref{lem:cover_or_eq}, that $[x\vee z,y\vee z]$ is a cover, and so $[x,y]$ has weight $1$. In particular, every cover of a maximal chain in $[{\bf0},z^c]$ has weight $1$ and so $r(z^c)=\h(z^c)$, which means that $z^c$ is a basis of $M$. Conversely, let $b\in\mathcal{L}$ be a basis of $M$. Since $r(z)=0$, we have $z\wedge b={\bf0}$. From Lemma \ref{lem:clopenfact}, we have $\h(z)=\h(1)-r(1)=\h(1)-\h(b)$, and thus $\h(z\vee b)=\h(z)+\h(b)-\h(z\wedge b)=\h(z)+\h(b)=\h(1)$, which gives $z \vee b = \one$. It follows that $b$ is a complement of $z$.
\end{proof}

\begin{remark}
    Suppose that $z \in \mL$ is a totally clopen element of $M$. Then $z = \cl({\bf0})$. To see this, note the following.
    As $r(z)=0$ we have $z \leq \cl({\bf0})$.
    If $[x,y]$ is a cover in $\mL$ such that $x \geq \cl({\bf0})$ then as $z$ is totally clopen we have $\w([x,y])=1$. Clearly every cover in $[{\bf0},\cl({\bf0})]$ has weight 0 and so it follows that $\cl({\bf0})$ is totally clopen in $M$. By the uniqueness of $z$ established in Lemma~\ref{lem:clopen_properties}, we deduce that $z=\cl({\bf0})$.
    On the other hand, for arbitrary $M$, it is not the case that $\cl({\bf0})$ is necessarily totally clopen. In particular, in a prime diamond there is no totally clopen element (see the Figure \ref{fig:matroidaldiamonds}).
\end{remark}

We give a precise characterization of $M$ for which a totally clopen element exists.

\begin{theorem}\label{th:no_prime_from_clopen}
There exists a totally clopen element $z\in \mathcal{L}$ in $M$ if and only if $M$ is prime-free.
\end{theorem}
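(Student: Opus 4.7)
The plan is to prove both implications of Theorem~\ref{th:no_prime_from_clopen} separately. The forward direction should follow quickly from the structural characterization in Lemma~\ref{lem:clopen_properties} combined with a projection argument, while the reverse direction requires identifying $z := \cl(\zero)$ as the candidate totally clopen element and running a minimum-counterexample argument driven by prime-freeness.

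For the forward direction, assume $z\in\mL$ is totally clopen and suppose for contradiction that some diamond $[a,b]\subseteq\mL$ is prime. For every intermediate atom $x\in[a,b]$ with $x\neq a,b$, the cover $[a,x]$ has weight $1$ and $[x,b]$ has weight $0$, so Lemma~\ref{lem:clopen_properties} together with Lemma~\ref{lem:cover_or_eq} forces $a\wedge z = x\wedge z$, $x\vee z = b\vee z$, $a\vee z\lessdot x\vee z$, and $x\wedge z\lessdot b\wedge z$. In particular, $a\wedge z\lessdot b\wedge z$ is a cover lying entirely below $z$, hence of weight $0$. Applying the projection identity~\eqref{eq:projupdown} to obtain $[a\wedge z,\,b\wedge z]\uparrow[a,\,a\vee(b\wedge z)]$ and invoking Lemma~\ref{lem:green_red_proj}(1), I would then produce an atom $x^\ast := a\vee(b\wedge z)$ strictly between $a$ and $b$ with $\w([a,x^\ast])=0$, directly contradicting primality.

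For the reverse direction, suppose $M$ is prime-free and set $z:=\cl(\zero)$. First I would verify $r(z)=0$ by iterating axiom (R3) across the set of loops (atoms of rank $0$), whose join is $z$; this immediately gives $\w([x,y])=0$ whenever $y\leq z$. The main step is to prove that $\w([x,y])=1$ whenever $x\geq z$. I would argue by contradiction and take a counterexample $[x,y]$ with $h(x)$ minimal. If $x=z$, write $y=z\vee a'$ for some atom $a'\nleq z$; then $\w([z,y])=0$ forces $r(a')\leq r(y)=0$, making $a'$ a loop with $a'\leq\cl(\zero)=z$, a contradiction. Otherwise $x>z$, and I would pick $x'$ with $z\leq x'\lessdot x$. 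By minimality of $h(x)$, the cover $[x',x]$ has weight $1$. The diamond $[x',y]$ then already exhibits the ``$1$-then-$0$'' pattern through $x$, so by prime-freeness some other intermediate $x''\neq x$ must fail this pattern. A short rank calculation, using $r(y)=r(x)=r(x')+1$, shows that $\w([x'',y])=1$ would give $r(x'')=r(x')$ and hence $\w([x',x''])=0$, so in every sub-case we arrive at $\w([x',x''])=0$. This exhibits a counterexample $[x',x'']$ with $h(x')<h(x)$, contradicting minimality.

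The main obstacle is the reverse direction, and particularly the reduction step inside the inductive case: one must both guess the correct candidate $z=\cl(\zero)$ and recognize that prime-freeness is precisely the hypothesis that allows a minimal counterexample inside a diamond to be replaced by a strictly smaller one. The forward direction, by contrast, is a relatively direct diamond-level computation once Lemma~\ref{lem:clopen_properties} is in hand.
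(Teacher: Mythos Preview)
Your proposal is correct. Both directions go through as you describe, and the key steps you worry about (existence of $x'$ with $z\le x'\lessdot x$, the decomposition $y=z\vee a'$ in the base case, and the rank computation in the diamond) are all valid in the Boolean and subspace lattices at hand.

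Your route differs from the paper's in a couple of places worth noting. In the forward direction, the paper also starts from Lemma~\ref{lem:clopen_properties} and Lemma~\ref{lem:cover_or_eq}, but instead of directly exhibiting the intermediate element $x^\ast=a\vee(b\wedge z)$ with $\w([a,x^\ast])=0$, it pushes two projections (one up, one down) through modularity to conclude $d_b=d_t$; your argument is shorter because it names the offending atom explicitly. In the reverse direction, the paper does not take $z=\cl(\zero)$ as its candidate; instead it builds a maximal chain of weight-$0$ covers $\zero=x_0\lessdot\cdots\lessdot x_k$ and sets $z:=x_k$, using maximality to get that every cover $[x_k,\cdot]$ has weight $1$, and then asserts (tersely) that any remaining weight-$0$ cover in $[x_k,\one]$ would sit in a prime diamond. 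Your minimum-counterexample argument is effectively what the paper is sketching in that last sentence, carried out in full; your choice of $z=\cl(\zero)$ has the small advantage that $r(z)=0$ (and hence the ``below $z$'' half of totally clopen) is immediate, whereas for $x_k$ one appeals to the weight of the chosen chain.
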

\begin{proof}
Let $z\in \mathcal{L}$ be totally clopen. Assume that there exists a prime diamond $D=[d_b,d_t]$ in $\mathcal{L}$. By Lemma~\ref{lem:clopen_properties} and Lemma~\ref{lem:cover_or_eq}, for any cover $[d_b,x]\subset D$, $d_b\wedge z=x\wedge z$. Similarly, for any cover $[x,d_t]\subset D$, $x\vee z=d_t\vee z$. By Lemma~\ref{lem:green_red_proj} %
this means that $[d_b\vee z,x\vee z]=[d_b\vee z,d_t\vee z]$ is a weight-$1$ cover and $[x\wedge z,d_t\wedge z]=[d_b\wedge z,d_t\wedge z]$ is a weight-$0$ cover. By the modular law on $\mathcal{L}$ we have that $[(d_b\vee z)\wedge d_t,(d_t\vee z)\wedge d_t]=[d_b\vee (z\wedge d_t),d_t]$. If $d_b\vee (z\wedge d_t)\lessdot d_t$, then $[d_b\vee z,d_t\vee z]\mydn[d_b\vee (z\wedge d_t),d_t]$, which would make $[d_b\vee (z\wedge d_t),d_t]$ a weight-1 cover in $D$ containing $d_t$, contradicting $D$ being prime. Therefore, $d_b\vee (z\wedge d_t)= d_t$. Similarly, $[d_b\vee (z\wedge d_b),d_b\vee (z\wedge d_t)]=[d_b,(d_b\vee z)\wedge d_t]$ implies that $d_b=(d_b\vee z)\wedge d_t$, since otherwise $[d_b,(d_b\vee z)\wedge d_t]$ would be a weight-$0$ cover in $D$ containing $d_b$, contradicting $D$ being prime. This gives that $d_b=(d_b\vee z)\wedge d_t=d_b\vee (z\wedge d_t)=d_t$, which is impossible, and hence we conclude that $M$ has no prime diamonds.

Conversely, suppose that $M$ has no prime diamonds. If $r(M)=\h(\mathcal{L})$, then ${\bf0}\in\mathcal{L}$ is totally clopen in $M$, so suppose otherwise. 
Let $z=\textup{cl}(\zero)$. We claim that $z$ is totally clopen. Since $z$ is closed, we have that $\textup{w}([z,x_1])=1$ whenever $z\lessdot x_1\in\mathcal{L}$. Fix such an $x_1$ that covers $z$ and let $x_1\lessdot x_2\in\mathcal{L}$. If $\textup{w}([x_1,x_2])=0$, then $[z,x_2]$ must be a prime diamond, which contradicts our assumption that $M$ is prime-free. Therefore, $\textup{w}([x_1,x_2])=1$. Since our choices of $x_1$ and $x_2$ were arbitrary, this process can be repeated until we get a maximal chain $z=x_0\lessdot x_1\lessdot\dots\lessdot x_k=\one$ such that $\textup{w}([x_{i-1},x_{i}])=1$ for $i=1,\dots,k$. Therefore, $r(M)=\textup{h}(\mathcal{L})-\textup{h}(z)$, which means that all covers $[a,b]\subseteq\mathcal{L}$ such that $z\leq a$ must have weight 1.\qedhere
\end{proof}

In order for a partition $\mathcal{P}$ of $\mathcal{L}$ to satisfy the properties of Definition~\ref{def:Tutte_partition}, we must have that all loops of $M$ are contained in an interval of the form $[\zero,b]\subseteq\mathcal{L}$ and all coloops of $M$ are contained in an interval of the form $[a,\one]\subseteq\mathcal{L}$. Otherwise, it would be impossible for $\mathcal{P}$ to satisfy all of the properties of a Tutte partition, as either a loop would be below an interval in $\mathcal{P}$, or a coloop would be above an interval in $\mathcal{P}$. We address these requirements with the following results.

\begin{lemma}\label{lem:meet_of_coloops}
\textcolor{white}{.}
\begin{enumerate}
    \item There exists $y \in \mL$ of rank zero such that every loop of $M$ is contained in $[\zero,y]$.
    \item There exists $y\in\mL$ such that $[y,{\bf1}]$ contains every coloop of $M$ and $r({\bf1})-r(y)=\h(\one)-\h(y) = \ell([y,{\bf1}])$.
\end{enumerate}

\end{lemma}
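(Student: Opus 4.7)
The plan is to choose $y$ in each case as a suitable join or meet over the relevant atoms or coatoms, and then verify the required properties via submodularity of $r$ combined with modularity of $h$.

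For part~1, I would set $y := \bigvee\{e \in \mL : e \text{ is a loop of } M\}$, a finite join since $\mL$ is finite. Every loop is $\leq y$ by construction, so the only substantive content is $r(y)=0$. This follows from axiom (R3) together with (R1): whenever $r(u)=r(v)=0$, one has $r(u \vee v) \leq r(u)+r(v)-r(u \wedge v) \leq 0$, and (R1) forces equality. Iterating over the finitely many loops of $M$ yields $r(y)=0$. (Equivalently, $y = \cl(\zero)$.)

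For part~2, I would dually set $y := \bigwedge\{c \in \mL : c \text{ is a coloop of } M\}$, so that $[y,\one]$ contains every coloop by construction. The core content is the identity $r(\one)-r(y) = h(\one)-h(y)$, which I would reformulate as $\nu(y)=\nu(\one)$ for the nullity $\nu := h-r$. The argument rests on three observations: (a) $\nu$ is supermodular, i.e.\ $\nu(u \vee v)+\nu(u \wedge v) \geq \nu(u)+\nu(v)$, which is immediate from the modularity identity $h(u \vee v)+h(u \wedge v)=h(u)+h(v)$ combined with submodularity (R3) of $r$; (b) $\nu(x) \leq \nu(\one)$ for every $x \in \mL$, since $r$ increases by at most one along each cover of $\mL$ and hence $r(\one)-r(x) \leq \ell([x,\one]) = h(\one)-h(x)$; and (c) every coloop $c$ satisfies $\nu(c) = \nu(\one)$, directly from $h(c)=h(\one)-1$ and $r(c)=r(\one)-1$.

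Combining (a)--(c), for any pair of coloops $c_1,c_2$ we obtain $\nu(c_1 \wedge c_2) \geq 2\nu(\one) - \nu(c_1 \vee c_2) \geq \nu(\one)$, and then (b) forces $\nu(c_1 \wedge c_2) = \nu(\one)$. Iterating this over the finitely many coloops yields $\nu(y)=\nu(\one)$, which rearranges to the desired identity. The remaining equality $h(\one)-h(y) = \ell([y,\one])$ is the Jordan--Dedekind chain condition in the modular lattice $\mL$. The main obstacle I expect is framing the inductive step that passes from the pairwise meet of two coloops to the full meet $y$ cleanly; a more conceptual alternative would be to invoke duality, observing that the anti-isomorphism $\varphi$ sends coloops of $M$ bijectively to loops of $M^*$, so that part~2 reduces to applying part~1 to $M^*$ after translating rank-zero in $M^*$ back to the nullity condition in $M$.
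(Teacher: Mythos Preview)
Your proof is correct. For part~1 you and the paper take essentially the same route: the paper simply declares $y=\cl(\zero)$ without further argument, and your submodularity computation supplies the missing justification that $r(y)=0$.

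For part~2 the two arguments diverge. The paper goes straight to duality: it lets $x$ be the closure of the minimum in $M^*$ (i.e.\ applies part~1 to the dual), sets $y=\varphi^{-1}(x)$, and then translates the condition $r^*(x)=0$ back to $r(\one)-r(y)=h(\one)-h(y)$ via the formula for $r^*$. Your primary argument is instead a direct one: you define $y$ as the meet of all coloops and show $\nu(y)=\nu(\one)$ using the supermodularity of nullity. This works, and the inductive step you flag as a potential obstacle is in fact routine---once you know $\nu(z)=\nu(\one)$ for some $z$ and $c$ is any coloop, supermodularity and your bound~(b) give $\nu(z\wedge c)\geq 2\nu(\one)-\nu(z\vee c)\geq \nu(\one)$, hence equality, so the meet can be built one coloop at a time. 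Your approach has the advantage of being self-contained and not requiring the dual machinery; the paper's approach has the advantage of making the symmetry between parts~1 and~2 explicit and avoiding any bookkeeping. You already identify the duality route as an alternative, so you have both arguments in hand.
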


\begin{proof} Clearly, there exists $y \in \mL$ such that $r(y)=0$ and every loop of $M$ is contained in the interval $[\zero,y]$, i.e. $y=\cl(\zero)$.
   Let $\varphi$ be an anti-isomorphism of $\mL$ and let $x$ be the closure of the minimal element of $\varphi(\mL)$ in $M^*$ and let $y \in \mL$ such that $x=\varphi(y)$. If $a$ is loop of $M^*$ then $a \leq x$ and so if $b:=\varphi^{-1}(a) \geq y$, then $r(b) = r(\one)-(\h(\one)-\h(b))=r(\one)-1$ and hence $b$ is a coloop of $M$. That is, there is a one-to-one correspondence between the loops of $M^*$ contained in $x$ and the coloops of $M$ that contain $y$. Finally, $r(y)=r^*(x)-\h(\one)+r(\one)+\h(y) =r(\one)+\h(y)-\h(\one)$ and so the result follows.
\end{proof}

\begin{corollary}\label{cor:atom_coatom_loops_coloops}
    Suppose that $0<r(\one)<\textup{h}(\one)$.
    \begin{enumerate}
        \item There exists a coatom $c\in\mathcal{L}$ such that every loop of $M$ is contained in $[\zero,c]$.
        \item There exists an atom $a\in\mathcal{L}$ such that every coloop of $M$ is contained in $[a,\one]$.
    \end{enumerate}
\end{corollary}
\begin{proof}
    Since $r(\one)\neq0$, we have that $\textup{cl}(\zero)<\one$, which means that there exists a coatom $c$ such that $\textup{cl}(\zero)\leq c$. By duality, the result follows.
\end{proof}

\begin{definition}
For an interval $[a,b]\subseteq\mL$ we say that a chain $C(a,b)\subseteq[a,b]$ is a \emph{clopen chain} if there exists $c\in C(a,b)$ such that for any cover $[x,y]\subseteq[a,b]$, we have:
\[
   \w([x,y]) =  \left\{ \begin{array}{cc}
      1  &  \text{ if } x \geq c \\
      0  &  \text{ if } y \leq c 
   \end{array}
   \right.
\]
Furthermore, we say that the clopen chain $C(a,b)$ is \emph{extendable} over $\mL$ if there exists $x\in \mL$ such that either $[b,x]$ is a weight-$1$ cover, or $[x,a]$ is a weight-$0$ cover. Otherwise, we say that $C(a,b)$ is \emph{non-extendable} over $\mL$.
\end{definition}

Note that if $M$ is prime-free, then there exists a clopen chain $C({\bf0},{\bf1})\subseteq\mL$ such that the totally clopen element $c\in\mL$ is in $C({\bf0},{\bf1})$.
\begin{lemma}\label{lem:non-ext_clopen_red_bot}
Let $C(a,b)\subseteq\mL$ be a clopen chain. Then $C(a,b)$ is non-extendable over $\mathcal{L}$ if and only if for every cover $[c,d]$ in $\mathcal{L}$, if $b\leq c$ then $[c,d]$ has weight $0$, and if $d\leq a$ then $[c,d]$ has weight $1$.
\end{lemma}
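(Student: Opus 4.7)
The plan is to handle the ``if'' direction directly from the definition and the ``only if'' direction by two dual inductions on height, each driven by the matroidal diamond classification of Definition~\ref{def:matweighting}.

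For the ``if'' direction, if every cover $[c,d] \in \mC$ with $b \leq c$ carries weight $0$, then in particular every cover of the form $[b,x]$ has weight $0$, so no weight-$1$ extension of $C(a,b)$ past $b$ exists; analogously no weight-$0$ extension exists below $a$. Hence $C(a,b)$ is non-extendable.

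For the ``only if'' direction, assume $C(a,b)$ is non-extendable. I would first prove that every cover $[c,d]$ in $\mL$ with $b \leq c$ has weight $0$ by induction on $k := h(c) - h(b)$. The base case $k = 0$ forces $c = b$, and then $\w([c,d]) = 0$ is exactly the non-extendability hypothesis. For the inductive step with $k \geq 1$, choose $c' \in [b, c]$ with $c' \lessdot c$ and consider the length-two interval $D = [c', d]$. Every middle element $e$ of $D$ lies above $c' \geq b$, and the cover $[c', e]$ satisfies $h(c') - h(b) = k - 1$, so the inductive hypothesis gives $\w([c', e]) = 0$. Consequently every bottom cover of $D$ carries weight $0$, which by Definition~\ref{def:matweighting} rules out the full, prime, and mixed types, forcing $D$ to be empty. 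In particular $\w([c, d]) = 0$.

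The second clause — every cover $[c, d]$ with $d \leq a$ has weight $1$ — follows by the dual induction on $h(a) - h(d)$: in the inductive step one forms the diamond $[c, d']$ with $d \lessdot d' \leq a$, uses the inductive hypothesis to show that every top cover of this diamond carries weight $1$, and invokes Definition~\ref{def:matweighting} to conclude the diamond is full, so $\w([c, d]) = 1$. The main obstacle I anticipate is confirming that the ``mixed'' type is excluded whenever all bottom (resp.\ top) weights in the diamond are uniformly $0$ (resp.\ $1$); this is handled by the fact that a mixed diamond admits at most one middle element with bottom weight $0$ and top weight $1$, so the argument is insensitive to whether the diamond has two middle elements (the Boolean case) or $q+1$ middle elements (the subspace case), and no case split on $q$ is required.
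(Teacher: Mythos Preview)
Your proof is correct. The ``if'' direction matches the paper's, and your ``only if'' direction via induction on $h(c)-h(b)$ and the diamond classification is sound: once every bottom cover of the length-two interval $D=[c',d]$ carries weight $0$, the full, prime, and mixed types are all excluded (each of these has at least one bottom cover of weight $1$), so $D$ is empty and $\w([c,d])=0$.

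The paper takes a different and shorter route for the ``only if'' direction: rather than inducting on the diamond types, it passes to the rank function on the minor $[b,\one]$ and uses submodularity. Since every atom $x$ of $[b,\one]$ has $r_{[b,\one]}(x)=0$ by non-extendability, repeated application of (R3) gives $r_{[b,\one]}(\one)\le\sum_x r_{[b,\one]}(x)=0$, so the entire minor has rank zero and every cover above $b$ has weight $0$; the dual argument handles covers below $a$. Your approach has the virtue of working purely at the level of the weighting without invoking the rank-function cryptomorphism, and makes the role of the matroidal diamond axioms explicit; the paper's approach is more compressed and leans on the equivalent rank-theoretic formulation already established.
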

\begin{proof}
If $C(a,b)$ is non-extendable over $\mL$, then each cover of the form $[b,x]\subset\mL$ has weight 0, and each cover of the form $[x,a]\subset\mL$ has weight 1. Let $S=\{x\in[b,\one]:\h(x)=\h(b)+1\}$. By the submodularity of any matroidal rank function, we have that $r_{[b,\one]}(\one)\leq \sum_{x\in S}r_{[b,\one]}(x)=0$, which implies that all covers $[c,d]$ with $b\leq c$ have weight 0. 
By Lemma~\ref{lem:green_red_proj}, since all covers of the form $[x,a]\subset\mL$ have weight 1 by assumption, we have that all covers $[c,d]$ with $d\leq a$ have weight 1.

Conversely, if all covers $[x,y]\subseteq\mL$ with $y\leq a$ have weight 1 and all covers $[x',y']\subseteq\mL$ with $b\leq x'$ have weight 0, then clearly all covers $[x,a]$ have weight 1  and all covers $[b,y']$ have weight 0. Therefore, $C(a,b)$ is non-extendable over $\mL$.
\end{proof}
\begin{lemma}\label{prop:[e,1]_with_[0,e]_red}
Let $M$ contain a prime diamond. There then exists an atom $a\in\mL$ and a minor $[a,{\bf1}]\subset\mathcal{L}$ such that  all coloops of $M$ are contained in $[a,{\bf1}]$ and $\w([{\bf0},a])=1$.
\end{lemma}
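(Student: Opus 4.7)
The plan is to use Lemma~\ref{lem:meet_of_coloops}(2) to locate an element $y \in \mL$ that sits below every coloop, and then to extract from $y$ a non-loop atom $a$; the prime-diamond hypothesis will be what rules out the degenerate case in which no such atom exists.

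First, I would invoke Lemma~\ref{lem:meet_of_coloops}(2) to obtain $y \in \mL$ such that $[y,\mathbf{1}]$ contains every coloop of $M$ and $r(\mathbf{1})-r(y)=h(\mathbf{1})-h(y)=\ell([y,\mathbf{1}])$. Along any maximal chain from $y$ to $\mathbf{1}$ the $\{0,1\}$-valued cover weights must sum to $\ell([y,\mathbf{1}])$, so every cover in $[y,\mathbf{1}]$ has weight $1$; in particular every cover $[x,z]\subseteq \mL$ with $x\geq y$ has weight $1$.

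The key step is to argue that $r(y)\geq 1$. Assume for contradiction that $r(y)=0$. By rank axiom (R2), every element of $[\mathbf{0},y]$ then has rank $0$, so every cover $[x,z]\subseteq \mL$ with $z\leq y$ has weight $0$. Combined with the previous paragraph, this says precisely that $y$ is a totally clopen element of $M$. By Theorem~\ref{th:no_prime_from_clopen} this contradicts the hypothesis that $M$ contains a prime diamond, so indeed $r(y)\geq 1$.

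Finally, I would extract a non-loop atom $a\leq y$. Since $\mL$ is $\mL(S)$ or $\mL(V)$, it is atomistic and $y$ is the join of the atoms beneath it; were every such atom a loop, iterated application of submodularity (R3) would force $r(y)=0$, contradicting the previous step. Hence some atom $a\leq y$ satisfies $r(a)=1$, that is, $\w([\mathbf{0},a])=1$, while $[a,\mathbf{1}]\supseteq [y,\mathbf{1}]$ contains every coloop of $M$, as required. The main obstacle is the middle step: one must recognise that the alternative $r(y)=0$ would promote $y$ itself to a totally clopen element, thereby forcing $M$ to be prime-free and contradicting the hypothesis.
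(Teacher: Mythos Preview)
Your proof is correct, and it takes a genuinely different route from the paper's.

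The paper argues by induction on $h(\mathbf{1})$. It first picks, via Lemma~\ref{prop:cont_top_reds}, any atom $a$ with all coloops in $[a,\mathbf{1}]$, then splits into two cases. If $[a,\mathbf{1}]$ is prime-free it builds a clopen chain $C(a,\mathbf{1})$, observes that this chain must be non-extendable over $\mL$ (since $M$ has no totally clopen element), and invokes Lemma~\ref{lem:non-ext_clopen_red_bot} to force $\w([\mathbf{0},a])=1$. If $[a,\mathbf{1}]$ still contains a prime diamond, the induction hypothesis yields an $f$ covering $a$ with $\w([a,f])=1$ and all coloops in $[f,\mathbf{1}]$; should $\w([\mathbf{0},a])=0$, the diamond $[\mathbf{0},f]$ is mixed, so another atom $e'\leq f$ with $\w([\mathbf{0},e'])=1$ can be selected.

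Your argument bypasses the induction and the case split entirely. By working directly with the element $y$ of Lemma~\ref{lem:meet_of_coloops}(2) you already have that every cover above $y$ has weight~$1$; the prime-diamond hypothesis is then used once, via Theorem~\ref{th:no_prime_from_clopen}, to rule out $r(y)=0$ (which would make $y$ totally clopen). Atomisticity plus submodularity then produces the desired non-loop atom $a\leq y$. This is shorter and conceptually cleaner: the whole obstruction is concentrated in the single observation that $r(y)=0$ would make $y$ totally clopen. The paper's version, while longer, stays within the vocabulary of clopen chains and non-extendability that the surrounding material (Lemmas~\ref{lem:non-ext_clopen_red_bot} and the proof of Theorem~\ref{thm:T-comp_partition_exist}) is built on.
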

\begin{proof}
This statement clearly holds if $\h(\one)=2$. Now let $\h(\one)=n\in\mathbb{N}$ and assume the statement holds for all matroids of length $\leq n-1$. By Corollary~\ref{cor:atom_coatom_loops_coloops}, there exists an atom $a\in \mL$ such that all coloops of $M$ are contained in $[a,{\bf1}]$. If $[a,{\bf1}]$ is prime-free, then by Theorem~\ref{th:no_prime_from_clopen} there is a totally clopen element in $[a,{\bf1}]$, and thus there is a clopen chain $C(a,{\bf1})$. Since $M$ contains a prime diamond, it does not have a totally clopen lattice element, and thus $C(a,{\bf1})$ is non-extendable over $\mL$. Therefore, by Lemma~\ref{lem:non-ext_clopen_red_bot}, $\w([{\bf0},a])=1$. Assume then that $[a,{\bf1}]$ contains a prime diamond. The induction hypothesis then gives us a minor $[f,{\bf1}]\subset[a,{\bf1}]$ such that all coloops in $[a,{\bf1}]$, and thus in $\mL$, are contained in $[f,{\bf1}]$ and furthermore, $\w([a,f])=1$. If we assume that $\w([{\bf0},a])=0$, then $[{\bf0},f]$ is a mixed diamond (since it contains a weight-1 cover as well as a loop), so we can choose an $e'\in[{\bf0},f]$ such that $\w([{\bf0},e'])=1$, and clearly $[e',{\bf1}]$ contains all the coloops of $\mL$ since $[f,{\bf1}]\subset[e',{\bf1}]$.
\end{proof}

\begin{lemma}\label{lem:disjoint_Tutte_minors}
Let $M$ contain a prime diamond. There then exists an atom $a\in\mL$ and copoint $c\in\mL$ such that every loop of $M$ is contained in $[{\bf0},c]$, every coloop of $M$ is contained in $[a,{\bf1}]$, and $[{\bf0},c]\cap[a,{\bf1}]=\emptyset$.
\end{lemma}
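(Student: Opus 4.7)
The plan is to invoke Lemma~\ref{prop:[e,1]_with_[0,e]_red} to obtain the atom $a$ with $\w([\zero,a])=1$ and $[a,\one]$ containing every coloop, and then construct the coatom $c$ by working in the interval above the loop closure $L:=\cl(\zero)$. Since $L$ contains every loop of $M$, it suffices to find a coatom $c$ of $\mL$ with $c\geq L$ and $a\not\leq c$: any $x\in[\zero,c]\cap[a,\one]$ would force $a\leq x\leq c$, contradicting the latter. Note that $a$ is not a loop (as $\w([\zero,a])=1$), so $a\not\leq L$ and hence $L\lessdot L\vee a$.

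The main obstacle is to verify that $\ell([L,\one])\geq 2$, since we need genuine room in $[L,\one]$ to select the coatom $c$. I would argue by exclusion: first, $L\neq\one$, for otherwise $r(\one)=0$ would contradict $r(a)=1$; second, $L$ cannot be a coatom of $\mL$, because then $\w([L,\one])=r(\one)-r(L)=r(\one)\in\{0,1\}$ would force $r(\one)=1$ and make $L$ a coatom with $r(L)=r(\one)-1$, i.e., a coloop of $M$. The defining property of $a$ would then yield $L\geq a$, contradicting $a\not\leq L$. (This is precisely where the coloop property of $a$ provided by Lemma~\ref{prop:[e,1]_with_[0,e]_red} is essential.)

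With $\ell([L,\one])\geq 2$, the interval $[L,\one]$ is a nontrivial Boolean lattice (if $\mL=\mL(S)$) or subspace lattice (if $\mL=\mL(V)$), and in either case the meet of all its coatoms equals $L$. Since $L\vee a>L$, at least one coatom of $[L,\one]$ fails to contain $L\vee a$; I would pick such a coatom and call it $c$. Because $[L,\one]$ is an interval of $\mL$, $c$ is a coatom of $\mL$. The inclusion $c\geq L$ ensures every loop lies in $[\zero,c]$, and $a\not\leq c$ (for otherwise $c\geq L\vee a$), so $[\zero,c]\cap[a,\one]=\emptyset$. Combined with the coloop condition on $a$ obtained from Lemma~\ref{prop:[e,1]_with_[0,e]_red}, all three required properties hold.
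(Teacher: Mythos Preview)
Your proof is correct and follows essentially the same route as the paper: invoke Lemma~\ref{prop:[e,1]_with_[0,e]_red} to get an independent atom $a$ below every coloop, set $L=\cl(\zero)$, observe $a\nleq L$, and then use that the meet of all coatoms above $L$ equals $L$ to extract a coatom $c\geq L$ with $a\nleq c$. The only difference is that your detour to establish $\ell([L,\one])\geq 2$ is unnecessary: the meet-of-coatoms argument already works when $L$ is itself a coatom (then $c=L$ and $a\nleq L$ finishes it), so the paper simply omits that case analysis.
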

\begin{proof}
By Corollary~\ref{cor:atom_coatom_loops_coloops} and Corollary~\ref{cor:atom_coatom_loops_coloops} we know that such an atom $a$ and copoint $c$ exist, so all that remains to be shown is that they can be chosen such that $a\nleq c$. By Lemma~\ref{prop:[e,1]_with_[0,e]_red} we can choose $a\in\mL$ such that $[{\bf0},a]$ has weight $1$. As observed in Lemma~\ref{lem:meet_of_coloops}, for $y=\cl(\zero)$, the minor $[{\bf0},y]$ contains all the loops of $M$ and clearly $a\nleq y$. If all copoints of $M$ containing $y$ also contain $a$, then their meet also contains $a$, but their meet is $y$, a contradiction. Therefore, there exists a copoint $c\in\mathcal{L}$ that contains $y$ and does not contain $a$, and the result follows.
\end{proof}

Now we give an alternative definition of a Tutte partition (referred to below as a Tutte-\emph{compatible} partition) of $M$. The reason we do this is for convenience in proving the main result of this section. As we'll shortly see, both definitions are in fact equivalent.
\begin{definition}\label{def:T-compatible_partition}

A \emph{Tutte-compatible partition} is a partition of $M$ into prime-free minors, such that for any minor $[a,b]\subseteq\mL$ in this partition, any clopen chain $C(a,b)\in[a,b]$ is non-extendable over $\mathcal{L}$.
\end{definition}

\begin{example}
    Consider the $q$-matroid given in Figure~\ref{fig:q-mat_lattice}, which is not prime-free. Recall that a Tutte partition of this $q$-matroid into prime-free minors is shown in Figure~\ref{fig:q-mat_lattice_part}. The cover $[\langle e_1,e_2\rangle,E]$ has weight zero, so the clopen chain $C(0,\langle e_1,e_2\rangle)$ is non-extendable over $\mathcal{L}$. By inspection, one can see that the maximal clopen chains of each interval of this partition are non-extandable over $\mathcal{L}$. Therefore, this Tutte partition is also a Tutte-compatible partition.
\end{example}

\begin{lemma}\label{lem:T-compat_iff_T-part}
A partition of $M$ is a Tutte partition if and only if it is a Tutte-compatible partition.
\end{lemma}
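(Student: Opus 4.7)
The plan is to exploit the fact that the three defining properties of a Tutte partition translate almost immediately into the non-extendability conditions of Lemma~\ref{lem:non-ext_clopen_red_bot}, and conversely. Both directions proceed interval by interval: fix $[a,b]\in\mathcal{P}$ and work entirely within $\mathcal{L}$.

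For the forward direction, I would suppose $\mathcal{P}$ is a Tutte partition and let $[a,b]\in\mathcal{P}$. Property~(3) of Definition~\ref{def:Tutte_partition} says the minor $M([a,b])$ is prime-free, so by Theorem~\ref{th:no_prime_from_clopen} applied to this minor a totally clopen element exists in $[a,b]$, yielding a clopen chain $C(a,b)\subseteq[a,b]$. To verify that any such chain is non-extendable over $\mathcal{L}$, I would argue by contradiction. If there were $x\in\mathcal{L}$ with $[b,x]$ a weight-$1$ cover, then $r(x)=r(b)+1=r(\one)+1$ by property~(2), violating monotonicity (R2). If there were $x\in\mathcal{L}$ with $[x,a]$ a weight-$0$ cover, then $r(x)=r(a)=h(a)$ by property~(1), whereas $h(x)=h(a)-1$, contradicting (R1).

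For the converse, I would take a Tutte-compatible partition $\mathcal{P}$ and fix $[a,b]\in\mathcal{P}$. Property~(3) of Definition~\ref{def:Tutte_partition} is immediate from Definition~\ref{def:T-compatible_partition}. Prime-freeness of $M([a,b])$ together with Theorem~\ref{th:no_prime_from_clopen} produces a totally clopen element in $[a,b]$ and hence a clopen chain $C(a,b)$. By Tutte-compatibility, this chain is non-extendable over $\mathcal{L}$, so Lemma~\ref{lem:non-ext_clopen_red_bot} yields that every cover $[c',d']$ in $\mathcal{L}$ with $b\leq c'$ has weight $0$, and every cover with $d'\leq a$ has weight $1$. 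Summing cover weights along any maximal chain from $b$ to $\one$ gives $r(\one)-r(b)=0$, while summing along any maximal chain from $\zero$ to $a$ gives $r(a)=h(a)$. These are precisely properties~(1) and~(2) of Definition~\ref{def:Tutte_partition}.

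The only real subtlety, and the step I expect to need the most care, is ensuring that the clopen chain extracted from $M([a,b])$ via Theorem~\ref{th:no_prime_from_clopen} is a legitimate input to Lemma~\ref{lem:non-ext_clopen_red_bot}, which is phrased for clopen chains sitting inside $\mathcal{L}$. Since the restriction of the matroidal weighting $\w$ to covers contained in $[a,b]$ coincides with the weighting of the minor, the totally clopen element in $[a,b]$ remains a clopen chain pivot when viewed inside $\mathcal{L}$, so no issue arises. With this identification in place, the whole argument amounts to an elementary translation between rank-language inside $[a,b]$ and the cover-weight language of $\mathcal{L}$.
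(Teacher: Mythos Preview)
Your proposal is correct and follows essentially the same approach as the paper. Both arguments use Theorem~\ref{th:no_prime_from_clopen} to pass between prime-freeness and the existence of a clopen chain, and Lemma~\ref{lem:non-ext_clopen_red_bot} to translate non-extendability into the rank conditions on $a$ and $b$; the only cosmetic difference is that in the forward direction you verify non-extendability by a direct contradiction with the rank axioms (R1) and (R2), whereas the paper first observes that all covers below $a$ have weight~$1$ and all covers above $b$ have weight~$0$ and then invokes the converse direction of Lemma~\ref{lem:non-ext_clopen_red_bot}.
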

\begin{proof}
Let $\mathcal{P}$ be a partition of $\mL$. %
If $\mathcal{P}$ is a Tutte partition of $M$, then for any $[a,b]\in\mathcal{P}$, we have that $r(a)=\h(a)$, $r(b)=r({\bf1})$, and $[a,b]$ is prime-free. Therefore, $[a,b]$ contains a lattice element $z$ that is totally clopen in $[a,b]$ and any chain of the form 
$a \lessdot \cdots \lessdot z \lessdot \cdots \lessdot b$ is totally clopen in $[a,b]$. 
Since $r(a)=\h(a)$ we have that $\w([c,d])=1$ for any cover $[c,d]$ of $\mL$ such that $d\leq a$. Since $r(b)=r({\bf1})$ we have that $\w([c,d])=0$ for any cover $[c,d]$ of $\mL$ such that $c\geq b$. In particular, by Lemma~\ref{lem:non-ext_clopen_red_bot}, any totally clopen chain in $[a,b]$ is non-extendable over $\mL$.

Conversely, suppose that $\mathcal{P}$ is a Tutte-compatible partition. Since for any $[a,b]\in\mathcal{P}$ there is a clopen chain $C(a,b)$, by Theorem \ref{th:no_prime_from_clopen} we have that $[a,b]$ is prime-free. Since $C(a,b)$ is non-extendable over $\mL$, by Lemma~\ref{lem:non-ext_clopen_red_bot}, the weight of any cover in $[\zero,a]$ is $1$ and the weight of and cover in $[b,\one]$. It follows that $r(a)=\h(a)$ and $r(b)=r({\bf1})$.
\end{proof}


We now describe a partition of $\mL$, called an proper interval decomposition. The definition of a proper interval decomposition comes from the following. The Goldman-Rota recurrence \cite{goldmanrota}:
\[
  G_{n+1} = 2 G_n +(q^n-1) G_{n-1},
\]
for the Galois numbers $\displaystyle G_n:=\sum_{j=0}^n \qbin{n}{j}{q}$,
has inspired the question as to whether this recurrence could be regarded as a consequence of the existence of a partition of the lattice of subspaces 
into two intervals of length $n-1$ and $q^n-1$ intervals of length $n-2$.
It has been shown in \cite[Theorem 1]{HITZEMANN20103551}, that such a partition is equivalent to the existence of particular linear forms. While there are partial results, a full classification of the subspace lattices that admit a proper interval decomposition is open. Nonetheless, our approach relies on the existence of such a lattice partition.

\begin{definition}[\cite{HITZEMANN20103551}]
Let $e\in\mL$ be an atom and let $e^c\in\mathcal{L}$ be a complement of $e$.
Let $\mP$ be a partition of $\mL$ of the form 
\[
   [e,{\bf1}]\:\dot{\cup}\: [{\bf0},e^c]\:  \dot{\cup} \: [e_1,\Bar{e}_1] \:\dot{\cup} \:\cdots\:\dot{\cup}\:[e_{q^{n-1}-1},\bar{e}_{q^{n-1}-1}],
\]
where for each $k$, $e_k \leq \bar{e}_k$ for an atom $e_k$ and a coatom $ \bar{e}_k$.
We say that $\mP$ is a {\em proper interval decomposition} of $\mL$
and denote such a partition by $\mathcal{P}(e)$.
\end{definition}
Note that $\mathcal{P}(e)$ is not necessarily unique for each atom $e$, but it is sufficient for our purposes to choose one such partition for a given $e$. Note that $\mathcal{P}(e)$ is a non-trivial partition of $\mL$ into intervals with a minimal number of elements.
\begin{remark}
In the case that $\mL$ is a Boolean lattice, we have the proper interval decomposition
$\mathcal{P}(e)=[e,{\bf1}]\:\dot{\cup}\: [{\bf0},e^c]$, which is a partition of $M$ into two minors, one of which contains $e$ and one that contains all elements in the unique complement of $e$.
\end{remark}
We will now use Definition~\ref{def:T-compatible_partition} and Lemma~\ref{lem:T-compat_iff_T-part} to prove the following theorem. More explicitly,  we will show that a proper interval decomposition 
$\mathcal{P}(e)$, with $e$ and $e^c$ selected as described in the statement of Lemma \ref{lem:disjoint_Tutte_minors}, has a refinement that is a Tutte-compatible partition.

\begin{theorem}\label{thm:T-comp_partition_exist}
    Let $\mL$ have a proper interval decomposition. Then $M$ has a Tutte partition.
\end{theorem}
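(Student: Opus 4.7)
The plan is to proceed by induction on $n = h(\one)$, refining a minimal $q$-partition $\mathcal{S}(e)$ of $\mL$ into a Tutte partition by recursively applying the induction hypothesis to each block. When $n \leq 1$, or more generally whenever $M$ has no prime diamond, the single-block partition $\{[\zero,\one]\}$ will already be a Tutte partition: the prime-free condition is vacuous or given (Theorem~\ref{th:no_prime_from_clopen}), and the boundary rank conditions $r(\zero) = h(\zero) = 0$ and $r(\one) = r(\one)$ are trivial.

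Now suppose $n \geq 2$ and $M$ contains a prime diamond. The first step is to invoke Lemma~\ref{lem:disjoint_Tutte_minors} together with the construction in the proof of Lemma~\ref{prop:[e,1]_with_[0,e]_red} to fix an atom $e\in\mL$ and a copoint $e^c\in\mL$ with the following properties: $\w([\zero,e])=1$ (so $e$ is not a loop); every loop of $M$ lies in $[\zero,e^c]$ and every coloop of $M$ lies in $[e,\one]$; and $e\not\leq e^c$, whence $e^c$ is a complement of the atom $e$. I would then apply Lemma~\ref{prop:lattice_partition} to produce the minimal $q$-partition
\[
\mathcal{S}(e) \;=\; [e,\one]\;\dot{\cup}\;[\zero,e^c]\;\dot{\cup}\;[e_1,\bar e_1]\;\dot{\cup}\;\cdots\;\dot{\cup}\;[e_{m},\bar e_{m}],
\]
where $m=q^{n-1}-1$, each $e_k$ is an atom, and each $\bar e_k$ is a copoint. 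Each block of $\mathcal{S}(e)$ is a minor of $M$ whose support lattice has height strictly less than $n$, so by the induction hypothesis each block will carry its own Tutte partition. Taking the union of these sub-partitions yields a candidate partition $\mathcal{P}$ of $\mL$, and I claim $\mathcal{P}$ is a Tutte partition of $M$.

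Prime-freeness of every block of $\mathcal{P}$ is inherited directly from the inductive hypothesis, so the task will reduce to checking, for every $[a,b]\in\mathcal{P}$, that $r(a)=h(a)$ and $r(b)=r(\one)$ \emph{globally} in $M$ rather than merely inside the sub-minor containing it. Since the induction hypothesis already yields the analogous identities inside each block of $\mathcal{S}(e)$, it will suffice to check these two conditions on the extreme elements $\zero, e, e_k$ and $\one, e^c, \bar e_k$ of $\mathcal{S}(e)$ itself. For the lower extremes: $r(\zero)=0=h(\zero)$; $r(e)=1=h(e)$ since $\w([\zero,e])=1$; and $r(e_k)=1=h(e_k)$ because $e_k$ is an atom not contained in $[\zero,e^c]$, hence not a loop. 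For the upper extremes: $r(\one)=r(\one)$; $r(e^c)=r(\one)$ because $e^c$ is a copoint not in $[e,\one]$ and every coloop lies in $[e,\one]$, so $e^c$ is not a coloop and $\w([e^c,\one])=0$; and analogously $r(\bar e_k)=r(\one)$ for each $k$, since each $\bar e_k$ is a copoint outside $[e,\one]$. The essential difficulty, which is already resolved by the preceding lemmas, lies in the judicious choice of $e$ and $e^c$; once these have been fixed, the remaining verification is a routine book-keeping exercise.
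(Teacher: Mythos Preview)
Your proof is correct and follows essentially the same inductive strategy as the paper: handle the prime-free case trivially, then for the non-prime-free case choose $e$ and $e^c$ via Lemma~\ref{lem:disjoint_Tutte_minors}, decompose $\mL$ using the minimal $q$-partition $\mathcal{S}(e)$ from Lemma~\ref{prop:lattice_partition}, and apply the induction hypothesis to each block. The only difference is cosmetic: the paper verifies the result via the equivalent Tutte-\emph{compatible} partition characterisation (Definition~\ref{def:T-compatible_partition} and Lemma~\ref{lem:T-compat_iff_T-part}), checking that each clopen chain is non-extendable over $\mL$, whereas you verify the Tutte partition axioms of Definition~\ref{def:Tutte_partition} directly by lifting the boundary conditions $r(a)=h(a)$ and $r(b)=r(\one)$ from each block of $\mathcal{S}(e)$ to $M$; your route is marginally more direct and avoids the detour through clopen chains, but the substance is identical.
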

\begin{proof}
If $\h(\one)=2$ the statement clearly holds (note that intervals may have a length of zero), so assume that it holds for all $M=(\mL,r)$ such that $\h(\one)\leq n-1$. Let $\h(\one) = n$. If $M$ is prime-free then there is nothing to show, so assume that $M$ contains a prime diamond. By Lemma~\ref{lem:disjoint_Tutte_minors}, we can choose an atom 
$e\in\mL$ and a proper interval decomposition $\mathcal{P}(e)$ such that all coloops of $M$ are in $[e,{\bf1}]$ and all loops of $M$ are in $[{\bf0},e^c]$. Let $\{[e_i,\bar{e}_i]: 1 \leq i \leq q^{n-1}-1\}$ be the set of remaining intervals of $\mathcal{P}(e)$. %
Since the length of each of these minors is at most $n-1$, the induction hypothesis states that each of these minors has a Tutte partition, and hence by Lemma~\ref{lem:T-compat_iff_T-part} each element of each such partition has a non-extendable clopen chain. By construction, the covers $[e^c,{\bf1}],[\bar{e}_1,{\bf1}],\dots,[\bar{e}_k,{\bf1}]$ all have weight 0 and the covers $[{\bf0},e],[{\bf0},e_1],\dots,[{\bf0},e_k]$ all have weight 1. Therefore, if $C(x,y)\subset\mL$ is a clopen chain in one of the intervals $A\in\mathcal{P}(e)$ and is non-extendable over $A$, it must be that $C(x,y)$ is also non-extendable over $\mL$ by Lemma~\ref{lem:non-ext_clopen_red_bot} since $r(x)=\h(x)$ and $r(y)=r({\bf1})$. The result now follows.
\end{proof}

\begin{notation}
  For an interval $A=[a,b]\subseteq\mL$, we write $\one_A:=b$ and $\zero_A:=a$.
\end{notation}

We are now ready to give a definition of a Tutte polynomial.
\begin{definition}\label{def:q-Tutte_poly}
Let $\mL$ have a proper interval decomposition. Let $\mathcal{P}$ be a Tutte partition of $M$. We define the {\em Tutte polynomial} of $M$ with respect to the partition $\mathcal{P}$ to be
$$\tau_{\mathcal{P}}(M;x,y)=\displaystyle\sum_{A\in\mathcal{P}}x^{r_A(\one_A)}y^{\nu_A(\one_A)}.$$
\end{definition}
While the definition we give here is given with respect to a partition $\mP$, we will see in Section~\ref{sec:ranktutte} that the Tutte polynomial is independent of the choice of Tutte partition.

\begin{example}\label{ex:P1tutte}
Let $M=(\mL(\F_2^3),r)$ be the $q$-matroid with support lattice shown on the left of Figure~\ref{fig:tutte_example}. On the right of Figure~\ref{fig:tutte_example}, we see a Tutte partition $\mP$ of $M$ into 5 prime-free minors. 
The diamond $[e,\one]=[\langle 101 \rangle, \one]$ is empty and has rank $0$ and nullity $2$, which contributes the term $y^2$ to $\tau_\mP(M,x,y)$. The minor with support lattice $[\zero,e^c]=[\zero,\langle 100,010\rangle]$ has rank $1$ and nullity $1$, which corresponds to the term $xy$. The remaining 3 intervals
support minors of length $1$, rank $0$, and nullity $1$.
\\~

\begin{figure}[h]
    \centering
    \includegraphics[scale=0.90]{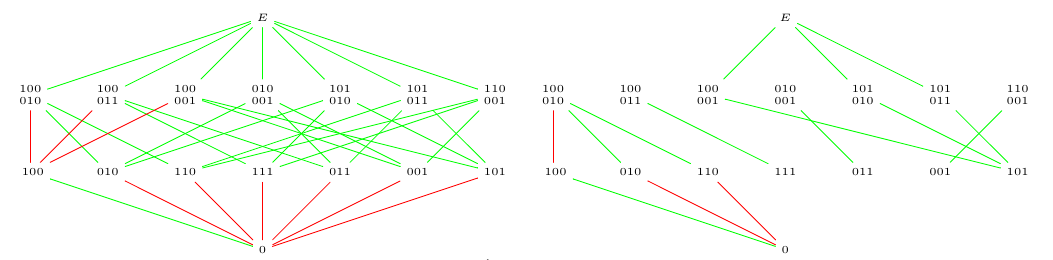}
    \caption{The Tutte Polynomial $\tau_\mP(M;x,y)=xy+y^2+3y$.}
    \label{fig:tutte_example}
\end{figure}
\end{example}


\section{An inversion formula}\label{sec:inv}

We establish an inversion formula, which may be of independent interest. The main result of this section is Corollary~\ref{cor:alphabetainv}, which we will use in Section \ref{sec:ranktutte} to show that the rank generating polynomial of $M$ can be derived from a Tutte polynomial of $M$ and vice-versa.

\begin{definition}
    Let $n,m,i,j$ be non-negative integers. We define:
    \[
    \alpha_q(n,m;i,j):=\left[\begin{matrix}n\\i\end{matrix}\right]_q\left[\begin{matrix}m\\j\end{matrix}\right]_qq^{(n-i)(m-j)}.
    \]
\end{definition}

\begin{definition}
   Let $\gamma(i,j;a,b)$ be non-negative integers for all non-negative integers $i,j,a,b$.
   We define $\gamma : \Z[x,y] \longrightarrow \Z[x,y]$ by: 
   \[
   \gamma*P:=\sum_{i=0}^u \sum_{j=0}^v p_{i,j} \sum_{a=0}^{i}\sum_{b=0}^{j}\gamma(i,j;a,b)x^ay^b,
   \]
   for all $\displaystyle P(x,y)=\sum_{i=0}^u \sum_{j=0}^v p_{i,j} x^i y^j \in \Z[x,y]$. 
\end{definition}


We will now establish the existence of a map 
$\beta: \Z[x,y] \longrightarrow \Z[x,y]$ such that 
$\alpha * \beta = \beta * \alpha$ is the identity map on $\Z[x,y]$. 
A candidate for such a map $\beta$ was suggested in \cite[Conjecture 1]{BCJ17} on the basis of experimental computations. We provide a proof of that conjecture in Theorem~\ref{thm:rank_to_tutte}.

\begin{definition}
    The \emph{Kronecker delta} is the binary function on $\mathbb{R}\times\mathbb{R}$ defined by:
    \begin{equation*}
        \delta^a_b=\left\{
        \begin{array}{cc}
            1 & \text{ if } a=b,\\
            0 & \text{ otherwise.}
        \end{array}
        \right.
    \end{equation*}
\end{definition}

\begin{definition}
Let $a,b,c,d$ be non-negative integers. Define:
$$\beta_q(a,b;c,d):=(-1)^{(a-c)+(b-d)}\left[\begin{matrix}a\\c\end{matrix}\right]_q\left[\begin{matrix}b\\d\end{matrix}\right]_qq^{\binom{|(a-c)-(b-d)|}{2}}(1+q^{|(a-c)-(b-d)|}-q^{\textup{max}(a-c,b-d)}).$$
\end{definition}
We will show that the following relation holds, showing that $\beta$ is the required inverse map of $\alpha$.
\begin{equation}\label{conj_1_BCJ}
    \displaystyle\sum_{c=0}^a\sum_{d=0}^b\alpha_q(a,b;c,d)\beta_q(c,d;u,v)=\delta_a^u\delta_b^v.
\end{equation}

We will use the following identity, which is an easy consequence of the $q$-binomial theorem. 
Recall Definition \ref{def:negbin} for the binomial coefficient, which appears in the following. 
\begin{lemma}\label{lem:BCJ_conj_2}
For any non-negative value of n, and any integer s with $0\leq s<n$,
$$\displaystyle\sum_{i=0}^{n}(-1)^iq^{\binom{i-s}{2}}\left[\begin{matrix}n\\i\end{matrix}\right]_q=\delta^n_0.$$
\end{lemma}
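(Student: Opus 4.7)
The plan is to reduce the identity to a direct application of the $q$-Binomial Theorem by pulling out a factor depending only on $s$ and recognizing what remains as a specialization of the right-hand side of that theorem.

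First I would observe the elementary identity
\[
\binom{i-s}{2}=\binom{i}{2}-is+\binom{s+1}{2},
\]
which is immediate by expanding $\tfrac{(i-s)(i-s-1)}{2}$ and collecting terms. This is valid for all integers $i,s$ and lets us factor
\[
q^{\binom{i-s}{2}} = q^{\binom{s+1}{2}}\cdot q^{\binom{i}{2}}\cdot (q^{-s})^{i}.
\]

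Next I would substitute this into the sum and pull the $s$-dependent constant $q^{\binom{s+1}{2}}$ outside:
\[
\sum_{i=0}^{n}(-1)^{i}q^{\binom{i-s}{2}}\left[\begin{matrix}n\\i\end{matrix}\right]_q
= q^{\binom{s+1}{2}}\sum_{i=0}^{n}(-1)^{i}q^{\binom{i}{2}}(q^{-s})^{i}\left[\begin{matrix}n\\i\end{matrix}\right]_q.
\]
The remaining sum is precisely the right-hand side of the $q$-Binomial Theorem evaluated at $t=q^{-s}$, so it equals
\[
\prod_{k=0}^{n-1}(1-q^{k}\cdot q^{-s})=\prod_{k=0}^{n-1}(1-q^{k-s}).
\]

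Finally, since by hypothesis $0\leq s<n$, the index value $k=s$ appears in the product, contributing the factor $1-q^{0}=0$, so the whole product vanishes. Thus for $n\geq 1$ (the only case in which the hypothesis on $s$ is non-vacuous) the sum is $0=\delta_{0}^{n}$, completing the proof.

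There is essentially no obstacle here; the only non-routine step is spotting the splitting of $\binom{i-s}{2}$, after which the $q$-Binomial Theorem does all the work. In particular, no induction on $n$ or $s$ is needed.
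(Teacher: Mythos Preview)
Your proof is correct and follows essentially the same approach as the paper: both split $\binom{i-s}{2}=\binom{i}{2}-is+\binom{s+1}{2}$, factor out $q^{\binom{s+1}{2}}$, and then apply the $q$-Binomial Theorem at $t=q^{-s}$ to obtain the product $\prod_{k=0}^{n-1}(1-q^{k-s})$, which vanishes because the factor at $k=s$ is zero. The only cosmetic difference is that the paper explicitly mentions the trivial $n=0$ case, whereas you (correctly) note that the hypothesis on $s$ is vacuous there.
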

\begin{proof}
The $n=0$ case is trivial. Next observe that 
$$\binom{i-s}{2}=\frac{(i-s)(i-s-1)}{2}=\frac{i^2-is-i-is+s(s+1)}{2}=\binom{i}{2}-si+\binom{s+1}{2}$$
to get 
$$\displaystyle\sum_{i=0}^{n}(-1)^iq^{\binom{i-s}{2}}\left[\begin{matrix}n\\i\end{matrix}\right]_q=q^{\binom{s+1}{2}}\displaystyle\sum_{i=0}^{n}(-1)^iq^{\binom{i}{2}}\left[\begin{matrix}n\\i\end{matrix}\right]_qq^{-si}.$$
The $q$-binomial theorem states that for all $s$, we have
$$0=\displaystyle\prod_{i=0}^{n-1}(1-q^{i-s})=\displaystyle\sum_{i=0}^{n}(-1)^iq^{\binom{i}{2}}\left[\begin{matrix}n\\i\end{matrix}\right]_qq^{-si}.$$
The result now follows. 
\end{proof}

Note that $\alpha_q(a,b;c,d)=0$ whenever $c>a$ or $d>b$, while $\alpha_q(a,b;a,d)=1=\alpha_q(a,b;c,b)$.
Also $\beta_q(c,d;u,v)=0$ whenever $u>c$ or $v>d$.
Therefore, we assume $c\leq a,u$ and $d\leq b,v$ and set $c=u+i$ and $d=v+j$ for non-negative integers $i,j$, which yields $\alpha_q(a,b;c,d)\beta_q(c,d;u,v)=$
\begin{align*}
    \left[\begin{matrix}a\\u\end{matrix}\right]_q\left[\begin{matrix}b\\v\end{matrix}\right]_q\left[\begin{matrix}a-u\\i\end{matrix}\right]_q\left[\begin{matrix}b-v\\j\end{matrix}\right]_q(-1)^{i+j}q^{(a-u-i)(b-v-j)}q^{\binom{|i-j|}{2}}(1+q^{|i-j|}-q^{\textup{max}(i,j)}).
\end{align*}
Since the summands in (\ref{conj_1_BCJ}) are now enumerated by $i$ and $j$, and
$\qbin{a}{u}{q}\qbin{b}{v}{q}$ 
is a nonzero common factor, we can quotient out this common factor without loss of generality. %
\begin{definition}
Let $a,b,c,d,u,v$ be non-negative integers satisfying $u\leq a$ and $v\leq b$. We define:
\[
N_{i,j}(a-u,b-v):=\alpha_q(a,b;c,d)\beta_q(c,d;u,v)\left(\left[\begin{matrix}a\\u\end{matrix}\right]_q\left[\begin{matrix}b\\v\end{matrix}\right]_q\right)^{-1}.
\]
\end{definition}

\begin{theorem}\label{thm:rank_to_tutte}

Let $a,b$ be non-negative integers. We have:
$$\displaystyle\sum^{a}_{i=0}\displaystyle\sum^{b}_{j=0}N_{i,j}(a,b)=\delta^a_0\delta^b_0.$$ 
\end{theorem}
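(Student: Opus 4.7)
The plan is to manipulate the double sum into single-variable sums to which the $q$-binomial theorem applies. Write $S(A,B)$ for the double sum in the statement. Expanding $q^{(A-i)(B-j)}=q^{AB}\cdot q^{-iB-Aj+ij}$ and rewriting $(-1)^{i+j}q^{-iB-Aj}=(-q^{-B})^i(-q^{-A})^j$, I factor out the overall $q^{AB}$ to obtain
$$\frac{S(A,B)}{q^{AB}}=\sum_{i=0}^{A}\sum_{j=0}^{B}\qbin{A}{i}{q}\qbin{B}{j}{q}(-q^{-B})^i(-q^{-A})^j\,q^{ij+\binom{|i-j|}{2}}\bigl(1+q^{|i-j|}-q^{\max(i,j)}\bigr).$$

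The key algebraic identity I will establish is
$$q^{ij+\binom{|i-j|}{2}}\bigl(1+q^{|i-j|}-q^{\max(i,j)}\bigr) = q^{\binom{i}{2}+\binom{j+1}{2}}+q^{\binom{i+1}{2}+\binom{j}{2}}-q^{\binom{i+1}{2}+\binom{j+1}{2}}.$$
This follows from the elementary identity $ij+\binom{|i-j|}{2}=\binom{i}{2}+\binom{j}{2}+\min(i,j)$ (verified by a direct computation in the cases $i\ge j$ and $j>i$) together with $|i-j|=\max(i,j)-\min(i,j)$ and $\binom{k+1}{2}=\binom{k}{2}+k$. The decisive feature of the right-hand side is that the $i$- and $j$-dependencies have separated, and this is the only genuinely computational step of the proof.

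Substituting this identity expresses $S(A,B)/q^{AB}$ as $T_1+T_2-T_3$, in which each $T_\ell$ factors as a product of an $i$-sum and a $j$-sum. Applying the $q$-binomial theorem in the form $\sum_{k=0}^{n}\qbin{n}{k}{q}(-t)^k q^{\binom{k}{2}}=\prod_{\ell=0}^{n-1}(1-q^\ell t)$, and absorbing an extra $q^j$ via $(-q^{-A})^j q^j=(-q^{1-A})^j$ (similarly for the $i$-sum), I obtain
\begin{align*}
T_1 &= \prod_{\ell=0}^{A-1}(1-q^{\ell-B})\prod_{\ell=0}^{B-1}(1-q^{\ell+1-A}),\\
T_2 &= \prod_{\ell=0}^{A-1}(1-q^{\ell+1-B})\prod_{\ell=0}^{B-1}(1-q^{\ell-A}),\\
T_3 &= \prod_{\ell=0}^{A-1}(1-q^{\ell+1-B})\prod_{\ell=0}^{B-1}(1-q^{\ell+1-A}).
\end{align*}

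The proof concludes with a short case analysis, based on inspecting which of the exponents $\ell-B,\ \ell+1-B,\ \ell-A,\ \ell+1-A$ equal $0$ for some $\ell$ in the relevant range (such an $\ell$ contributes a vanishing factor $1-q^0$). When $A=B=0$ every product is empty, so $T_1=T_2=T_3=1$ and $S(0,0)=1$. For $A=0<B$ only the product $\prod_{\ell=0}^{B-1}(1-q^{\ell-A})$ in $T_2$ vanishes, but $T_1=T_3$, so $T_1+T_2-T_3=0$. For $A=B\ge 1$ and for $0<A<B$, each of $T_1, T_2, T_3$ contains a vanishing factor, giving $0$. The remaining cases $A>B\ge 0$ follow from the symmetry $(A,i)\leftrightarrow(B,j)$ of the original sum. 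Altogether, $S(A,B)=\delta_0^A\delta_0^B$, as required.
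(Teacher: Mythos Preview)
Your proof is correct and takes a genuinely different route from the paper's argument.

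The paper fixes $i$ and computes the row sum $\sum_j N_{i,j}(a,b)$ by splitting the factor $(1+q^{|i-j|}-q^{\max(i,j)})$ into three pieces $S_{1,i},S_{2,i},S_{3,i}$, then further splitting each of these according to the sign of $i-j$ in order to resolve the absolute value. After a fairly lengthy manipulation of the $q$-exponents, each piece is brought into the form required by Lemma~\ref{lem:BCJ_conj_2} (a consequence of the $q$-binomial theorem) and shown to vanish; the diagonal case $a=b$ is treated separately. Your key step is the identity
\[
q^{ij+\binom{|i-j|}{2}}\bigl(1+q^{|i-j|}-q^{\max(i,j)}\bigr)
   = q^{\binom{i}{2}+\binom{j+1}{2}}+q^{\binom{i+1}{2}+\binom{j}{2}}-q^{\binom{i+1}{2}+\binom{j+1}{2}},
\]
which eliminates the absolute value and separates the $i$- and $j$-dependence in one stroke. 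This lets you apply the $q$-binomial theorem directly in its product form and finish with a short inspection of which factors $1-q^0$ appear. The paper's approach stays closer to the raw definition and relies on Lemma~\ref{lem:BCJ_conj_2} as a black box; your approach requires spotting (or verifying) the separating identity but is considerably shorter and avoids the $i\lessgtr j$ and $a=b$ case splits entirely.
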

\begin{proof}
Let $b>a$, then consider the $i$-th row sum
\begin{equation*}
\begin{aligned}
\displaystyle\sum^{b}_{j=0}N_{i,j}(a,b)=&(-1)^i\left[\begin{matrix}a\\i\end{matrix}\right]_q\displaystyle\sum^{b}_{j=0}(-1)^j\left[\begin{matrix}b\\j\end{matrix}\right]_qq^{(a-i)(b-j)}q^{\binom{|i-j|}{2}}(1+q^{|i-j|}-q^{\textup{max}(i,j)})\\
=&(-1)^i\left[\begin{matrix}a\\i\end{matrix}\right]_q(S_{1,i}+S_{2,i}-S_{3,i}),
\end{aligned}
\end{equation*}
where
\begin{equation*}
\begin{aligned}
S_{1,i}:=&\displaystyle\sum^{b}_{j=0}(-1)^j\left[\begin{matrix}b\\j\end{matrix}\right]_qq^{(a-i)(b-j)}q^{\binom{|i-j|}{2}},\\
S_{2,i}:=&\displaystyle\sum^{b}_{j=0}(-1)^j\left[\begin{matrix}b\\j\end{matrix}\right]_qq^{(a-i)(b-j)}q^{\binom{|i-j|}{2}}q^{|i-j|},\\
S_{3,i}:=&\displaystyle\sum^{b}_{j=0}(-1)^j\left[\begin{matrix}b\\j\end{matrix}\right]_qq^{(a-i)(b-j)}q^{\binom{|i-j|}{2}}q^{\textup{max}(i,j)}.
\end{aligned}
\end{equation*}
We will now express the $S_{t,i}$ in more convenient forms. First focusing on the exponents of $q$, we will consider the cases $i\geq j$ and $j\leq i$ separately.

Suppose first that $i\geq j$ in $S_{1,i}$. We have
\begin{equation*}
\begin{aligned}
(a-i)(b-j)+\binom{i-j}{2}&=ab-aj-bi+ij+\frac{1}{2}(i-j)(i-j-1)\\
&=\frac{1}{2}(2ab-2aj-2bi+2ij+i^2-2ij-i+j^2+j)\\
&=\frac{(j-(a-1))(j-(a-1)-1)}{2}-\frac{a(a-1)}{2}+\frac{i(i-1)}{2}+b(a-i)\\
&=\binom{j-(a-1)}{2}-\binom{a}{2}+\binom{i}{2}+b(a-i).
\end{aligned}
\end{equation*}
For the case $i\leq j$ in $S_{1,i}$ we have:
\begin{equation*}
\begin{aligned}
(a-i)(b-j)+\binom{j-i}{2}&=ab-aj-bi+ij+\frac{1}{2}(j-i)(j-i-1)\\
&=\frac{1}{2}(2ab-2aj-2bi+2ij+i^2-2ij+i+j^2-j)\\
&=\frac{(j-a)(j-a-1)}{2}-\frac{a(a+1)}{2}+\frac{i(i+1)}{2}+b(a-i)\\
&=\binom{j-a}{2}-\binom{a+1}{2}+\binom{i+1}{2}+b(a-i).
\end{aligned}
\end{equation*}
For the exponents of $q$ in $S_{2,i}$ and $S_{3,i}$ we can just add the relevant term to the expressions above, the explicit computations of which are omitted, since it is clear at this point (eg. add $j-i$ to the last expression above to get the exponent part of $S_{2,i}$ when $i\leq j$). Now we see that we have
\begin{equation*}
\begin{aligned}
S_{1,i}&=q^{\binom{i}{2}-\binom{a}{2}+b(a-i)}\left(\displaystyle\sum^{i-1}_{j=0}(-1)^j\left[\begin{matrix}b\\j\end{matrix}\right]_qq^{\binom{j-(a-1)}{2}}\right)+q^{\binom{i+1}{2}-\binom{a+1}{2}+b(a-i)}\left(\displaystyle\sum^{b}_{j=i}(-1)^j\left[\begin{matrix}b\\j\end{matrix}\right]_qq^{\binom{j-a}{2}}\right),\\
S_{2,i}&=q^{\binom{i+1}{2}-\binom{a+1}{2}+b(a-i)}\left(\displaystyle\sum^{i-1}_{j=0}(-1)^j\left[\begin{matrix}b\\j\end{matrix}\right]_qq^{\binom{j-a}{2}}\right)+q^{\binom{i}{2}-\binom{a}{2}+b(a-i)}\left(\displaystyle\sum^{b}_{j=i}(-1)^j\left[\begin{matrix}b\\j\end{matrix}\right]_qq^{\binom{j-(a-1)}{2}}\right),\\
S_{3,i}&=q^{\binom{i+1}{2}-\binom{a}{2}+b(a-i)}\left(\displaystyle\sum^{b}_{j=0}(-1)^j\left[\begin{matrix}b\\j\end{matrix}\right]_qq^{\binom{j-(a-1)}{2}}\right)
=0.
\end{aligned}
\end{equation*}
Now add $S_{1,i}$ and $S_{2,i}$ and apply Lemma \ref{lem:BCJ_conj_2} to obtain:
\begin{equation*}
\begin{aligned}
S_{1,i}+S_{2,i}&=q^{\binom{i}{2}-\binom{a}{2}+b(a-i)}\sum^{b}_{j=0}(-1)^j\left[\begin{matrix}b\\j\end{matrix}\right]_qq^{\binom{j-(a-1)}{2}}+q^{\binom{i+1}{2}-\binom{a+1}{2}+b(a-i)}\sum^{b}_{j=0}(-1)^j\left[\begin{matrix}b\\j\end{matrix}\right]_qq^{\binom{j-a}{2}}\\
&=0.
\end{aligned}
\end{equation*}

It now follows that:
\begin{equation*}
\begin{aligned}
\displaystyle\sum^{a}_{i=0}\displaystyle\sum^{b}_{j=0}N_{i,j}(a,b)&=\displaystyle\sum^{a}_{i=0}(-1)^i\left[\begin{matrix}a\\i\end{matrix}\right]_q(S_{1,i}+S_{2,i}-S_{3,i})=0.
\end{aligned}
\end{equation*}
Since relabelling makes the above proof work for $a>b$, all that remains to check is when $a=b$. Now set $a=b$, with $S_{1,i}, S_{2,i}, S_{3, i}$, defined as before 
 (writing $a$ instead of $b$)
\begin{equation*}
\begin{aligned}
S_{1,i}+S_{2,i}&=q^{\binom{i+1}{2}-\binom{a+1}{2}+a(a-i)}\displaystyle\sum^a_{j=0}(-1)^j\left[\begin{matrix}a\\j\end{matrix}\right]_qq^{\binom{j-a}{2}}\\
&=q^{\binom{i+1}{2}-\binom{a+1}{2}+a(a-i)}f(a),\\
S_{3,i}&=0.
\end{aligned}
\end{equation*}
where $f(a)=\displaystyle\sum^a_{j=0}(-1)^j\left[\begin{matrix}a\\j\end{matrix}\right]_qq^{\binom{j-a}{2}}$. Putting things back together, using Lemma~\ref{lem:BCJ_conj_2}, we have
\begin{equation*}
\begin{aligned}
\displaystyle\sum^a_{i=0}\displaystyle\sum^a_{j=0}N_{i,j}(a, a)&=
\displaystyle\sum^a_{i=0}(-1)^i\left[\begin{matrix}a\\i\end{matrix}\right]_qq^{\binom{i+1}{2}-\binom{a+1}{2}+a(a-i)}f(a)\\
&=q^{a-1}f(a)\displaystyle\sum^a_{i=0}(-1)^i\left[\begin{matrix}a\\i\end{matrix}\right]_qq^{\binom{i-(a-1)}{2}}\\
&=0,
\end{aligned}
\end{equation*}
where the second to last equality comes from similar manipulations of the exponent of $q$ as was done for the $a<b$ case, and the last equality coming from an application of Lemma \ref{lem:BCJ_conj_2}.
\end{proof}

We now immediately obtain the following corollary.

\begin{corollary}\label{cor:alphabetainv}
   Let $a,b,c,d,u,v$ be non-negative integers. Then
   \begin{equation*}
    \displaystyle\sum_{c=0}^a\sum_{d=0}^b\alpha_q(a,b;c,d)\beta_q(c,d;u,v)=
    \delta_{a}^{u}\delta_{b}^{v}.
    \end{equation*}
    In particular, for any $P \in \Z[x,y]$, we have
    \[
       \alpha_q * (\beta_q * P) = P = \beta_q* (\alpha_q * P).
    \]

\end{corollary}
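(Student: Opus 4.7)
The plan is to show that the corollary is essentially a bookkeeping consequence of Theorem~\ref{thm:rank_to_tutte}, together with a short linear-algebra argument to promote the combinatorial identity into the operator statement. First I would observe the support constraints: $\alpha_q(a,b;c,d)=0$ whenever $c>a$ or $d>b$, and $\beta_q(c,d;e,f)=0$ whenever $e>c$ or $f>d$. Consequently the nonzero terms of the double sum lie in the range $e\leq c\leq a$, $f\leq d\leq b$, so the sum automatically vanishes when $e>a$ or $f>b$, matching $\delta_a^e\delta_b^f=0$ in that case.

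Assuming then $e\leq a$ and $f\leq b$, I would reindex by $c=e+i$ and $d=f+j$ with $0\leq i\leq a-e$, $0\leq j\leq b-f$. The definition of $N_{i,j}(\cdot,\cdot)$ gives exactly
\[
\alpha_q(a,b;c,d)\beta_q(c,d;e,f) \;=\; \qbin{a}{e}{q}\qbin{b}{f}{q}\,N_{i,j}(a-e,b-f),
\]
so pulling the common nonzero factor out of the sum and invoking Theorem~\ref{thm:rank_to_tutte} yields
\[
\sum_{c=0}^{a}\sum_{d=0}^{b}\alpha_q(a,b;c,d)\beta_q(c,d;e,f) \;=\; \qbin{a}{e}{q}\qbin{b}{f}{q}\,\delta_0^{a-e}\delta_0^{b-f} \;=\; \delta_a^e\delta_b^f,
\]
which is the first identity.

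For the operator statement, I would note that the map $T_\gamma:f\mapsto \gamma*f$ is $\Z$-linear in $f$ by direct inspection of the definition, and that both $T_{\alpha_q}$ and $T_{\beta_q}$ preserve every finite-dimensional subspace $V_{a,b}\subset\Z[x,y]$ consisting of polynomials of $x$-degree at most $a$ and $y$-degree at most $b$, since the inner sums in the definition of $\gamma*f$ are truncated by the input exponents. Reading off coefficients, the identity already established translates into the statement that the coefficient of $x^e y^f$ in $\beta_q*(\alpha_q* x^a y^b)$ is $\delta_a^e\delta_b^f$; hence $T_{\beta_q}\circ T_{\alpha_q}$ is the identity on monomials, and by linearity on all of $\Z[x,y]$. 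Restricting to $V_{a,b}$ gives $T_{\beta_q}T_{\alpha_q}|_{V_{a,b}}=\mathrm{id}$, and since $V_{a,b}$ is finite-dimensional a one-sided inverse is automatically a two-sided inverse, so $T_{\alpha_q}T_{\beta_q}|_{V_{a,b}}=\mathrm{id}$ as well. Taking the union over all $(a,b)$ gives $\alpha_q*(\beta_q*f)=f$ on all of $\Z[x,y]$, completing the proof.

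The main substantive obstacle was already resolved in Theorem~\ref{thm:rank_to_tutte}; the remaining work is a reindexing and the routine promotion of a one-sided inverse to a two-sided one on each graded piece. The only point requiring mild care is ensuring that the same computation is not mistakenly read as giving both compositions directly, since the identity proved symbolically yields only one of the two orderings; the finite-dimensional invariance of $V_{a,b}$ is what closes that gap.
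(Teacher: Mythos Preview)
Your proposal is correct and follows essentially the same route as the paper: the reindexing $c=e+i$, $d=f+j$ and the factoring out of $\qbin{a}{e}{q}\qbin{b}{f}{q}$ is exactly the setup the paper lays out immediately before Theorem~\ref{thm:rank_to_tutte}, and the corollary is then stated as an immediate consequence. Your finite-dimensionality argument on $V_{a,b}$ to upgrade the one-sided inverse to a two-sided one is a clean addition; the paper simply asserts both identities $\alpha_q*(\beta_q*f)=f=\beta_q*(\alpha_q*f)$ without further comment, so you have filled in a detail the paper leaves implicit.
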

\begin{remark}
The mutually inverse transformations $\alpha$ and $\beta$ are indeed $q$-analogues of the classical transformations associated with the rank generating polynomial and Tutte polynomial of a matroid.
Let $\displaystyle P(x,y)=\sum_{i=0}^u \sum_{j=0}^v p_{i,j} x^i y^j \in \Z[x,y]$. 
If we set $q=1$ in $\alpha_q$ and $\beta_q$ we obtain:
\[
    \alpha_1(i,j;a,b):=\binom{i}{a}\binom{j}{b}
    \text{ and }
    \beta_1(i,j;a,b):=(-1)^{i-a}(-1)^{j-b}\binom{i}{a}\binom{j}{b}.
    \]
    Therefore,
   \[
   \alpha_1*P:=\sum_{i=0}^u \sum_{j=0}^v p_{i,j} \sum_{a=0}^{i}\sum_{b=0}^{j} \binom{i}{a}\binom{j}{b}x^ay^b =\sum_{i=0}^u \sum_{j=0}^v p_{i,j}(x+1)^a(y+1)^b.
   \]
    and 
   \[
   \beta_1*P:=\sum_{i=0}^u \sum_{j=0}^v p_{i,j} \sum_{a=0}^{i}\sum_{b=0}^{j} (-1)^{i-a}(-1)^{j-b}\binom{i}{a}\binom{j}{b}x^ay^b =\sum_{i=0}^u \sum_{j=0}^v p_{i,j}(x-1)^a(y-1)^b.
   \]  
\end{remark}


\section{The rank generating polynomial and the Tutte polynomial}\label{sec:ranktutte}

One of the fundamental properties of the Tutte polynomial in classical matroid theory is its connection to the rank generating polynomial. In this section we establish the connection between the rank generating polynomial of $M$ and $\tau_{\mP}(M;x,y)$ for any Tutte partition $\mP$. Explicitly, we will show that the rank generating polynomial is given by $\alpha_q * \tau_{\mP}(M;x,y)$ and conversely, any $\tau_{\mP}(M;x,y)$ can be derived by applying the transformation $\beta$ to the rank generating polynomial of $M$. This leads to another important result of this section, Corollary~\ref{cor:T-poly_indep_of_choice}, which gives that $\tau_{\mP}(M;x,y)$ is uniquely determined and independent of the choice of Tutte partition $\mP$.

\begin{definition}\label{def:rank_poly}
The {\em rank generating polynomial} 
of $M$ is defined to be 
$$\rho(M;x,y):=\displaystyle\sum_{z\in\mathcal{L}}x^{r({\bf1})-r(z)}y^{\nu(z)} \in \Z[x,y].$$
\end{definition}

We remark that a $4$-variable definition of the rank generating polynomial was given in \cite{shiromoto19}, where it was used to obtain a generalisation of Greene's theorem.
We will use the following result (see \cite[Equation (2)]{BCJ17}). We include the proof for completeness. This result will allow us to deduce that the rank generating polynomial of a prime-free matroid or $q$-matroid depends only on its rank and nullity, which is perhaps not surprising in the light of Lemma \ref{lem:clopen_properties}. Note that the \emph{rank-lack} of $x\in\mL$ is defined to be $r({\bf1})-r(x)$. We recall the notion of \emph{a relatively complemented} lattice, which means that for all intervals $[a,b]\subseteq\mathcal{L}$ and $x\in[a,b]$, there exists $y\in[a,b]$ such that $a=x\wedge y$ and $b=x\vee y$. It is well known that $\mathcal{L}$ is relatively complemented. This notion is used in the following proof.

\begin{proposition}[\cite{BCJ17}]\label{prop:p-free_elements}
Let $M$ be prime-free. Let $\h(\one)=n$, $r({\bf1})=\rho$, and $\nu({\bf1})=\nu$. The number of elements in $\mathcal{L}$ with rank-lack $i$ and nullity $j$ is $\alpha_q(\rho,\nu;i,j)$. 
\end{proposition}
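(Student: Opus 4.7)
The plan is to exploit the totally clopen element $z \in \mathcal{L}$ guaranteed by Theorem~\ref{th:no_prime_from_clopen}, together with Theorem~\ref{thm:tingley} on complements of subspaces, to reduce the count to a product of $q$-binomials.

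First I would derive explicit formulas for $r(x)$ and $\nu(x)$ in terms of the meet and join of $x$ with $z$. For any $x \in \mathcal{L}$, choose a maximal chain from $\zero$ to $x$ and classify each cover in it via Lemma~\ref{lem:cover_or_eq}: each cover either projects up to a cover in $[z, x\vee z]$ or projects down to a cover in $[\zero, x\wedge z]$. By Lemma~\ref{lem:clopen_properties} (together with Lemma~\ref{lem:green_red_proj}), covers of the first type have weight $1$ and those of the second type have weight $0$. Summing weights along the chain yields
\[
r(x) = h(x \vee z) - h(z), \qquad \nu(x) = h(x \wedge z),
\]
which is consistent with modularity since $(h(x \vee z) - h(z)) + h(x \wedge z) = h(x)$. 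Using Lemma~\ref{lem:clopenfact}, so that $h(z) = \nu$ and $r(\one) = \rho$, the conditions ``rank-lack $i$, nullity $j$'' translate to $h(x \vee z) = n - i$ and $h(x \wedge z) = j$.

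Next I would parameterize each such $x$ by the pair $(u,v) := (x \vee z, x \wedge z)$ with $v \leq z \leq u$, $h(u) = n - i$, $h(v) = j$. Counting admissible $u$ amounts to counting height-$(\rho - i)$ elements of the rank-$\rho$ interval $[z, \one]$, giving $\qbin{\rho}{i}{q}$; counting admissible $v$ amounts to counting height-$j$ elements of $[\zero, z]$, giving $\qbin{\nu}{j}{q}$. For a fixed pair $(u,v)$ the preimages are exactly those $x \in [v,u]$ satisfying $x \vee z = u$ and $x \wedge z = v$, that is, the lattice-complements of $z$ within the interval $[v, u]$. In the $q$-matroid case this interval is the subspace lattice of an $\F_q$-vector space of dimension $(n - i) - j$ in which $z$ corresponds to a subspace of dimension $\nu - j$, so Theorem~\ref{thm:tingley} gives exactly $q^{(\nu - j)(\rho - i)}$ complements; in the Boolean case the complement is unique, consistent with the $q = 1$ specialization. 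Multiplying the three factors produces $\alpha_q(\rho, \nu; i, j)$.

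The only mildly delicate point is verifying that the assignment $x \mapsto (x \vee z, x \wedge z)$ is a bijection from the set being counted onto the parameterized set of complements, which is immediate once the defining conditions on $x$ are imposed. I do not expect a substantive obstacle: the entire argument hinges on the rank/nullity decomposition supplied by the totally clopen element, after which the count becomes standard modular-lattice combinatorics.
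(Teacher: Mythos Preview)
Your proposal is correct and follows essentially the same route as the paper: both arguments use the totally clopen element $z$, translate ``rank-lack $i$, nullity $j$'' into the conditions $h(x\vee z)=n-i$ and $h(x\wedge z)=j$, and then count by first choosing the pair $(x\wedge z,\,x\vee z)$ and then the complements of $z$ in that interval via Theorem~\ref{thm:tingley}. Your derivation of $r(x)=h(x\vee z)-h(z)$ by classifying covers along a chain is slightly more explicit than the paper's (which instead observes that $x$ is a basis of the minor $[x\wedge z,\,x\vee z]$), but the substance is identical.
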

\begin{proof}
    Let $c$ be the unique totally clopen element in $\mathcal{L}$, which has height $\nu=n-\rho$. Let $[a,b]\subseteq\mathcal{L}$ be such that $c\in[a,b]$ and let $\text{h}(a)=j\leq\nu$ and let $\text{h}(b)=n-i$ for $i\leq\rho$. Since $\mathcal{L}$ is relatively complemented, there exists $x\in[a,b]$ such that $a=x\wedge c$ and $b=x\vee c$. Since $\mathcal{L}$ is prime-free, we have that $[a,b]$ is prime-free with $c$ as its unique totally clopen element. By Lemma~\ref{lem:clopen_properties} we have that $x$ is a basis of the minor $M([a,b])$. Since $r(x\wedge c)=0$ and $\text{h}(x)-\text{h}(x\wedge c)=\text{h}(x\vee c)-\text{h}(c)$, we have that
    \begin{align}\label{eq:p-free_interval}
        r(x)&=r(x\wedge c)+r_{[x\wedge c,x\vee c]}(x)=r_{[x\wedge c,x\vee c]}(x)=\text{h}(x\vee c)-\text{h}(c)\nonumber\\
        &=n-i-\nu=n-i-(n-\rho)=\rho-i.
    \end{align}
    This means that $x$ has rank-lack $i$ in $M$. Clearly, $\nu(x)=\text{h}(x\wedge c)=j$.
    
    For any $x\in\mathcal{L}$, it is clear that $x$ is a relative complement of $c$ in the interval $[x\wedge c,x\vee c]$. Therefore, by (\ref{eq:p-free_interval}), the rank-lack and nullity of $x$ in $M$ are fully determined by $\text{h}(x\wedge c)$ and $\text{h}(x\vee c)$.

    To count the number of elements of $\mathcal{L}$ with rank-lack $i$ and nullity $j$, it thus suffices for us to compute 
    \[
    |\{(x,a,b):c \in [a,b], \h(a)=j, \h(b)=n-i, x \wedge c = a, x \vee c =b \}|.
    \]
    The number of elements $a\leq c$ with $\text{h}(a)=j$ is the number of elements of height $j$ in the interval $[\zero,c]$, which is $\qbin{\text{h}(c)}{j}{q}=\qbin{\nu}{j}{q}$. The number of elements $b\geq c$ with $\text{h}(b)=n-i$ is the number of elements of height $n-i-\text{h}(c)$ in the interval $[c,\one]$, which is $\qbin{n-\nu}{n-\nu-i}{q}=\qbin{\rho}{\rho-i}{q}=\qbin{\rho}{i}{q}$. 
    By Theorem~\ref{thm:tingley}, for any such interval $[a,b]$, the number of elements $x\in[a,b]$ such that $a=x\wedge c$ and $b=x\vee c$ is $q^{(\nu-j)(n-i-j-(\nu-j))}=q^{(\nu-j)(\rho-i)}$. Therefore, the number of elements in $\mathcal{L}$ with rank-lack $i$ and nullity $j$ is given by $\qbin{\rho}{i}{q}\qbin{\nu}{j}{q}q^{(\nu-j)(\rho-i)}$, which is equal to $\alpha_q(\rho,\nu;i,j)$.
\end{proof}

\begin{corollary}\label{cor:alpha_rank_poly_p-free}
Let $M$ be a prime-free and have rank $\rho$ and nullity $\nu$. Then the rank generating polynomial of $M$ is:
$$\rho(M;x,y)=\displaystyle\sum_{i=0}^{\rho}\sum_{j=0}^{\nu}\alpha_q(\rho,\nu;i,j)x^iy^j.$$
\end{corollary}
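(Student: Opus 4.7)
The plan is to rewrite the rank polynomial in Definition~\ref{def:rank_poly} as a sum over pairs $(i,j)$ corresponding to the possible values of rank-lack $r(\one)-r(z)$ and nullity $\nu(z)$ of lattice elements $z \in \mL$, and then invoke the counting result of Proposition~\ref{prop:p-free_elements} directly.

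More explicitly, I would first observe that for any $z \in \mL$, axiom (R1)--(R2) for a matroidal rank function, together with $r(\one)=\rho$ and $\nu(\one)=\nu$, force $0 \leq r(\one)-r(z) \leq \rho$ and $0 \leq \nu(z) \leq \nu$. Thus the sum
\[
\rho(M;x,y)=\sum_{z\in\mL}x^{r(\one)-r(z)}y^{\nu(z)}
\]
may be regrouped as
\[
\rho(M;x,y)=\sum_{i=0}^{\rho}\sum_{j=0}^{\nu}\bigl|\{z\in\mL:r(\one)-r(z)=i,\ \nu(z)=j\}\bigr|\,x^iy^j.
\]
Since $M$ is prime-free by hypothesis, Proposition~\ref{prop:p-free_elements} applies and gives that the cardinality appearing as the coefficient of $x^iy^j$ is exactly $\alpha_q(\rho,\nu;i,j)$. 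Substituting this into the above display yields the claimed formula.

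There is essentially no obstacle here: the entire combinatorial content has already been packaged into Proposition~\ref{prop:p-free_elements}, and the statement is a direct bookkeeping consequence of that proposition together with the definition of the rank polynomial.
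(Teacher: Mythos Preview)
Your proposal is correct and is exactly the argument the paper intends: the corollary is stated immediately after Proposition~\ref{prop:p-free_elements} with no separate proof, so the paper is treating it as the direct regrouping-and-substitution you describe. One tiny remark: the bound $\nu(z)\le\nu(\one)$ is not literally a consequence of (R1)--(R2) alone but uses that rank increases by at most one along a cover (equivalently, that the nullity function is monotone), which is standard and implicit in the matroidal setting.
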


\begin{proposition}\label{thm:Tutte-to-rank_convo}\footnote{The statement of this proposition appeared as part of \cite[Theorem 3]{BCJ17}. However, the statement does not hold for arbitrary prime-free minors. We refer the reader to \cite[Section 10]{BCJ17} for a discussion on this matter.}
Let $\mathcal{P}$ be a Tutte partition of $M$ and let $\tau_{\mathcal{P}}(M;x,y)=\displaystyle\sum_{i=0}^{\rho}\sum_{j=0}^{\nu}\tau_{i,j}x^iy^j$. Then the rank generating polynomial of $M$ is:
\[
\rho(M;x,y)= \alpha_q * \tau_{\mP}(M;x,y).
\]
\end{proposition}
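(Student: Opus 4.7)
The plan is to unfold both sides in terms of the partition $\mathcal{P}$ and then reduce the statement to the prime-free case already handled by Corollary~\ref{cor:alpha_rank_poly_p-free}. Since every block $A=[a_A,b_A]\in\mathcal{P}$ supports a prime-free minor $M(A)$ (by the third axiom of Definition~\ref{def:Tutte_partition}), Corollary~\ref{cor:alpha_rank_poly_p-free} gives, for each such block,
\[
\rho(M(A);x,y)=\sum_{i=0}^{r_A(\mathbf{1}_A)}\sum_{j=0}^{\nu_A(\mathbf{1}_A)}\alpha_q\!\bigl(r_A(\mathbf{1}_A),\nu_A(\mathbf{1}_A);i,j\bigr)x^{i}y^{j}.
\]
By the very definition of $\alpha_q * \tau_{\mathcal{P}}(M;x,y)$, summing these expressions over $A\in\mathcal{P}$ reproduces $\alpha_q*\tau_{\mathcal{P}}(M;x,y)$ term by term. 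So it is enough to prove
\[
\rho(M;x,y)=\sum_{A\in\mathcal{P}}\rho(M(A);x,y).
\]

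The central technical step, and the main obstacle, is to verify that the \emph{local} exponents appearing in $\rho(M(A);x,y)$ coincide with the \emph{global} ones appearing in $\rho(M;x,y)$. Explicitly, I will show that for every $z\in A=[a_A,b_A]$,
\[
r_A(\mathbf{1}_A)-r_A(z)=r(\mathbf{1})-r(z),\qquad \nu_A(z)=\nu(z).
\]
Here the Tutte partition axioms are essential: condition (1), $r(a_A)=h(a_A)$, forces $\nu(a_A)=0$, so $\nu_A(z)=\nu(z)-\nu(a_A)=\nu(z)$; condition (2), $r(b_A)=r(\mathbf{1})$, gives $r_A(\mathbf{1}_A)=r(b_A)-r(a_A)=r(\mathbf{1})-r(a_A)$, and combining with $r_A(z)=r(z)-r(a_A)$ yields $r_A(\mathbf{1}_A)-r_A(z)=r(\mathbf{1})-r(z)$. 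Thus
\[
\rho(M(A);x,y)=\sum_{z\in A} x^{r_A(\mathbf{1}_A)-r_A(z)}y^{\nu_A(z)}=\sum_{z\in A}x^{r(\mathbf{1})-r(z)}y^{\nu(z)}.
\]

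Finally, since $\mathcal{P}$ is a partition of $\mathcal{L}$, summing over $A\in\mathcal{P}$ aggregates the contributions of every $z\in\mathcal{L}$ exactly once, giving $\sum_{z\in\mathcal{L}}x^{r(\mathbf{1})-r(z)}y^{\nu(z)}=\rho(M;x,y)$ by Definition~\ref{def:rank_poly}. Chaining the identities proves the proposition. I expect the only subtlety to be the careful bookkeeping in the first step, making sure that the inner double sum defining $\alpha_q * \tau_{\mathcal{P}}$ indeed matches, summand-by-summand, the expression provided by Corollary~\ref{cor:alpha_rank_poly_p-free} applied to the prime-free minor on each block; this is immediate once one reads off $\rho=r_A(\mathbf{1}_A)$ and $\nu=\nu_A(\mathbf{1}_A)$ in the corollary.
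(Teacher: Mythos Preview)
Your proof is correct and follows essentially the same approach as the paper's own argument: apply Corollary~\ref{cor:alpha_rank_poly_p-free} to each prime-free block, use the Tutte partition axioms $r(a_A)=h(a_A)$ and $r(b_A)=r(\mathbf{1})$ to identify the local rank-lack and nullity with the global ones, and then sum over the partition. The paper presents these steps in a single chain of equalities rather than first isolating the reduction to $\rho(M;x,y)=\sum_{A\in\mathcal{P}}\rho(M(A);x,y)$, but the content is identical.
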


\begin{proof}
By definition, we have \[\tau_\mP(M;x,y)=\displaystyle\sum_{A\in\mathcal{P}}
x^{r_A(A)}y^{\nu_A(A)} = \sum_{i=0}^{\rho}\sum_{j=0}^{\nu}\tau_{i,j}x^iy^j,\] 
which means that $\tau_{i,j}$ denotes the number of elements of $\mathcal{P}$ with rank $i$ and nullity $j$. Let $A \in \mP$. By Corollary~\ref{cor:alpha_rank_poly_p-free}, we have that $\rho(A;x,y)=\displaystyle\sum_{a=0}^{r(A)}\sum_{b=0}^{\nu(A)}\alpha_q(r_A(A),\nu_A(A);a,b)x^ay^b$. 

Since $\mP$ is a Tutte partition of $M$, we have $r(\zero_A)=\h(\zero_A)$ and $r(\one_A)=r({\bf1})$. For any $z\in A$ we have that $r_{A}(\one_A)-r_{A}(z)=r({\bf1})-r(z)$ and $\nu_{A}(z)=\nu(z)$. Therefore,
\begin{align*}
    \rho(M;x,y)&=\displaystyle\sum_{z\in\mL}x^{r({\bf1})-r(z)}y^{\nu(z)}\\
    &=\displaystyle\sum_{A\in\mathcal{P}}\sum_{z\in A}x^{r({\bf1})-r(z)}y^{\nu(z)}\\
    &=\displaystyle\sum_{A\in\mathcal{P}}\sum_{z\in A}x^{r_A(\one_A)-r_A(z)}y^{\nu_A(z)}\\
    &=\displaystyle\sum_{A\in\mathcal{P}}\rho(A;x,y)\\
    &=\displaystyle\sum_{A\in\mathcal{P}}\sum_{a=0}^{r_A(\one_A)}\sum_{b=0}^{\nu_A(\one_A)}\alpha_q(r_A(\one_A),\nu_A(\one_A);a,b)x^ay^b\\
    &=\displaystyle\sum_{i=0}^{\rho}\sum_{j=0}^{\nu}\tau_{i,j}\displaystyle\sum_{a=0}^{i}\sum_{b=0}^{j}\alpha_q(i,j;a,b)x^ay^b.
\end{align*}
The result now follows.
\end{proof}

From Corollary \ref{cor:alphabetainv}, we obtain that for any Tutte partition $\mP$, $\tau_{\mP}(M;x,y)$ is determined by the rank generating polynomial of $M$.

\begin{corollary} \label{cor:T-poly_indep_of_choice}
Let $\mP$ be a Tutte partition of $M$.
Then 
\[
\beta_q * \rho(M;x,y) = \tau_{\mP}(M;x,y).
\]
In particular, the Tutte polynomial $\tau_{\mP}(M;x,y)$ of $M$ is independent of the choice of Tutte partition $\mP$.
\end{corollary}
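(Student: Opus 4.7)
The plan is to combine the two key results established just prior: Proposition~\ref{thm:Tutte-to-rank_convo}, which asserts that $\rho(M;x,y) = \alpha_q * \tau_{\mP}(M;x,y)$ for any Tutte partition $\mP$, together with the inversion identity of Corollary~\ref{cor:alphabetainv}, which gives that the operators $\alpha_q$ and $\beta_q$ are mutually inverse on $\Z[x,y]$. The entire statement will then drop out as a one-line consequence, so the proof is essentially bookkeeping rather than genuinely computational.

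More concretely, I would first invoke Proposition~\ref{thm:Tutte-to-rank_convo} to write $\rho(M;x,y) = \alpha_q * \tau_{\mP}(M;x,y)$. Applying the operator $\beta_q$ to both sides and using Corollary~\ref{cor:alphabetainv} (in the form $\beta_q * (\alpha_q * f) = f$) with $f = \tau_{\mP}(M;x,y)$ yields $\beta_q * \rho(M;x,y) = \tau_{\mP}(M;x,y)$, which is the claimed identity.

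For the independence statement, observe that the right-hand side $\beta_q * \rho(M;x,y)$ is defined purely in terms of the rank polynomial of $M$, which by Definition~\ref{def:rank_poly} depends only on the matroidal rank function $r$ and the support lattice $\mL$, and in particular is wholly independent of the choice of Tutte partition $\mP$. Hence if $\mP$ and $\mP'$ are two Tutte partitions of $M$ (both of which exist by Theorem~\ref{thm:T-comp_partition_exist}), we have $\tau_{\mP}(M;x,y) = \beta_q * \rho(M;x,y) = \tau_{\mP'}(M;x,y)$, giving the independence.

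I do not anticipate a genuine obstacle here, since all the real work has already been done in proving the inversion formula (Theorem~\ref{thm:rank_to_tutte}) and in establishing the convolution identity for $\rho$ in Proposition~\ref{thm:Tutte-to-rank_convo}. The only thing worth flagging is to ensure that Corollary~\ref{cor:alphabetainv} is being applied in the correct direction, namely $\beta_q * (\alpha_q * f) = f$ rather than its companion $\alpha_q * (\beta_q * f) = f$; both hold, but the former is what is needed to recover $\tau_{\mP}$ from $\rho$.
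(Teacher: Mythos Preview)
Your proposal is correct and follows essentially the same approach as the paper's proof: apply $\beta_q$ to both sides of the identity $\rho(M;x,y) = \alpha_q * \tau_{\mP}(M;x,y)$ from Proposition~\ref{thm:Tutte-to-rank_convo}, invoke the inversion formula of Corollary~\ref{cor:alphabetainv}, and then observe that independence of $\mP$ follows from the uniqueness of the rank polynomial.
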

\begin{proof}
From Proposition~\ref{thm:Tutte-to-rank_convo} and Corollary \ref{cor:alphabetainv}
we have
\begin{align*}
    \rho(M;x,y)&= \alpha_q * \tau_{\mP}(M;x,y),\\
    \implies \beta_q *\rho(M;x,y) &= \beta_q *\alpha_q * \tau_{\mP}(M;x,y) = \tau_{\mP}(M;x,y).
\end{align*}
That $\tau_{\mP}(M;x,y)$ is independent of the choice of $\mP$ now follows immediately by the uniqueness of the rank generating polynomial $\rho(M;x,y)$.
\end{proof}
Therefore, for the remainder of this paper, we dispense with the explicit reference to a Tutte-partition $\mP$ and simply write $\tau(M;x,y)$.

   If we specialize to the case for which $M$ is a matroid, we see that the Tutte polynomial as defined in this paper coincides with the Tutte polynomial as defined by Crapo in \cite{C69}, albeit defined in a different way. For this reason we will for the remainder of this paper, call $\tau(M;x,y)$ the Tutte polynomial of $M$, for both the matroid and $q$-matroid case. We will revisit the matroid case in Section \ref{sec:return_to_matroid_case}.



\begin{example}\label{ex:Tutte_vs_rank_figures}
In Figure~\ref{fig:rank_and_tutte} we compute the Tutte polynomial and rank generating polynomial of some of the length-$3$ $q$-matroids listed in the appendix of \cite{dir_sum_jur_cer}.
 On the left hand side are the support lattices of the $q$-matroids, 
 assigned a bicolouring consistent with its matroidal $\{0,1\}$-weighting: weight-$0$ covers are coloured green and weight-$1$ covers are coloured red. 
 On the right-hand-side are corresponding Tutte partitions of each $q$-matroid. 
 Under each support lattice the rank generating polynomial of that $q$-matroid is written, and under each Tutte partition is its Tutte polynomial.
\vspace{0.1cm}

\begin{figure}[h]
    \centering
    \includegraphics[scale=1]{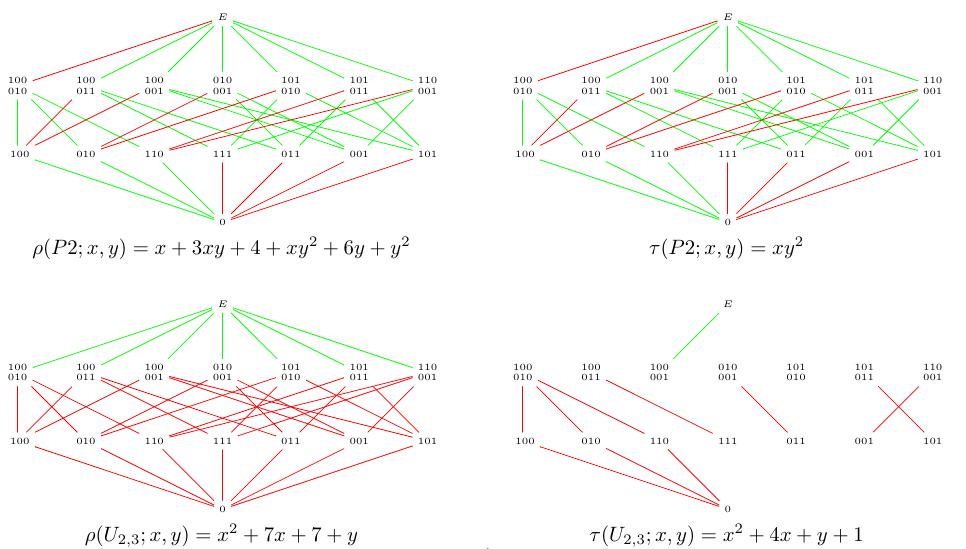}
    \caption{Rank and Tutte Polynomials}
    \label{fig:rank_and_tutte}
\end{figure}
\end{example}

\begin{example}
Using the same naming convention as in Example~\ref{ex:Tutte_vs_rank_figures}, we have 
$$\rho(P2;x,y)=\displaystyle\sum_{a=0}^{\rho}\sum_{b=0}^{\nu}\rho_{a,b}x^ay^b=x+3xy+4+xy^2+6y+y^2.$$ By Theorem~\ref{thm:rank_to_tutte},
\begin{align*}
    \tau(P2;x,y)&=\displaystyle\sum_{a=0}^{\rho}\sum_{b=0}^{\nu}\rho_{a,b}\sum_{i=0}^a\sum_{j=0}^b\beta_q(a,b;i,j)x^iy^j\\
    &=(x-1)+3(-x-y+xy)+4+(1+2x-y^2-3xy+xy^2)\\
    &\hspace{1cm}+6(-1+y)+(2-3y+y^2)\\
    &=xy^2,
\end{align*}
which of course agrees with the lattice-theoretic computation of $\tau(P2;x,y)$.
\end{example}


\section{$q$-Tutte-Grothendieck invariants}\label{sec:q-T-G_invar}

In the classical theory of matroids, the notion of a Tutte-Grothendieck (T-G) invariant is fundamental to the study of the Tutte polynomial \cite{brylawski_oxley_1992,brylowski_ams,brylowski_alguniv}. A T-G invariant is based upon a category theoretic construction by Brylawski, which is based upon a ring theoretic construction given by Tutte \cite{tutte_1947}. Such a construction cannot apply to $q$-matroids as it is too strict. The main contribution of this section is a weakening of this construction, which we call a \emph{$q$-T-G invariant}, in Definition~\ref{def:qTG} (in fact, we show in Section~\ref{subsec:T-G_invar} that in the case of matroids both constructions are equivalent). The main result of Section \ref{sec:q-T-G_invar}, Theorem~\ref{thm:q-T-G_Tutte_poly}, provides a definition of the Tutte polynomial for both matroids and $q$-matroids alike, analogous to Brylawski's definition. In Section~\ref{subsec:rank_poly}, we provide some results describing how the rank generating polynomial decomposes and is not a $q$-T-G invariant, which contrasts to the case for matroids.
Explicitly, in \cite[Section 6.1]{brylawski_oxley_1992} the notion of a T-G invariant is defined as follows. Recall that $M(e)$ is the restriction of $M$ to the interval $[\zero,e]$ for an atom $e\in\mathcal{L}$.

\begin{definition}\label{def:classicTG}
A \emph{T-G invariant} is a function $f$ defined on the class of matroids that satisfies the following three properties:
\makeatletter
\def\namedlabel#1#2{\begingroup
    #2%
    \def\@currentlabel{#2}%
    \phantomsection\label{#1}\endgroup
}
\makeatother
\begin{description}
    \item[\namedlabel{tg1}{{\rm (TG1)}}] For any pair of equivalent matroids, we have $f(M)=f(M')$.
    \item[\namedlabel{tg2}{{\rm (TG2)}}] If $e$ is a loop or an isthmus of $M$ then $f(M)=f(M(e))f(M-e)$. 
    \item[\namedlabel{tg3}{{\rm (TG3)}}] For any atom $e$ that is neither a loop nor an isthmus in $M$, we have $$f(M)=f(M-e)+f(M/e).$$ 
\end{description}
\end{definition}

In \cite[Theorem 6.2.2]{brylawski_oxley_1992} the Tutte polynomial is defined in terms of the T-G invariant properties as follows.
\begin{theorem}[Brylawski, 1972]
The Tutte polynomial is the unique function $\tau$ from the class of matroids to the ring $\mathbb{Z}[x,y]$ satisfying the following properties:
\begin{enumerate}
    \item $\tau(M(e))=x$ if $e$ is independent, and $\tau(M(e))=y$ if $e$ is a loop,
    \item $\tau$ is a T-G invariant.
\end{enumerate}
\end{theorem}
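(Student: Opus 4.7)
The plan is a two-part strategy: verify that the Tutte polynomial as constructed in Section~\ref{sec:q-Tutte_partition} satisfies property (1) and the three T-G axioms, and then establish uniqueness of such a function by induction on the size of the ground set.

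For uniqueness, set $M=(\mL(S),r)$ and induct on $|S|$. The base case $|S|=1$ gives a matroid that is either a single loop or a single isthmus, in which case $f(M)$ is uniquely determined by (1). For the inductive step with $|S|\geq 2$: if $M$ has a loop or isthmus $e$, then (TG2) gives $f(M)=f(M(e))f(M-e)$, where $f(M(e))$ is fixed by (1) and $f(M-e)$ is known by induction; otherwise, any atom $e$ is neither a loop nor an isthmus, and (TG3) yields $f(M)=f(M-e)+f(M/e)$, with both summands determined by induction. Thus $f$ is completely determined by (1) and (2), and must agree with $\tau$ whenever $\tau$ itself satisfies these properties.

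For existence, property (1) follows directly from Definition~\ref{def:q-Tutte_poly}: the only Tutte partition of a single-element matroid $M(e)$ is $\{[\zero,e]\}$, contributing $x^{r(e)}y^{h(e)-r(e)}$, which equals $x$ for an isthmus and $y$ for a loop. Axiom (TG1) holds since a lattice isomorphism carries Tutte partitions to Tutte partitions and preserves the rank and nullity of each interval. For (TG2) and (TG3) I would invoke Corollary~\ref{cor:T-poly_indep_of_choice}, which permits a favorable choice of Tutte partition. In the matroid case $q^{n-1}-1=0$, so the minimal partition $\mathcal{S}(e)$ of Lemma~\ref{prop:lattice_partition} collapses to $\{[\zero,e^c],[e,\one]\}$, the deletion-contraction split. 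For (TG3) with $e$ neither a loop nor an isthmus, the covers $[\zero,e]$ and $[e^c,\one]$ have weights $1$ and $0$ respectively, and a Tutte partition of $M-e$ inside $[\zero,e^c]$ together with a Tutte partition of $M/e$ inside $[e,\one]$ concatenate into a Tutte partition of $M$; summing the monomial contributions yields $\tau(M)=\tau(M-e)+\tau(M/e)$. For (TG2) with $e$ a loop, each block $[a,b]$ of a Tutte partition of $M-e$ lifts to $[a,b\cup\{e\}]$ in $M$, preserving rank while incrementing nullity by one and thus multiplying the polynomial by $y$; the isthmus case is dual via matroid duality.

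The main obstacle will be the detailed verification, in (TG2) and (TG3), that the combined or lifted interval families are genuine Tutte partitions of $M$ rather than merely interval partitions of $\mL(S)$. Specifically, one must check (i) absence of prime diamonds in each extended block, which uses Lemma~\ref{lem:green_red_proj} together with the determined weights of the covers $[\zero,e]$, $[e^c,\one]$, and of any cover adjoining a loop or isthmus; and (ii) persistence of the boundary conditions $r(\zero_A)=h(\zero_A)$ and $r(\one_A)=r(M)$ from Definition~\ref{def:Tutte_partition} after the extension. These checks are routine but constitute the technical core of the existence half of the proof.
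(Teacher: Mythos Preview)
The paper does not itself prove this statement; it is quoted as a classical result of Brylawski, cited from \cite{brylawski_oxley_1992}. It can, however, be recovered from the paper's own contributions by combining Proposition~\ref{prop:Tutte_poly_is_T-G_invar} and Theorem~\ref{thm:q-T-G_Tutte_poly} (the Tutte polynomial is the unique $q$-T-G invariant with the given boundary values) with Theorem~\ref{thm:T-G_P_equivalence} (for matroids, $q$-T-G and classical T-G invariance coincide).

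Your direct argument is correct and, for the matroid-only setting, more economical than that indirect route. The uniqueness induction is the same as in Theorem~\ref{thm:q-T-G_Tutte_poly}. For existence you verify (TG2)--(TG3) by manipulating Tutte partitions directly --- concatenating partitions of $M-e$ and $M/e$ for (TG3), and extending each block by the loop or isthmus for (TG2) --- thereby bypassing the nontrivial inductive step in the proof of Theorem~\ref{thm:T-G_P_equivalence} (where one must show that a $q$-T-G invariant satisfies (TG2) even when $M$ contains a prime diamond). The paper's detour through the $q$-T-G framework is justified only by the unified treatment of $q$-matroids; for classical matroids your approach is the natural one. The checks you flag as the technical core are indeed routine: for (TG3) the blocks are unchanged as intervals of $\mL$ and the boundary conditions of Definition~\ref{def:Tutte_partition} transfer because $r(e^c)=r(\one)$ and $r(e)=1$; for (TG2) with $e$ a loop, any diamond in $[a,b\cup\{e\}]$ meeting $e$ has both $e$-covers of weight $0$ and is therefore empty or mixed, never prime.
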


We will define a notion of such invariants in the more general setting of this paper and will show later that 
we recover the original T-G invariants for matroids by letting $q\rightarrow1$.

Recall that if $M$ is not prime-free, by Lemma \ref{lem:disjoint_Tutte_minors} there then exists an atom $e \in \mL$ such that every coloop of $M$ is contained in $[e,\one]$ and every loop of $M$ is contained in $[\zero,e^c]$, which extends to a partition of the form $\mathcal{P}(e)$.

\begin{notation}
  We let $\M$ denote either the class of all $q$-matroids or the class of all matroids.
\end{notation}

\begin{definition}\label{def:qTG}
Let $f$ be a function defined on $\M$. We say that $f$ is a \emph{$q$-Tutte-Grothendieck invariant} ($q$-T-G) if it satisfies the following three properties:
\makeatletter
\def\namedlabel#1#2{\begingroup
    #2%
    \def\@currentlabel{#2}%
    \phantomsection\label{#1}\endgroup
}
\makeatother
\begin{description}
    \item[\namedlabel{qP1}{{\rm ($q$-P1)}}] For all $M_1,M_2 \in \M$, if $M_1 \cong M_2$ then $f(M_1)=f(M_2)$,
    \item[\namedlabel{qP2}{{\rm ($q$-P2)}}] If $M$ is prime-free, then $f(M)=f(M(e))f(M/e)$ for any atom $e\in\mathcal{L}$,
    \item[\namedlabel{qP3}{{\rm ($q$-P3)}}] If $M$ is not prime-free, and $e$ is an atom of $M$ that is independent and contained in every coloop of $M$, then there exists a proper interval decomposition 
    $\mathcal{P}(e)$ such that
    \[f(M)=\sum_{I \in \mathcal{P}(e)}f(M(I))=
    f(M(e^c))+f(M/e)+\displaystyle\sum_{k=1}^{q^{n-1}-1}f(M([e_k,\bar{e}_k])).
    \]
\end{description}
\end{definition}

\begin{proposition}\label{prop:Tutte_poly_is_T-G_invar}
The Tutte polynomial $\tau(M;x,y)$ is a $q$-T-G invariant.
\end{proposition}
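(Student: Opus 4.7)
The plan is to verify the three axioms \ref{qP1}, \ref{qP2}, \ref{qP3} in turn, with the real content concentrated in \ref{qP3}.

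For \ref{qP1} I would argue directly from Corollary~\ref{cor:T-poly_indep_of_choice}: a lattice equivalence $M_1 \cong M_2$ transports the rank function, hence the rank polynomial, and applying $\beta_q$ yields equality of the Tutte polynomials. For \ref{qP2}, when $M$ is prime-free the singleton $\{[\zero,\one]\}$ is itself a Tutte partition, so $\tau(M;x,y) = x^{r(\one)}y^{\nu(\one)}$; since every subinterval of $\mL$ inherits prime-freeness, the same formula applies to $M(e)$ and $M/e$, and the product collapses to $\tau(M;x,y)$ by additivity of rank and nullity along the chain $\zero \leq e \leq \one$.

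The substantive step is \ref{qP3}. Let $e$ be the atom supplied by the hypothesis. First I would choose a complement $e^c$ of $e$ that is a coatom containing $\cl(\zero)$; this is possible because $e$ is independent, so $e \not\leq \cl(\zero)$, and one can pick any hyperplane of the quotient $[\cl(\zero),\one]$ avoiding the image of $e$ (taking $e^c = \cl(\zero)$ in the degenerate case where $h(\one) - h(\cl(\zero)) = 1$). Lemma~\ref{prop:lattice_partition} then supplies a minimal $q$-partition $\mS(e) = \{[e,\one],[\zero,e^c],[e_1,\bar e_1],\dots,[e_{q^{n-1}-1},\bar e_{q^{n-1}-1}]\}$ with each $e_k, \bar e_k$ lying outside $[\zero,e^c] \cup [e,\one]$. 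Applying Theorem~\ref{thm:T-comp_partition_exist} to each minor $M(A)$ for $A \in \mS(e)$ produces a Tutte partition $\mP_A$, and the plan is to verify that the union $\mP := \bigcup_{A \in \mS(e)} \mP_A$ is itself a Tutte partition of $M$. Once this is established, Corollary~\ref{cor:T-poly_indep_of_choice} gives $\tau(M;x,y) = \tau_\mP(M;x,y)$, and grouping the summands in Definition~\ref{def:q-Tutte_poly} according to which $A$ contributed them yields exactly $\tau(M(e^c);x,y) + \tau(M/e;x,y) + \sum_k \tau(M([e_k,\bar e_k]);x,y)$.

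The main obstacle is therefore confirming that $\mP$ is a Tutte partition of $M$. Prime-freeness of each block is inherited from $\mP_A$; what requires care is that the conditions $r(a)=h(a)$ and $r(b)=r(\one)$ in Definition~\ref{def:Tutte_partition} must be read off in $M$ rather than inside the minor $M(A)$. For this I would invoke four facts: $r(e) = 1$, directly from the independence of $e$; $r(e^c) = r(\one)$, since $e^c$ cannot be a coloop (if it were, then $e \leq e^c$ by the hypothesis that $e$ lies under every coloop, contradicting $e \wedge e^c = \zero$); each $e_k$ is not a loop, since $e_k \not\leq e^c \geq \cl(\zero)$; and each $\bar e_k$ is not a coloop, since $e \not\leq \bar e_k$ while $e$ lies under every coloop. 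These four statements say precisely that the bottom of each piece of $\mS(e)$ has rank equal to its height in $M$ and the top has rank equal to $r(\one)$, and the corresponding conditions for each $[a,b] \in \mP$ then transfer from $\mP_A$ to $\mP$ because the minor's rank function differs from $r$ only by a constant on $A$. With this rank/height bookkeeping in place, the remainder of \ref{qP3} is a straightforward reorganisation of the defining sum for $\tau_\mP(M;x,y)$.
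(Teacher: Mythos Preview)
Your proposal is correct and follows essentially the same route as the paper. For \ref{qP3} the paper simply invokes ``as in the proof of Theorem~\ref{thm:T-comp_partition_exist}'' to assert that a Tutte partition $\mP$ refining some $\mS(e)$ exists, and then reads off the decomposition; you instead reconstruct this explicitly by choosing $e^c \geq \cl(\zero)$, applying Theorem~\ref{thm:T-comp_partition_exist} to each piece of $\mS(e)$, and then verifying directly that the union is a Tutte partition of $M$ via the four rank/height checks. Your version is more self-contained, but the underlying argument is identical to what the paper packages into its reference to the proof of Theorem~\ref{thm:T-comp_partition_exist} (where the facts $\w([\zero,e_k])=1$ and $\w([\bar e_k,\one])=0$ are established in exactly the way you describe).
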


\begin{proof}
It is clear that $\tau(M;x,y)$ satisfies \ref{qP1}. If $M$ is prime-free, then $\tau(M;x,y)=x^{r(M)}y^{\nu(M)}$, and since any minor of $M$ is also prime-free, for any atom $e\in M$ we have that
\[
\tau(M/e;x,y)=x^{r(M)-r(e)}y^{\nu(M)-\nu(e)} \text{ and } \tau(M(e);x,y)=x^{r(e)}y^{\nu(e)}.
\] 
Therefore, $\tau(M;x,y)=\tau(M(e);x,y)\tau(M/e;x,y)$, which shows that $\tau(M;x,y)$ satisfies \ref{qP2}. If $M$ is not prime-free, then as in the proof of Theorem~\ref{thm:T-comp_partition_exist}, if $e\in\mL$ is an independent atom that is contained in all coloops of $M$, then 
there a Tutte partition $\mathcal{P}$ that is a refinement of a proper interval decomposition $\mathcal{P}(e)$. That is, $M$ has a Tutte partition of the form
\[
\mP = \mP'\;\dot{\cup}\; \mP'' \;\dot{\cup}\; \mP_1 \;\dot{\cup}\; \cdots\; \dot{\cup}\;\mP_{q^{n-1}-1}
\]
where $\mP'$ is a Tutte partition of the minor $M(e^c)$,  $\mP''$ is a Tutte partition of the minor $M/e$, and $\mP_k$ is a Tutte partition of the minor $M([e_k,\bar{e}_k])$ for each $k$.
Therefore, 
\begin{align*}
    \tau(M;x,y)&=\tau_\mP(M;x,y)\\
    &= \sum_{P \in \mP} x^{r_P(M(P))}y^{\nu_P(M(P))}\\
    &= \sum_{P \in \mP'} x^{r_P(M(P))}y^{\nu_P(M(P))} +\sum_{P \in \mP''} x^{r_P(M(P))}y^{\nu_P(M(P))} + \sum_{k=1}^{q^{n-1}-1} \sum_{P \in \mP_k} x^{r_P(M(P))}y^{\nu_P(M(P))}\\
    &=\tau(M(e^c);x,y)+\tau(M/e;x,y)+\displaystyle\sum_{k=1}^{q^{n-1}-1}\tau(M([e_k,\bar{e}_k]);x,y),
\end{align*}
which shows that $\tau(M;x,y)$ satisfies \ref{qP3}.
\end{proof}

We now arrive at a definition of the Tutte polynomial as a $q$-T-G invariant.

\begin{theorem}\label{thm:q-T-G_Tutte_poly}
The Tutte polynomial is the unique function $\tau$ from  $\mathcal{M}$ to the ring $\mathbb{Z}[x,y]$ satisfying the following properties:
\begin{description}
    \item[\namedlabel{Tpolyprop1}{\rm (1)}]$\tau(M(e);x,y)=x$ if $e\in\mL$ is independent in $M$, and $\tau(M(e);x,y)=y$ if $e\in\mL$ is a loop of $M$,
    \item[\namedlabel{Tpolyprop2}{\rm (2)}] $\tau(M;x,y)$ is a $q$-T-G invariant.
\end{description}
\end{theorem}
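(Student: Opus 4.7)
I would first verify that the Tutte polynomial $\tau(M;x,y)$ from Definition \ref{def:q-Tutte_poly}, which is well-defined by Corollary \ref{cor:T-poly_indep_of_choice}, satisfies properties (1) and (2). For (1), if $e \in \mL$ is an atom, then $M(e)$ has the trivial Tutte partition $\{[\zero,e]\}$, so $\tau(M(e);x,y) = x^{r(e)} y^{h(e)-r(e)}$; this evaluates to $x$ if $e$ is an isthmus (so $r(e)=1$) and to $y$ if $e$ is a loop (so $r(e)=0$). Property (2) is precisely Proposition \ref{prop:Tutte_poly_is_T-G_invar}.

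\textbf{Uniqueness.} Let $f : \mathcal{M} \longrightarrow \mathbb{Z}[x,y]$ satisfy (1) and (2). I would prove $f(M) = \tau(M;x,y)$ by strong induction on $n := h(\one)$. The base case $n = 1$ is immediate from (1), as the unique atom $\one$ is either an isthmus or a loop. For the inductive step, assume the claim for all matroids or $q$-matroids of height less than $n$. If $M$ is prime-free, pick any atom $e \in \mL$; both $M(e)$ and $M/e$ have height strictly less than $n$, so by the inductive hypothesis and \ref{qP2} applied to both $f$ and $\tau$,
\[
f(M) = f(M(e))\,f(M/e) = \tau(M(e))\,\tau(M/e) = \tau(M).
\]
If $M$ is not prime-free, use Lemma \ref{lem:disjoint_Tutte_minors} to select an independent atom $e$ below every coloop of $M$ with a complement $e^c$ containing every loop. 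Applying \ref{qP3} to $f$ produces some minimal $q$-partition $\mathcal{S}(e)$ such that
\[
f(M) = f(M(e^c)) + f(M/e) + \sum_{k=1}^{q^{n-1}-1} f(M([e_k,\bar{e}_k])).
\]
Each minor appearing on the right has height at most $n - 1$, so the inductive hypothesis replaces every $f(\cdot)$ by $\tau(\cdot)$. It then suffices to show that the same identity holds with $\tau$ in place of $f$ for this very $\mathcal{S}(e)$.

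\textbf{Main obstacle.} The delicate point is the last one: \ref{qP3} is only existentially quantified over $\mathcal{S}(e)$, so $f$ may decompose along a partition that is not the canonical one produced in the proof of Proposition \ref{prop:Tutte_poly_is_T-G_invar}. I would resolve this by revisiting the inductive construction in the proof of Theorem \ref{thm:T-comp_partition_exist}: for $e$ chosen as above, any minimal $q$-partition $\mathcal{S}(e)$ whose boundary covers satisfy $\w([\zero,e])=\w([\zero,e_k])=1$ and $\w([e^c,\one])=\w([\bar{e}_k,\one])=0$ admits a refinement into a Tutte partition of $M$, obtained by refining each block $[e_k,\bar{e}_k]$ (and $[\zero,e^c]$, $[e,\one]$) to a Tutte partition of the corresponding minor. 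Summing $x^{r_A(\one_A)}y^{\nu_A(\one_A)}$ over this global Tutte partition splits according to the blocks of $\mathcal{S}(e)$, and by Corollary \ref{cor:T-poly_indep_of_choice} the resulting identity
\[
\tau(M) = \tau(M(e^c)) + \tau(M/e) + \sum_{k=1}^{q^{n-1}-1} \tau(M([e_k,\bar{e}_k]))
\]
is independent of the chosen refinement. Combined with the inductive hypothesis, this gives $f(M) = \tau(M)$ and completes the proof.
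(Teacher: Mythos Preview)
Your proof follows the same inductive scheme as the paper's, and you go further by explicitly identifying and addressing the existential quantifier in \ref{qP3}: the paper simply asserts the final equality ``$=\tau(M;x,y)$'' without comment, whereas you explain why the particular $\mathcal{S}(e)$ at hand refines to a Tutte partition of $M$, so that $\tau$ decomposes along it.

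There is one residual subtlety in your resolution that is worth flagging. You first use Lemma~\ref{lem:disjoint_Tutte_minors} to choose both $e$ and a complement $e^c$ containing every loop, and your ``main obstacle'' argument requires $\w([\zero,e_k])=1$ for all $k$ (equivalently, that the $e^c$ appearing in $\mathcal{S}(e)$ lies above every loop). However, \ref{qP3} only lets you fix the admissible atom $e$; the complement $e^c$ and the remaining blocks come packaged inside whatever $\mathcal{S}(e)$ the axiom hands you for $f$, and nothing in \ref{qP3} forces that $e^c$ to contain all loops. If some loop $\ell$ satisfies $\ell\nleq e^c$, then $\ell$ is one of the $e_k$, the cover $[\zero,e_k]$ has weight $0$, and the block-by-block refinement no longer yields a Tutte partition of $M$ (the bottoms of intervals in the refinement of $[e_k,\bar{e}_k]$ are not independent in $M$). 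The paper's proof has precisely the same gap, just left implicit; your write-up has the merit of making the issue visible.
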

\begin{proof}
From Definition \ref{def:q-Tutte_poly} and Proposition~\ref{prop:Tutte_poly_is_T-G_invar}, it is clear that $\tau$ satisfies properties \ref{Tpolyprop1} and \ref{Tpolyprop2}, so all that remains to show is uniqueness. For $\h(\one)\leq2$ uniqueness clearly holds, so assume that it holds for any member of $\M$ whose support lattice has length at most $n-1$. Now let $\h(\one)=n$ and suppose that  $\gamma:\mathcal{M}\rightarrow\mathbb{Z}[x,y]$ is a function that satisfies properties \ref{Tpolyprop1} and \ref{Tpolyprop2}. 
Any proper minor of $M$ has support lattice of length at most $n-1$ and so by hypothesis the Tutte polynomial defined on these minors is the unique $q$-T-G invariant 
satisfying properties \ref{Tpolyprop1} and \ref{Tpolyprop2} of the theorem statement.
It follows that $\gamma(M(e);x,y)=\tau(M(e);x,y)$ and $\gamma(M/e;x,y)=\tau(M/e;x,y)$ for any atom $e \in \mL$, so in particular, if $M$ is prime-free we have
$$\gamma(M;x,y)=\gamma(M(e);x,y)\gamma(M/e;x,y)=\tau(M(e);x,y)\tau(M/e;x,y)=\tau(M;x,y),$$
for any atom $e\in M$. 
If $M$ is not prime-free, then by property \ref{Tpolyprop2} (and specifically, \ref{qP3}) we have 
\begin{align*}
    \gamma(M;x,y)&=\gamma(M(e^c);x,y)+\gamma(M/e;x,y)
    +\displaystyle\sum_{k=1}^{q^{n-1}-1}\gamma(M([e_k,\bar{e}_k];x,y)\\
    &=\tau(M(e^c);x,y)+\tau(M/e;x,y)+\displaystyle\sum_{k=1}^{q^{n-1}-1}\tau(M([e_k,\bar{e}_k]);x,y)\\
    &=\tau(M;x,y).\qedhere
\end{align*}
\end{proof}

\begin{remark}
A recursive definition of the characteristic polynomial of a $q$-matroid is shown in \cite[Theorem 5.14]{B22_proj} and in \cite[Corollary 3.4]{alfarano2023critical}, wherein the characteristic polynomial is expressed in terms of characteristic polynomials of minors of the $q$-matroid that are defined with respect to a partition of the one-dimensional spaces of ground vector space $V$. While this resembles the definition of a $q$-Tutte-Grothendieck invariant in this paper, the collection of intervals involved is different.
\end{remark}
\subsection{$q$-Tutte-Grothendieck invariants and the rank generating polynomial}\label{subsec:rank_poly}

One of the fundamental properties of the rank generating polynomial for matroids is its decomposition as is shown in \cite{brylawski_oxley_1992} and \cite{C69}. In particular, the rank generating polynomial of a matroid is a T-G invariant in the sense of Definition \ref{def:classicTG}. We will see in Section \ref{subsec:T-G_invar} that for matroids, the $q$-T-G invariants and the T-G invariants coincide but for now we focus on $q$-T-G invariants. We will see a divergence between behaviour of the rank function of a matroid and that of a $q$-matroid: more precisely, we will see that in general, the rank function of a $q$-matroid is not a $q$-T-G invariant. 

For the remainder of this section, we assume that $\mL$ has a proper interval decomposition, that is, for any atom $e \in \mL$, we have a partition of 
$\mL$ into disjoint intervals:
\begin{equation}\label{eq:Se}
   \mathcal{P}(e) = [e,{\bf1}]\:\dot{\cup}\: [{\bf0},e^c]\:  \dot{\cup} \: [e_1,\Bar{e}_1] \:\dot{\cup} \:\cdots\:\dot{\cup}\:[e_{q^{n-1}-1},\bar{e}_{q^{n-1}-1}],
\end{equation}
such that $\h(e_k)=1$, $\h(\Bar{e}_k)=n-1$, and $e_k \leq \Bar{e}_k$.
Note that we make no assumptions about the atom $e$ here.
\begin{proposition}\label{thm:rank_poly_split}
Let $e\in\mathcal{L}$ be an atom and let 
$\mathcal{P}(e)$ be the proper interval decomposition of $\mL$, defined as in \rm{(\ref{eq:Se})}.
Then the rank generating polynomial $\rho(M;x,y)$ satisfies the following identity:
\begin{equation*}
    \rho(M;x,y)=x^{r({\bf1})-r(e^c)}\rho(M(e^c);x,y)+y^{\nu(e)}\rho(M/e;x,y)+\displaystyle\sum^{q^{n-1}-1}_{k=1}x^{r({\bf1})-r(\Bar{e}_k)}y^{\nu(e_k)}\rho(M([e_k,\Bar{e}_k]);x,y).
\end{equation*}
\end{proposition}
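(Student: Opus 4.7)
The plan is to split the defining sum of $\rho(M;x,y)$ according to the disjoint partition $\mS(e)$ and then rewrite the contribution of each interval as a scaled rank polynomial of the corresponding minor. This is essentially a bookkeeping argument; the main step is to track how $r$ and $\nu$ restrict to an interval.

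First, I would fix a generic interval $A=[a,b]\in\mS(e)$ and recall that the minor $M(A)=([a,b],r_{[a,b]})$ has rank function $r_A(z)=r(z)-r(a)$ and nullity function $\nu_A(z)=\nu(z)-\nu(a)$ for every $z\in[a,b]$. In particular, $r_A(b)-r_A(z)=r(b)-r(z)$ and $\nu_A(z)=\nu(z)-\nu(a)$, so that
\begin{align*}
   \sum_{z\in A} x^{r({\bf 1})-r(z)}\,y^{\nu(z)}
   &= x^{r({\bf 1})-r(b)}\,y^{\nu(a)}\sum_{z\in A} x^{r_A(b)-r_A(z)}\,y^{\nu_A(z)}\\
   &= x^{r({\bf 1})-r(b)}\,y^{\nu(a)}\,\rho(M(A);x,y).
\end{align*}

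Next, since $\mS(e)$ is a partition of $\mL$ into disjoint intervals by Lemma~\ref{prop:lattice_partition}, the defining sum of $\rho(M;x,y)$ splits as
\[
   \rho(M;x,y)=\sum_{A\in\mS(e)} \sum_{z\in A} x^{r({\bf 1})-r(z)}\,y^{\nu(z)}.
\]
Applying the identity above to each of the three types of intervals in $\mS(e)$ gives the claim. For $A=[{\bf 0},e^c]$ one has $\nu({\bf 0})=0$, yielding the prefactor $x^{r({\bf 1})-r(e^c)}$; for $A=[e,{\bf 1}]$ one has $r({\bf 1})-r({\bf 1})=0$, yielding the prefactor $y^{\nu(e)}$; and for each $A=[e_k,\bar e_k]$ one obtains the prefactor $x^{r({\bf 1})-r(\bar e_k)}y^{\nu(e_k)}$.

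There is no real obstacle here: the only thing to keep straight is the relabelling from $r,\nu$ on $\mL$ to $r_A,\nu_A$ on the minor $M(A)$, which is exactly what the initial identity accomplishes. Once that is in place, summing over the three classes of intervals of $\mS(e)$ yields the stated decomposition.
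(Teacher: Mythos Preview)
Your proof is correct and follows essentially the same approach as the paper: both establish the identity $\sum_{z\in[a,b]}x^{r({\bf1})-r(z)}y^{\nu(z)}=x^{r({\bf1})-r(b)}y^{\nu(a)}\rho(M([a,b]);x,y)$ for a generic interval, then split the defining sum of $\rho(M;x,y)$ over the disjoint intervals of $\mS(e)$ and apply this identity piecewise. The only difference is the order of presentation.
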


\begin{proof}
 For any interval $[a,b]\subseteq\mathcal{L}$, we have that $$\displaystyle\sum_{z\in[a,b]}x^{r({\bf1})-r(z)}y^{\nu(z)}=x^{r({\bf1})-r(b)}y^{\nu(a)}\displaystyle\sum_{z\in[a,b]}x^{r_{[a,b]}(b)-r_{[a,b]}(z)}y^{\nu_{[a,b]}(z)}=x^{r({\bf1})-r(b)}y^{\nu(a)}\rho(M([a,b]);x,y).$$ We then have
\begin{align*}
    \rho(M;x,y)&=\displaystyle\sum_{z\in\mathcal{L}}x^{r({\bf1})-r(z)}y^{\nu(z)}\\
    &=\displaystyle\sum_{z\in[e,{\bf1}]}x^{r({\bf1})-r(z)}y^{\nu(z)}+\displaystyle\sum_{z\in[{\bf0},e^c]}x^{r({\bf1})-r(z)}y^{\nu(z)}+\displaystyle\sum_{k=1}^{q^{n-1}-1}\sum_{z\in[e_k,\Bar{e}_k]}x^{r({\bf1})-r(z)}y^{\nu(z)}\\
    &=y^{\nu(e)}\rho(M/e;x,y)+x^{r({\bf1})-r(e^c)}\rho(M(e^c);x,y)\\
    &\hspace{0.5cm}+\displaystyle\sum_{k=1}^{q^{n-1}-1}x^{r({\bf1})-r(\Bar{e}_k)}y^{\nu(e_k)}\rho(M([e_k,\Bar{e}_k]);x,y).\qedhere
\end{align*}
\end{proof}

\begin{example}
Let $M$ be the $q$-matroid 
of Example \ref{ex:P1tutte}. Let $\mathcal{P}(e)$ be the proper interval decomposition determined by $e=\langle101\rangle$, $e'=\langle100,010\rangle$, $e_1=\langle111\rangle$, $e_2=\langle011\rangle$, $e_3=\langle001\rangle$, $\Bar{e}_1=\langle100,011\rangle$, $\Bar{e}_2=\langle010,001\rangle$, and $\Bar{e}_3=\langle110,001\rangle$, which is shown on the right-hand side of 
Figure \ref{fig:tutte_example}.
We express $\rho(M;x,y)$ as a sum of rank generating polynomials of the minors supported on the distinct elements of $\mathcal{P}(e)$.
\begin{align*}
    \rho(M;x,y)&=x+xy+7y+y^2+6\\
    &=(x+xy+y+2)+(y^2+3y+1)+3(y+1)\\
    &=\rho(M(e');x,y)+\rho(M/e;x,y)+\displaystyle\sum_{k=1}^3\rho(M([e_k,\Bar{e}_k]);x,y).
\end{align*}
\end{example}

\begin{remark}\label{lem:rank_tutte_intervals}
Let $\mathcal{P}$ be a collection of intervals that forms a partition of  $\mathcal{L}$.
Suppose that for each $A\in\mathcal{P}$ it holds that $r(\zero_A)=\h(\zero_A)$ and $r(\one_A)=r(\one)$. Note that we are not assuming that the elements of $\mP$ are prime-free and hence $\mP$ need not be a Tutte partition. Then we have that 
$$\rho(M;x,y)=\displaystyle\sum_{A\in\mathcal{P}}\rho(A;x,y).$$

To see this, let $A\subseteq\mL$ be any interval. We have $r_A(\one_A)-r_A(x)=r(\one_A)-r(x)$, and $\nu_A(x)=\h(x)-\h(\zero_A)-r_A(x)$ for all $x\in A$. 
Therefore, if $r(\zero_A)=\h(\zero_A)$ and $r(\one_A)=r({\bf1})$, then $r({\bf1})-r(x)=r_A(\one_A)-r_A(x)$ and $\nu(x)=\nu_A(x)$ for all $x\in A$. This now gives us
\begin{align*}
    \rho(M;x,y)&=\displaystyle\sum_{z\in\mathcal{L}}x^{r({\bf1})-r(z)}y^{\nu(z)}\\
    &=\displaystyle\sum_{A\in\mathcal{P}}\displaystyle\sum_{z\in A}x^{r({\bf1})-r(z)}y^{\nu(z)}\\
    &=\displaystyle\sum_{A\in\mathcal{P}}\displaystyle\sum_{z\in A}x^{r_A(\one_A)-r_A(z)}y^{\nu_A(z)}\\
    &=\displaystyle\sum_{A\in\mathcal{P}}\rho(A;x,y).
\end{align*}
\end{remark}

It is clear that the rank generating polynomial of a $q$-matroid satisfies \ref{qP1}. By Proposition~\ref{thm:rank_poly_split} we see that $\rho(M;x,y)$ also satisfies \ref{qP3}. 
However, if $M$ is a $q$-matroid, in general its rank generating polynomial does not satisfy \ref{qP2}. 

\begin{proposition}
      Let $\M$ denote the class of $q$-matroids. The rank function $\rho:\M \longrightarrow \Z[x,y]$ is not a $q$-T-G invariant.
\end{proposition}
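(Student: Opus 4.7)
The plan is to establish the claim by exhibiting a single explicit counterexample. Since the rank polynomial manifestly satisfies \ref{qP1}, and Proposition~\ref{thm:rank_poly_split} yields a decomposition that implies \ref{qP3} for any atom $e$ (relative to a suitable minimal $q$-partition $\mathcal{S}(e)$), the only axiom that can possibly fail is the multiplicative property \ref{qP2} on prime-free $q$-matroids. So it suffices to exhibit one prime-free $q$-matroid $M$ and one atom $e \in \mathcal{L}$ for which $\rho(M;x,y) \neq \rho(M(e);x,y)\rho(M/e;x,y)$.

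First I would choose the simplest possible prime-free $q$-matroid: the free $q$-matroid $M$ of length $n \geq 2$ over $\mathbb{F}_q$ with $q \geq 2$, namely the one whose rank function is $r(v) = h(v)$ for every $v \in \mathcal{L}(V)$. Every cover then has weight $1$, so no diamond can be prime and $\mathbf{0}$ is the (unique) totally clopen element by Lemma~\ref{lem:clopen_properties}. In particular $M$ is prime-free, so the hypothesis of \ref{qP2} applies for any atom $e \in \mathcal{L}$.

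Taking the minimal case $n = 2$ already suffices. By Corollary~\ref{cor:alpha_rank_poly_p-free}, since $r(\mathbf{1}) = 2$ and $\nu(\mathbf{1}) = 0$,
\[
\rho(M;x,y) \;=\; \sum_{i=0}^{2} \qbin{2}{i}{q} x^i \;=\; 1 + (q+1)x + x^2.
\]
On the other hand, for any atom $e \in \mathcal{L}$, both minors $M(e)$ and $M/e$ are supported on length-$1$ intervals consisting of a single weight-$1$ cover, so
\[
\rho(M(e);x,y) \;=\; \rho(M/e;x,y) \;=\; 1 + x,
\]
and hence $\rho(M(e);x,y)\rho(M/e;x,y) = 1 + 2x + x^2$. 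The discrepancy is $(q-1)x$, which is nonzero for every $q \geq 2$. Thus \ref{qP2} fails, and $\rho$ is not a $q$-T-G invariant.

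There is no serious obstacle here; the only point requiring care is verifying that the chosen $M$ is genuinely prime-free, which is immediate because every cover has weight $1$ and so no diamond has the alternating pattern required of a prime. Conceptually, the failure occurs because the product $\rho(M(e);x,y)\rho(M/e;x,y)$ mimics a Boolean-lattice decomposition and registers only two intervals, whereas $\rho(M;x,y)$ sums over all $\qbin{n}{k}{q}$ subspaces of each dimension; the resulting $q$-binomial versus binomial mismatch indicates that the same obstruction persists for every nontrivial prime-free $q$-matroid whenever $q \geq 2$.
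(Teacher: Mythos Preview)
Your proposal is correct and follows essentially the same approach as the paper: both identify \ref{qP2} as the failing axiom and exhibit a prime-free $q$-matroid for which the multiplicative identity fails by direct comparison. The paper's argument is marginally more general, evaluating at $(x,y)=(1,1)$ to compare lattice-element counts for an arbitrary prime-free $q$-matroid of length $n$, whereas you compute the full polynomials in the minimal case $n=2$ with the free $q$-matroid; both reveal the same $q$-binomial versus binomial discrepancy.
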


\begin{proof}
    Suppose that $M$ is prime-free and let $e$ be an atom of $\mL$.
    Then $\rho(M/e;x,y)$ is also prime free and is supported on a lattice of height $n-1$. 
    We have $\rho(M;1,1)=\displaystyle\sum_{k=0}^n\qbin{n}{k}{q}$, since this counts the number of elements of $\mL$. However,
    \begin{align*}
    \rho(M(e);1,1)\rho(M/e;1,1)=2\cdot\displaystyle\sum_{k=0}^{n-1}\qbin{n-1}{k}{q}\neq\sum_{k=0}^n\qbin{n}{k}{q}=\rho(M;1,1).
\end{align*}
Therefore, $\rho(M;x,y)$ does not satisfy \ref{qP2} and hence the rank generating polynomial of a $q$-matroid  is not a $q$-T-G invariant.
\end{proof}

\section{A return to the matroid case}\label{sec:return_to_matroid_case}

In this section, we restrict to the matroid case. Firstly, in Section~\ref{subsec:Tutte-partition_Crapo}, we will look at the notion of a Tutte partition as defined in Section \ref{sec:q-Tutte_partition} and compare this with the partition arising in Crapo's definition of the Tutte polynomial \cite{C69}. The main result of Section~\ref{subsec:Tutte-partition_Crapo} is Corollary~\ref{cor:crapo-part_is_tutte-part}, which shows that Crapo's definition is a special case of Definition~\ref{def:Tutte_partition}.

Next, in Section~\ref{subsec:T-G_invar}, we will consider the axioms of a $q$-Tutte Grothendieck invariant (Definition \ref{def:qTG}), and compare these with the usual Tutte-Grothendieck axioms (Definition \ref{def:classicTG}). The main result of Section~\ref{subsec:T-G_invar} is Theorem~\ref{thm:T-G_P_equivalence}, which shows that Definition~\ref{def:qTG} gives a $q$-analogue of a T-G invariant.

\begin{notation}
  Throughout this section 
  $M$ denotes an arbitrary matroid with Boolean support lattice $\mL=\mL(S)$ for some finite set $S$ of cardinality $n$ and $\mathcal{B}$ denotes the collection of bases of $M$.  $\M$ now refers to the class of matroids.
\end{notation}

\subsection{Crapo's definition of the Tutte polynomial}\label{subsec:Tutte-partition_Crapo}

Recall that by Corollary \ref{cor:T-poly_indep_of_choice}, which holds both for matroids and $q$-matroids, the Tutte polynomial of Definition \ref{def:q-Tutte_poly} in the matroid case is necessarily the Tutte polynomial of \cite{C69}. 
The definition of the Tutte polynomial of a matroid given in \cite{C69} uses a partition $\mathcal{P}$ of $\mL$ into intervals such that each $A\in\mathcal{P}$ contains exactly one basis of $M$. This leads to the question of the equivalence
the partition of $\mL$ given in \cite{C69} and the Tutte partition of Definition~\ref{def:Tutte_partition}.

We now briefly describe the partition of $\mL$ into intervals arising in Crapo's paper. For each $b\in\mathcal{B}$, we define the interval $[b^-,b^+]$ in the following way.
Let $\beta$ denote any total ordering of the bases of $M$ derived from an arbitrary total ordering of the atoms as in \cite{C69}. For any interval $[x,y]\subseteq\mL$, let the $\beta$-max (respectively, $\beta$-min) basis of $[x,y]$ refer to the maximum (respectively, minimum) basis in $[x,y]$ according to this $\beta$. By \cite[Lemmas 7 and 8]{C69} we have that the basis $b$ is the $\beta$-max basis of $[b^-,{\bf1}]$ and $b$ is the $\beta$-min basis of $[{\bf0},b^+]$. We summarize these notions in the form of the following definition.

\begin{definition}\label{def:crapo_partition}
Let $\beta$ be an arbitrary total ordering of the bases of $M$. Let $\mathcal{P}$ be a partition of $M$ into a union of disjoint minors. We say that $\mP$ is a Crapo-Tutte partition if for every $[x,y]\in\mathcal{P}$ the following hold.
\begin{enumerate}
    \item There is exactly one basis $b$ of $M$ in $[x,y]$.
    \item The basis $b \in [x,y]$ is the $\beta$-max basis of $[x,{\bf1}]$.
    \item The basis $b \in [x,y]$ is the $\beta$-min basis of $[{\bf0},y]$.
\end{enumerate}
\end{definition}

We denote the elements of a Crapo-Tutte partition by $[b^-,b^+]$ for each $b \in \mB$ and note that each such interval is uniquely determined by the ordering $\beta$.
The definition of the Tutte polynomial given in \cite{C69} is as follows.
\begin{definition}
Let $M$ be a matroid with set of bases $\mathcal{B}$. The Tutte polynomial of $M$ is $$\tau(M;x,y)=\displaystyle\sum_{b\in\mathcal{B}}x^{|b-b^-|}y^{|b^+-b|}.$$
\end{definition}

\begin{remark}
   This definition does not immediately generalise to $q$-matroids: in the matroid case, by \cite[Lemma 8]{C69}, the collection of intervals $\{[b^-,b^+]\subseteq\mathcal{L}: b \in\mathcal{B}\}$ form a partition of $\mathcal{L}$, whereas such a construction does not yield a partition in the event that the support lattice is the subspace lattice.
\end{remark}

\begin{lemma}\label{lem:m-aisom/a}
Let $A \in \mL$. If each of the atoms in $A$ is either a loop or isthmus, then $M/A\cong M-A$.
\end{lemma}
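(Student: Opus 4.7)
The plan is to exhibit an explicit lattice isomorphism between the support lattices $[A,S]$ of $M/A$ and $[\emptyset, S\setminus A]$ of $M-A$ and check that it intertwines the two rank functions. The natural candidate is $\varphi\colon[A,S]\longrightarrow[\emptyset,S\setminus A]$ defined by $\varphi(X)=X\setminus A$, with inverse $Y\mapsto Y\cup A$. That this is a Boolean-lattice isomorphism is immediate, since both intervals, as Boolean sublattices of $\mL(S)$, are canonically isomorphic to $\mL(S\setminus A)$ and $\varphi$ realises this identification, so properties (1)--(3) of Definition~\ref{def:iso} follow directly.

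The substantive step is to verify the rank equality
\[
  r_{[A,S]}(X)=r(X)-r(A)=r(X\setminus A)=r_{[\emptyset,A^c]}(\varphi(X))
  \qquad\text{for all } X\in[A,S].
\]
To do this I would first split $A=L\,\dot\cup\, C$, where $L$ is the set of atoms of $A$ that are loops of $M$ and $C$ the set of atoms of $A$ that are isthmuses; by hypothesis $A=L\cup C$. Then I would invoke two standard, easily verified consequences of the rank axioms (R1)--(R3): (i) if $\ell$ is a loop then $r(Y\cup\{\ell\})=r(Y)$ for every $Y\subseteq S$, and (ii) if $c$ is an isthmus then $r(Y\cup\{c\})=r(Y)+1$ for every $Y\subseteq S$ with $c\notin Y$. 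Fact (i) follows because $r(\ell)=0$ and submodularity forces $r(Y\cup\ell)\leq r(Y)+r(\ell)-r(Y\cap \ell)=r(Y)$, while monotonicity gives the reverse inequality; fact (ii) is the standard characterisation of an isthmus as an element lying in every basis, equivalent to the paper's definition that $S\setminus\{c\}$ is a coatom of rank $r(M)-1$.

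Applying (i) $|L|$ times and then (ii) $|C|$ times to the decomposition $X=(X\setminus A)\cup L\cup C$ (valid whenever $X\supseteq A$) yields
\[
  r(X)=r\bigl((X\setminus A)\cup L\cup C\bigr)=r\bigl((X\setminus A)\cup C\bigr)=r(X\setminus A)+|C|.
\]
Specialising to $X=A$ gives $r(A)=|C|$, so subtracting produces the required identity $r(X)-r(A)=r(X\setminus A)$. Combined with the lattice isomorphism $\varphi$, Definition~\ref{def:lat-equiv} gives $M/A\cong M-A$.

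I do not expect any real obstacle here; the only point that needs a little care is the order of the inductive peeling of loops and isthmuses, but since facts (i) and (ii) hold unconditionally on the ambient set, any order works. The argument is essentially a clean bookkeeping step, and the Boolean structure of $\mL(S)$ means that the lattice-isomorphism portion is automatic.
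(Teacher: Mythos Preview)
Your proof is correct and follows essentially the same approach as the paper's: both use the obvious Boolean-lattice isomorphism between $[A,S]$ and $[\emptyset,A^c]$ (you take $X\mapsto X\setminus A$, the paper takes the inverse $x\mapsto x\cup A$) and then verify the rank identity $r(X)=r(X\setminus A)+r(A)$ via the additive effect of adjoining a loop or isthmus. The paper phrases this uniformly as $r(a\cup B)=r(a)+r(B)$ for any loop or isthmus $a$ and $a\notin B$, while you separate the two cases and peel off atoms one at a time; the content is the same, and if anything your version is slightly more explicit about the induction over the atoms of $A$.
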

\begin{proof}
Define the lattice isomorphism $\varphi:M-A\rightarrow M/A$ by $\varphi(x)=x\cup A$. 
Clearly, if $a$ is an isthmus or loop then $r(a\cup B)=r(a)+r(B)$ for any $B \in \mL$ such that $a \notin B$. Therefore, for any $x\subseteq A^c$ we have that $r(x\cup A)=r(x)+r(A)$. We thus have that $r_{M/A}(\varphi(x))=r(\varphi(x))-r(A)=r(x)=r_{M-A}(x)$ for any $x\subseteq A^c$.
\end{proof}

\begin{lemma}\label{lem:one_base_prime_free}
For any $[x,y]\subseteq\mL$, $M([x,y])$ has only one basis if and only if $[x,y]$ is prime-free.
\end{lemma}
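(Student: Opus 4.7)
The plan is to treat the two implications separately, exploiting the fact that in the matroid case the support lattice $\mL(S)$ is Boolean, so every interval $[x,y]\subseteq\mL(S)$ is itself a Boolean lattice (isomorphic to $\mL(y\setminus x)$) in which complements are unique.

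For the direction ``$[x,y]$ prime-free $\Rightarrow$ $M([x,y])$ has a unique basis'', I would apply Theorem~\ref{th:no_prime_from_clopen} to the minor $M([x,y])$ to produce a totally clopen element $z\in[x,y]$. Lemma~\ref{lem:clopen_properties} then identifies the bases of $M([x,y])$ with the complements of $z$ in $[x,y]$, and uniqueness of complements in a Boolean lattice yields exactly one basis.

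For the converse I would argue contrapositively. Assume $[x,y]$ contains a prime diamond $[d_b,d_t]$; in the matroid setting this means $d_t=d_b\cup\{a_1,a_2\}$ for two distinct $a_1,a_2\notin d_b$, and the prime condition gives
\[
r(d_b\cup\{a_1\}) = r(d_b\cup\{a_2\}) = r(d_t) = r(d_b)+1.
\]
Let $I\subseteq d_b\setminus x$ be a maximal independent subset of $d_b\setminus x$ in $M([x,y])$, so $|I|=r(d_b)-r(x)$. A short closure argument (using $\cl(x\cup I)=\cl(d_b)$) shows that $I\cup\{a_1\}$ and $I\cup\{a_2\}$ are each independent in $M([x,y])$, each of rank $r(d_b)+1-r(x)=|I|+1$. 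By the matroid augmentation property each extends to a basis $B_1$, respectively $B_2$, of $M([x,y])$.

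The crux of the argument is verifying $B_1\neq B_2$. If the two bases coincided, the common basis would contain $I\cup\{a_1,a_2\}$, which has size $|I|+2$ but rank at most $r(d_t)-r(x)=|I|+1$ by the prime diamond condition, contradicting independence. Hence $M([x,y])$ admits at least two bases, which completes the contrapositive. I expect the only mildly delicate step to be the closure argument establishing independence of the $I\cup\{a_i\}$; everything else reduces to the already-established totally-clopen machinery of Section~\ref{sec:q-Tutte_partition} and the standard matroid augmentation property.
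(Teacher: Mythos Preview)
Your proposal is correct, but both directions take different routes from the paper's proof.

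For ``prime-free $\Rightarrow$ unique basis'' you invoke Theorem~\ref{th:no_prime_from_clopen} and Lemma~\ref{lem:clopen_properties} to identify the bases with the complements of the totally clopen element, then use that complements in a Boolean lattice are unique. The paper instead proves the contrapositive: given two bases $b_1,b_2$, a single application of basis exchange produces $b_3=(b_1\setminus e_1)\cup e_2$ with $e_1\in b_1\setminus b_2$, $e_2\in b_2\setminus b_1$, and then $[b_1\setminus e_1,\, b_1\cup e_2]$ is visibly a prime diamond. Your argument is a nice payoff of the totally-clopen machinery already built in Section~\ref{sec:q-Tutte_partition}; the paper's argument is shorter but more ad hoc.

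For the other direction you again argue contrapositively (prime diamond $\Rightarrow$ two bases) by taking a maximal independent $I\subseteq d_b\setminus x$ in the minor, using $\cl(x\cup I)=\cl(d_b)$ to see that each $x\cup I\cup\{a_i\}$ is independent, extending to bases $B_1,B_2$, and ruling out $B_1=B_2$ by a rank count against $r(d_t)$. This is fine; the closure step you flag as ``mildly delicate'' really is the only thing needing a line of justification. The paper instead assumes a unique basis, notes that then every atom is a loop or isthmus, and uses Lemma~\ref{lem:m-aisom/a} ($M/A\cong M-A$) to force a second basis from any prime diamond. The paper's route is why Lemma~\ref{lem:m-aisom/a} is placed just before this result; your argument is self-contained and bypasses that lemma entirely.
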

\begin{proof}
For one of the implications, it suffices to show that any matroid with only one basis is prime-free, so suppose that $M$ has only one basis. Suppose now that $M$ has a prime diamond $D$, say $D=[A,A\cup x\cup y]$ for some $A,x,y\in\mL$ with $\h(x)=\h(y)=1$ and $x \notin y \cup A$ and $y \notin x \cup A$. Since $M$ has only one basis, each atom in $\mL$ is either a loop or isthmus of $M$. Then from Lemma \ref{lem:m-aisom/a}, $M/A\cong M-A$. If there is a basis $B$ of $M/A$ such that $x\cup A$ and $y\cup A$ are both contained in $B$, then $A \cup x \cup y \subseteq B$, which contradicts the fact that $D$ is a prime diamond. Therefore, there are at least two bases in $M/A$ and so there are at least two bases in $M-A$. But any basis of $M-A$ corresponds to a unique basis of $M$, which yields a contradiction.

Conversely, suppose that $M$ contains two bases, $b_1,b_2\in\mathcal{B}$. By the basis-exchange property of matroids, there exist $b_3\in\mathcal{B}$ and atoms $e_1\in b_1-b_2$ and $e_2\in b_2-b_1$ such that $b_3=(b_1-e_1)\cup e_2$. This clearly implies that $[b_1-e_1,b_1\cup e_2]$ is a prime diamond.
\end{proof}

Recalling Definition~\ref{def:Tutte_partition}, we get the following corollary.

\begin{corollary}\label{cor:crapo-part_is_tutte-part}
Every Crapo-Tutte partition is a Tutte partition.
\end{corollary}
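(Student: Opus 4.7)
The plan is to verify the three defining properties of a Tutte partition directly from the defining properties of a Crapo-Tutte partition, leaning on Lemma~\ref{lem:one_base_prime_free} for the prime-freeness condition. So let $\mP$ be a Crapo-Tutte partition, fix an arbitrary $[b^-, b^+] \in \mP$, and let $b$ be the unique basis of $M$ contained in this interval.

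First I would establish the two rank-height conditions. Since $b$ is a basis of $M$ and $b^- \subseteq b$, the subset $b^-$ is independent in $M$, giving $r(b^-) = h(b^-)$, which is property (1) of Definition~\ref{def:Tutte_partition}. Similarly, since $b \subseteq b^+$ and $r(b) = r(\mathbf{1})$, monotonicity of the rank function forces $r(b^+) = r(\mathbf{1})$, which is property (2). Both steps are essentially immediate from the fact that $b$ is a basis of $M$ lying inside the interval.

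The main content is verifying prime-freeness, for which I would invoke Lemma~\ref{lem:one_base_prime_free} applied to the minor $M([b^-, b^+])$. The potential subtlety is that Definition~\ref{def:crapo_partition} only asserts the uniqueness of a basis of $M$ inside $[b^-, b^+]$, while Lemma~\ref{lem:one_base_prime_free} needs uniqueness of a basis of the minor. However, the two notions coincide here: using $r(b^-) = h(b^-)$ and $r(b^+) = r(\mathbf{1})$ established above, any $z \in [b^-, b^+]$ satisfies
\[
r_{[b^-,b^+]}(z) = r(z) - h(b^-), \qquad h(z) - h(b^-) = r_{[b^-,b^+]}(b^+),
\]
so $z$ is a basis of $M([b^-, b^+])$ exactly when $r(z) = h(z) = r(\mathbf{1})$, i.e.\ exactly when $z$ is a basis of $M$. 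Hence the minor has a unique basis, so Lemma~\ref{lem:one_base_prime_free} yields that $[b^-, b^+]$ is prime-free, giving property (3).

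Since $[b^-, b^+]$ was arbitrary in $\mP$ and the elements of $\mP$ partition $\mL$ (as noted in the discussion preceding Definition~\ref{def:crapo_partition}), $\mP$ satisfies all three conditions of Definition~\ref{def:Tutte_partition}. I do not anticipate any real obstacle; the argument is essentially bookkeeping using properties already stated. The only point that might warrant a sentence of care is the translation between ``unique basis of $M$ in $[b^-, b^+]$'' and ``unique basis of the minor $M([b^-, b^+])$'', which is handled by the two rank identities above.
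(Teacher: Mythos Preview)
Your proof is correct and follows essentially the same approach as the paper's: both use $b^-\subseteq b\subseteq b^+$ to get the two rank conditions and then invoke Lemma~\ref{lem:one_base_prime_free} for prime-freeness. Your extra paragraph reconciling ``unique basis of $M$ in $[b^-,b^+]$'' with ``unique basis of the minor $M([b^-,b^+])$'' is a worthwhile clarification that the paper leaves implicit (though the second displayed identity $h(z)-h(b^-)=r_{[b^-,b^+]}(b^+)$ is not true for general $z$ and should read $r_{[b^-,b^+]}(b^+)=r(\mathbf{1})-h(b^-)$).
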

\begin{proof}
Let $\mP$ be a Crapo-Tutte partition of $\mL$ with respect to some ordering of $\mB$. Since $b^-\subseteq b$ and $b\subseteq b^+$ for any $b\in\mathcal{B}$, we have that $r(b^-)=\h(b^-)$ and $r(b^+)=r(M)$. Furthermore, by Lemma~\ref{lem:one_base_prime_free} we have that $[b^-,b^+]$ is prime-free. Therefore, $\mP=\{[b^-,b^+]: b \in \mB\}$ is a Tutte partition.
\end{proof}

An obvious question now arises, namely, whether or not every Tutte partition of a matroid is also a Crapo-Tutte partition. 
In fact, it is not the case that any Tutte partitions of a matroid can be represented as a disjoint union of intervals of the form $[b^-,b^+]$, $b\in\mathcal{B}$ for some total ordering of bases. 
The following example shows this.
\begin{example}\label{ex:Tutte_vs_Crapo}
Let $M=\mathcal{U}_{3,6}$ (the uniform matroid of rank three on six elements). For brevity, we write $123$ to denote the set $\{1,2,3\}$ etc. 
Consider the following Tutte partition $\mP$ of $M$, the elements of which are listed below.  
\begin{align*}
    &[456,\one]\\
    [124,12346], &[125,12345], [126,12356]\\
    [134,1345], [135,1356], [136,1346], &[234,2345], [235,2356], [236,2346]\\
    [24,245], [25,256], [26,246], &[34,346], [35,345], [36,356]\\
[4,145], &[5,156], [6,146]\\ 
&[\zero,123].
\end{align*}
(Note that for each $[a,b]\in\mathcal{P}$, we have that $a$ is independent, and $b$ is spanning, and that $\mathcal{P}$ is indeed a partition, thus satisfying the properties of a Tutte partition).

Suppose that $\mathcal{P}$ is a Crapo-Tutte partition and let $\leq_\beta$ denote its total order on the bases of $M$, as described in Definition~\ref{def:crapo_partition}. Consider the intervals $[4,145], [5,156]$, and $[6,146]$. By Property 2 of Definition~\ref{def:crapo_partition}, we have that $145$ is the $\beta\textup{-max basis of }[4,\one]$, $156$ is the $\beta\textup{-max basis of }[5,\one]$, and $146$ is the $\beta\textup{-max basis of }[6,\one]$. Since we also have that $146\in[4,\one]$, $145\in[5,\one]$, and $156\in[5,\one]$, we respectively have that $146\leq_\beta145$, $145\leq_\beta156$, and $156\leq_\beta146$. This clearly yields a contradiction, which means that $\mathcal{P}$ cannot be a Crapo-Tutte partition.
\end{example}

\subsection{Tutte-Grothendieck invariants}\label{subsec:T-G_invar}

We now compare the axioms of a $q$-Tutte-Grothendieck invariant (Axioms \ref{qP1}-\ref{qP3}) and the classical axioms of a Tutte-Grothendieck invariant (Axioms \ref{tg1}-\ref{tg3}). We will show that for matroids, these sets of axioms are equivalent.

\begin{remark}\label{rem:double_contract_minor}
We point out that for any $d,e\in\mathcal{L}$, if $e\leq d$, then $(M/e)/d=M/d$.
To see this, first note that both matroids are defined on the interval $[d,{\bf1}]$ and then observe that for any $x\in[d,{\bf1}]$, $r_{(M/e)/d}(x)=r_{M/e}(x)-r_{M/e}(d)=r(x)-r(e)-(r(d)-r(e))=r(x)-r(d)=r_{M/d}(x).$
\end{remark}

\begin{theorem}\label{thm:T-G_P_equivalence}
Let $f:\M \longrightarrow \Z[x,y]$. Then $f$ is a $q$-T-G invariant if and only if $f$ is a T-G invariant.
\end{theorem}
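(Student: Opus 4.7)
The plan is to verify the equivalence axiom by axiom, first translating the $q$-T-G axioms into matroid terminology in the Boolean setting $\mL=\mL(S)$ with $q=1$. Axioms \ref{qP1} and \ref{tg1} coincide verbatim. Since $q^{n-1}-1=0$ and $M(e^c)=M-e$ in the Boolean lattice, axiom \ref{qP3} reduces to $f(M)=f(M-e)+f(M/e)$ for any atom $e$ that is independent and contained in every coloop; unpacking these lattice conditions in $\mL(S)$, this is precisely the requirement that $e$ is neither a loop nor an isthmus of $M$. Moreover, Lemma~\ref{lem:one_base_prime_free}, together with the fact that a matroid with a unique basis decomposes as $M=I\oplus L$ where $I$ is the set of isthmuses and $L$ the set of loops, shows that $M$ is prime-free if and only if every atom of $\mL$ is a loop or an isthmus.

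For the implication \ref{tg1}--\ref{tg3}$\Rightarrow$\ref{qP1}--\ref{qP3}, axiom \ref{qP3} is a direct restatement of \ref{tg3} via the translation above. For \ref{qP2}: if $M$ is prime-free then any atom $e$ is a loop or isthmus, so Lemma~\ref{lem:m-aisom/a} (applied with $A=\{e\}$) gives $M/e\cong M-e$, and combining \ref{tg2} with \ref{tg1} yields $f(M)=f(M(e))f(M-e)=f(M(e))f(M/e)$.

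Conversely, \ref{tg3} follows immediately from \ref{qP3}. For \ref{tg2} I plan to induct on $|S|$, with the base cases $|S|\le 1$ handled by \ref{qP2} and Lemma~\ref{lem:m-aisom/a}. Suppose $e$ is a loop or isthmus of $M$. If $M$ is prime-free, \ref{qP2} and Lemma~\ref{lem:m-aisom/a} yield \ref{tg2} as in the previous paragraph. If $M$ is not prime-free, the first paragraph furnishes an atom $e'\neq e$ that is neither a loop nor an isthmus. Apply \ref{qP3} to $(M,e')$ to get $f(M)=f(M-e')+f(M/e')$. The inductive hypothesis applied to $M-e'$ and $M/e'$ (together with the minor identities $(M-e')(e)=M(e)$, $(M-e')-e=(M-e)-e'$, $(M/e')(e)\cong M(e)$ and $(M/e')-e\cong(M-e)/e'$, each holding because $e$ is a loop or isthmus) factors out $f(M(e))$ to give
\[
f(M)=f(M(e))\bigl(f((M-e)-e')+f((M-e)/e')\bigr).
\]
A second application of \ref{qP3}, this time to $(M-e,e')$, collapses the bracket to $f(M-e)$, establishing \ref{tg2}.

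The principal obstacle is justifying the hypotheses of the two \ref{qP3}-applications in the inductive step: that (i) $e$ is still a loop or isthmus in $M-e'$ and $M/e'$, (ii) $e'$ is still neither a loop nor an isthmus in $M-e$, and (iii) $M-e$ is still not prime-free. All three reduce to the observation that deletion (respectively contraction) of a loop or isthmus $e$ induces a canonical bijection between the bases of $M$ and those of $M-e$ (respectively $M/e$); the loop, isthmus and basis counts then behave transparently under the removal of $e$, and in particular $e'$ can neither acquire nor lose its non-loop, non-isthmus status.
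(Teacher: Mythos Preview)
Your proposal is correct and follows essentially the same route as the paper. Both proofs observe that in the Boolean setting the condition ``independent and contained in every coloop'' is exactly ``neither a loop nor an isthmus'', so \ref{qP3} and \ref{tg3} coincide, and both reduce the only nontrivial point (deriving \ref{tg2} from \ref{qP1}--\ref{qP3}) to an induction: apply \ref{qP3} to $M$ at an atom $e'$ that is neither a loop nor an isthmus, use the inductive hypothesis on the two pieces, and then re-assemble via a second application of \ref{qP3}. The one cosmetic difference is that the paper performs the second \ref{qP3}-step on $M/e$ (obtaining $f(M)=f(M(e))f(M/e)$, which then matches \ref{tg2} via $M/e\cong M-e$), whereas you perform it on $M-e$ and land on \ref{tg2} directly; your choice makes the minor identities slightly more transparent, but the arguments are otherwise the same.
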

\begin{proof}
Assume first that $f$ is a T-G invariant. It is immediate that \ref{qP1} is satisfied. If $M$ is prime-free, then by Lemma~\ref{lem:one_base_prime_free} it contains a unique basis, which means that each atom of $M$ is either a loop or an isthmus. Therefore, using Lemma~\ref{lem:m-aisom/a}, since $f$ satisfies property \ref{tg2}, it also satisfies property \ref{qP2}. If $M$ contains a prime diamond and $e$ is an independent atom that is contained in all coloops of $M$, then $e$ is neither a loop nor an isthmus and so since $f$ satisfies property \ref{tg3}, it necessarily satisfies property \ref{qP3} (since the summation $\sum_{k=1}^{q^{n-1}-1}f(M([e_k,\bar{e}_k]))$ is empty with $q=1$). It follows that any T-G invariant is also a $q$-T-G invariant.
 
Conversely, assume $f$ is a $q$-T-G invariant, so that $f$ satisfies properties \ref{qP1}, \ref{qP2}, and \ref{qP3}. Clearly, if $f$ satisfies \ref{qP1}, then it also satisfies \ref{tg1}. If $M$ contains an atom that is neither a loop nor an isthmus, then by Lemma~\ref{lem:one_base_prime_free}, $M$ contains a prime diamond, and thus \ref{tg3} is clearly satisfied. Therefore, we need only address the case for which $M$ is not prime-free, and $e\in M$ is a loop or an isthmus. Namely, we must show that in such a case $f(M)=f(M(e))f(M/e)$. 

If $\h(\one)=2$, then this holds trivially, so assume that this holds whenever $\h(\one)\leq n-1$. Now suppose that $M$ contains a prime diamond and $\h(\one)=n$. If $M$ contains a loop (respectively, an isthmus) $e$, then by \ref{qP2} there exists an independent atom $a$ such that $e\in M-a$ and $f(M)=f(M-a)+f(M/a)$. Clearly $e$ is a loop (respectively, an isthmus) of $M-a$, and $e\;\cup\; a$ is a loop (respectively, an isthmus) of $M/a$. By the induction hypothesis $f(M-a)=f((M-a)(e))f((M-a)-e)=f((M-a)(e))f(M-(e\;\cup\; a))$ and $f(M/a)=f((M/a)(e\;\cup\; a))f((M/e)-(e\;\cup\; a))$. Since $e$ is a loop (respectively, an isthmus) we have that $M(e)\cong (M-a)(e)\cong (M{/}a)(e\;\cup\; a)$, which means that $f(M(e))=f((M-a)(e))=f((M{/}a)(e\;\cup\; a))$ by ($q$-P1). It follows that
$$f(M)=f(M(e))(f((M-a)-e)+f((M/a)-(e\;\cup\;a))).$$
Since $a$ is not a loop (respectively, not an isthmus) in $M$ and all (respectively, none) of the bases are in $M/e$, $e\cup a$ is not a loop (respectively, an isthmus) of $M/e$, and $M/e$ is not prime-free. By \ref{qP3} we have that 
$f(M/e)=f((M/e)-(e\;\cup\; a))+f((M/e)/(e\cup a))$. 
Since $e$ is dependent (respectively, independent) to any lattice element in $M-a$, by Lemma \ref{lem:m-aisom/a} 
we have that $(M/e)-(e\cup a)\cong M-(e\cup a)$ and $(M/e)/(e\;\cup\; a)\cong (M/a)/(e\;\cup\; a)\cong (M/a)-(e\cup a)$. As a result, we have that
\begin{align*}
    f(M)&=f(M(e))(f(M-(e\cup a))+f((M/a)/(e\cup a)))\\
    &=f(M(e))(f((M/e)-(e\cup a))+f((M/e)/(e\cup a)))\\
    &=f(M(e))f(M/e).\qedhere
\end{align*}
\end{proof}


\section{Further properties of the Tutte and rank generating polynomial}\label{sec:furtherpropertise}

In this section again $M$ may be either a matroid or a $q$-matroid. We present a number of properties of the Tutte polynomial of a $q$-matroid that are the $q$-analogues of already established properties of the Tutte polynomial for matroids.

\subsection{Duality properties}

Let $\varphi: \mL \longrightarrow \mL^*$ be a lattice anti-isomorphism. Recall that the dual of 
$M$ is defined to be $M^*=(\mL^*,r^*)$ where $r^*(\varphi(x)):=\h(\one)-\h(x)-r(M)+r(x)$. 
Moreover, the nullity function of $M^*$ is defined by $\nu^*(\varphi(x))=r(\one)-r(x)$ for each 
$x\in\mathcal{L}$.

\begin{proposition}\label{prop:dual}
\textcolor{white}{.}
{\bf }
   \begin{enumerate}
       \item $\rho(M^*;x,y)=\rho(M;y,x).$
       \item Let $M$ have a Tutte partition $\mP$. Then $\tau(M^*;x,y)=\tau(M;y,x).$
   \end{enumerate}
\end{proposition}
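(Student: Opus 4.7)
For part (1), my plan is a direct change-of-summation. The dual $M^*$ is supported on $\varphi(\mL)$ and, as $z$ ranges over $\mL$, the element $\varphi(z)$ ranges over $\varphi(\mL)$. The idea is simply to re-express the exponents at $\varphi(z)$ using the defining formula $r^*(\varphi(z)) = \nu(\one) - \nu(z)$ together with $h(\varphi(z)) = h(\one) - h(z)$ (which holds since $\varphi$ is a length-preserving lattice anti-isomorphism). A short computation then yields $r^*(\one^*) - r^*(\varphi(z)) = \nu(z)$ and $\nu^*(\varphi(z)) = r(\one) - r(z)$, and substituting into the defining sum for $\rho(M^*;x,y)$ gives $\sum_{z \in \mL} x^{\nu(z)} y^{r(\one)-r(z)} = \rho(M;y,x)$.

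For part (2), my plan is to transport a Tutte partition through the anti-isomorphism. Fix a Tutte partition $\mP$ of $M$ and consider $\mP^* := \{[\varphi(b),\varphi(a)] : [a,b] \in \mP\}$, which clearly partitions $\varphi(\mL)$ into intervals. I will verify that $\mP^*$ is a Tutte partition of $M^*$ by checking the three conditions of Definition~\ref{def:Tutte_partition}. The boundary conditions $r(a)=h(a)$ and $r(b)=r(\one)$ translate, via the dual rank formula, into $r^*(\varphi(b)) = h^*(\varphi(b))$ and $r^*(\varphi(a)) = r^*(\one^*)$. Prime-freeness of $[\varphi(b),\varphi(a)]$ follows from the identity $\w^*([\varphi(y),\varphi(x)]) = 1 - \w([x,y])$ applied cover by cover: under this complementation a prime diamond in the sense of Definition~\ref{def:lat_q-matroid} is mapped to another prime diamond, so the absence of prime diamonds is self-dual. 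Once $\mP^*$ is a Tutte partition, a one-line calculation using $r(a) = h(a)$ and $r(b) = r(\one)$ shows that for $A=[a,b]$ with $r_A(\one_A) = i$ and $\nu_A(\one_A) = j$, the dual block $[\varphi(b),\varphi(a)]$ has rank $j$ and nullity $i$. Summing $x^j y^i$ over $A \in \mP$ yields $\tau(M;y,x)$.

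A cleaner but less structural alternative would be to invoke Corollary~\ref{cor:T-poly_indep_of_choice}: from $\tau(M^*;x,y) = \beta_q * \rho(M^*;x,y)$ combined with part (1), we get $\tau(M^*;x,y) = \beta_q * \rho(M;y,x)$, and I would then observe the symmetry $\beta_q(i,j;a,b) = \beta_q(j,i;b,a)$, which is manifest from the explicit formula (absolute values and maxima are symmetric in the pairs $(i-a)$ and $(j-b)$). This symmetry implies $\beta_q * \rho(M;y,x) = (\beta_q * \rho(M;x,y))(y,x) = \tau(M;y,x)$.

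The main obstacle in the direct approach is the preservation of prime-freeness under $\varphi$. Everything else is a bookkeeping exercise, but this step requires combining the weighting duality $\w^* = 1 - \w$ with the asymmetric characterization of a prime diamond to confirm that the configuration of one ascending weight-$1$ cover and one descending weight-$0$ cover at every intermediate element is self-dual. Once this is clear, both parts fit into a few lines.
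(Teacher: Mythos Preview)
Your proposal is correct and follows essentially the same route as the paper: part (1) by direct re-expression of exponents under the anti-isomorphism, and part (2) by transporting a Tutte partition through $\varphi$, verifying the three conditions of Definition~\ref{def:Tutte_partition} (with prime-freeness handled via the self-duality of prime diamonds), and observing that rank and nullity swap on each block. Your alternative via the manifest symmetry $\beta_q(i,j;a,b)=\beta_q(j,i;b,a)$ combined with part (1) and Corollary~\ref{cor:T-poly_indep_of_choice} is a valid shortcut not taken in the paper.
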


\begin{proof}
For the rank generating polynomial of $M^*$, we have that:
\[
\rho(M^*;x,y)=\displaystyle\sum_{z\in\mathcal{L}}x^{r^*(\varphi(\one))-r^*(\varphi(z))}y^{\nu^*(\varphi(z))}=\displaystyle\sum_{z\in\mathcal{L}}x^{\h(z)-r(z)}y^{r(M)-r(z)}=\rho(M;y,x).
\]

We claim that $\mP^*$, the image of $\mP$ under $\varphi$, is a Tutte partition of $M^*$. 
For any interval $[a,b]$ supporting an element of $\mathcal{P}$, $r(a)=\h(a)$ and $r(b)=r(M)$. Therefore, 
\begin{align*}
   r^*(\varphi(a))&=\h(\one)-\h(a)-r(M)+r(a) = \h(\one)-r(\one)=\nu(\one),\\
   r^*(\varphi(b))&=\h(\one)-\h(b)-r(M)+r(b)=\h(\one)-\h(b)=\h(\varphi(b)).
\end{align*}
It remains to check is that $[\varphi(b),\varphi(a)]$ is prime-free. Since the image of any diamond in $\mathcal{L}$ is a diamond in $\mathcal{L}^*$, 
we need only consider the duals of matroidal diamonds. Clearly, the only diamond whose dual is prime is a prime itself and so $[\varphi(b),\varphi(a)]$ is prime-free. It follows that $\mP^*$ is a Tutte partition.
For each member $P \in \mP$, let $P^*$ denote its image under $\varphi$ in $P^*$. 
Then $r^*_{P^*}(P^*) = \nu(P)$ and $\nu_{P^*}^*(P^*)=r_P(P)$ and so we have:
\begin{align*}
    \tau_{\mP^*}(M^*;x,y) = \sum_{P^* \in \mP^*} x^{r^*_{P^*}(M^*(P^*))}y^{\nu_{P^*}^*(M^*(P^*))} = \sum_{P \in \mP} x^{\nu_P(M(P))}y^{r_P(M(P))}= \tau_{\mP}(M;y,x).
\end{align*}
The result now follows by the invariance of $\tau_{\mP}(M;x,y)$ under the choice of $\mP.$
\end{proof}

\begin{remark}
    In fact, Part 2 of Proposition \ref{prop:dual} follows easily from Part 1, since $\beta_q*\rho(M^*;x,y) =\beta_q*\rho(M;y,x)$.
    However, the proof above is independent of the rank generating polynomial.
\end{remark}

\subsection{Representations and evaluations}

We consider now structural properties of $M$ that can be extracted from its Tutte polynomial.

\begin{lemma}\label{lem:tutteeval11}
    Every Tutte partition of $M$ has the same cardinality.
\end{lemma}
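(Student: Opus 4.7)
The plan is to evaluate the Tutte polynomial at the point $(x,y)=(1,1)$ and exploit the fact, established in Corollary~\ref{cor:T-poly_indep_of_choice}, that $\tau_{\mathcal{P}}(M;x,y)$ does not depend on the choice of Tutte partition $\mathcal{P}$.

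First I would observe that directly from Definition~\ref{def:q-Tutte_poly},
\[
   \tau_{\mathcal{P}}(M;1,1) \;=\; \sum_{A\in\mathcal{P}} 1^{r_A(\one_A)}\, 1^{\nu_A(\one_A)} \;=\; |\mathcal{P}|,
\]
so the cardinality of any Tutte partition is recovered from a single evaluation of its associated Tutte polynomial. Next I would invoke Corollary~\ref{cor:T-poly_indep_of_choice}, which tells us that if $\mathcal{P}_1$ and $\mathcal{P}_2$ are two Tutte partitions of $M$, then $\tau_{\mathcal{P}_1}(M;x,y)=\tau_{\mathcal{P}_2}(M;x,y)$ as polynomials in $\Z[x,y]$. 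Evaluating both sides at $(1,1)$ then yields $|\mathcal{P}_1|=|\mathcal{P}_2|$, which is the desired equality.

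There is no real obstacle here: the lemma is essentially a corollary of the invariance of $\tau(M;x,y)$ under choice of Tutte partition, which was itself obtained via the inversion formula of Section~\ref{sec:inv} together with the relation $\rho(M;x,y)=\alpha_q*\tau_{\mathcal{P}}(M;x,y)$ of Proposition~\ref{thm:Tutte-to-rank_convo}. In particular, one should note that since $|\mathcal{P}|$ equals $\tau(M;1,1)$, this common cardinality is in fact a genuine invariant of the matroid or $q$-matroid $M$, independent of any partition chosen to compute it.
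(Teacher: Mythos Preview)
Your proof is correct and follows exactly the same approach as the paper: evaluate $\tau_{\mathcal{P}}(M;x,y)$ at $(1,1)$ to obtain $|\mathcal{P}|$, and then invoke Corollary~\ref{cor:T-poly_indep_of_choice} to conclude that this value is independent of $\mathcal{P}$. The paper's proof is slightly more terse, writing only the single displayed equation $\tau(M;1,1)=\tau_{\mathcal{P}}(M;1,1)=\sum_{P\in\mathcal{P}}1^{r_P(M(P))}1^{\nu_P(M(P))}=|\mathcal{P}|$, but the content is identical.
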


\begin{proof}
Let $\mP$ be a Tutte partition of $M$. Then
   \begin{equation*}
       \tau(M;1,1)=\tau_{\mP}(M;1,1) = \sum_{P \in \mP} 1^{r_P(M(P))}1^{\nu_P(M(P))} = |\mP|.\qedhere
   \end{equation*}
\end{proof}

An immediate corollary of Lemma \ref{lem:tutteeval11} and Corollary \ref{cor:crapo-part_is_tutte-part}, is that in the case that $M$ is a matroid, $\tau(M;1,1)$ counts the number of bases of $M$. However, in the case that $M$ is a $q$-matroid the evaluation $\tau(M;1,1)$ does not return the number of bases. On the other hand, the evaluation of the rank generating function of $M$ at $(0,0)$ is given by $\rho(M;0,0) = |\{a \in \mL : r(a)=r(\one), \nu(a)=0\}|$, which is precisely the number of bases of $M$, just as in the case that $M$ is a matroid \cite[Proposition 6.2.1]{brylawski_oxley_1992}. Similarly, $\rho(M;1,0)$ gives the number of independent elements of $M$ in $\mL$.

Another way to represent the Tutte polynomial of $M$ is as a matrix.
If we gather together terms corresponding to prime-free minors of the same rank and nullity, we obtain:
\[
    \tau(M;x,y) = \sum_{i=0}^{r(\one)}\sum_{j=0}^{\nu(\one)} \tau_{i,j} x^{i}y^{j}. 
\]
Clearly, for each $(i,j)\in [r(\one)+1] \times [\nu(\one)+1]$, we have $\tau_{i,j}=|\{ P \in \mP : r_P(P)=i, \nu_P(P)=j\}|$ for any Tutte partition $\mP$: that is, the coefficients of $\tau(M;x,y)$ count the number of prime-free minors of a Tutte partition of a given rank and nullity.
Now consider the representation of these Tutte coefficients as an $(r(\one)+1) \times (\nu(\one)+1)$ matrix $\tau = ( \tau_{i,j} )$.

\begin{definition}\label{def:tracescalar}
    Let $\F$ be a field and let $s,t$ be positive integers. 
    We define a bilinear form $\langle \cdot, \cdot \rangle:\F^{s \times t}\times \F^{s \times t}\longrightarrow \F$ by
    $\langle A, B\rangle:=\tr(AB^T)$ for all $A,B \in \F^{s \times t}.$
\end{definition}

\noindent It is straightforward to check that for any $A,B \in \F^{s \times t},$ we have 
$\displaystyle \langle A, B\rangle:=\sum_{i=1}^s\sum_{j=1}^t A_{i,j}B_{i,j}$. 
We'll show that we can retrieve the number of bases of $M$ by an evaluation of the linear form $\tau:=\langle \tau,\cdot \rangle$.

\begin{proposition}
   Let $Q$ be the $(r(\one)+1) \times (\nu(\one)+1)$ matrix $Q = (q^{ij})$. Then the number of bases of $M$ is $\langle \tau, Q \rangle$.
\end{proposition}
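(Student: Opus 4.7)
The plan is to interpret $\langle \tau, Q\rangle = \sum_{i,j} \tau_{i,j}\, q^{ij}$ as a basis count via a decomposition along an arbitrary Tutte partition $\mathcal{P}$ of $M$. The strategy has two ingredients: first, the bases of $M$ distribute disjointly among the minors supported on the intervals of $\mathcal{P}$; second, a prime-free minor of rank $i$ and nullity $j$ has exactly $q^{ij}$ bases. Summing gives the result.

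For the first step, I would fix $A = [a,b] \in \mathcal{P}$ and use the defining Tutte partition properties $r(a) = h(a)$ (so $\nu(a) = 0$) and $r(b) = r(\one)$. For any $x \in A$, these give $r_A(x) = r(x) - r(a)$ and $\nu_A(x) = \nu(x)$, so $x$ satisfies $r_A(x) = r_A(\one_A)$ and $\nu_A(x) = 0$ if and only if $r(x) = r(\one)$ and $\nu(x) = 0$. That is, $x$ is a basis of the minor $M(A)$ exactly when it is a basis of $M$. Since $\mathcal{P}$ partitions $\mL$, the bases of $M$ split disjointly as $\bigsqcup_{A \in \mathcal{P}} \mathcal{B}(M(A))$.

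For the second step, let $A \in \mathcal{P}$ be prime-free with $r_A(\one_A) = i$ and $\nu_A(\one_A) = j$. By Theorem \ref{th:no_prime_from_clopen}, the minor $M(A)$ admits a unique totally clopen element $z$, and by Lemma \ref{lem:clopen_properties}, the bases of $M(A)$ are precisely the complements of $z$ inside the lattice $A$. In the $q$-matroid case, $A$ corresponds to a vector space of dimension $i+j$ in which $z$ is a subspace of dimension $j$, so Theorem \ref{thm:tingley} gives exactly $q^{j(i+j-j)} = q^{ij}$ complements. In the matroid case (with $q = 1$), Lemma \ref{lem:one_base_prime_free} says the prime-free minor $M(A)$ has a unique basis, again matching $1^{ij} = 1$. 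Hence in either setting, $|\mathcal{B}(M(A))| = q^{r_A(\one_A)\, \nu_A(\one_A)}$.

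Combining the two ingredients, and using that $\tau_{i,j}$ counts the members of $\mathcal{P}$ of rank $i$ and nullity $j$ (a grouping that is well defined by Corollary \ref{cor:T-poly_indep_of_choice}), one obtains
\[
|\mathcal{B}(M)| \;=\; \sum_{A \in \mathcal{P}} q^{r_A(\one_A)\, \nu_A(\one_A)} \;=\; \sum_{i=0}^{r(\one)} \sum_{j=0}^{\nu(\one)} \tau_{i,j}\, q^{ij} \;=\; \langle \tau, Q\rangle,
\]
where the last equality is the definition of the bilinear form of Definition \ref{def:tracescalar} applied to $Q_{i,j} = q^{ij}$. The only nontrivial obstacle is the basis-count for a prime-free minor, but this is immediate from Lemma \ref{lem:clopen_properties} together with Theorem \ref{thm:tingley}; everything else is bookkeeping from the Tutte partition axioms.
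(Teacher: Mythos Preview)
Your argument is correct and follows essentially the same approach as the paper: take a Tutte partition, observe that the bases of $M$ are exactly the bases of the constituent prime-free minors, and count the bases of each prime-free minor as the complements of its totally clopen element via Theorem~\ref{thm:tingley}. If anything, your write-up is more careful than the paper's, since you explicitly verify both directions of the correspondence between bases of $M$ and bases of the minors $M(A)$, whereas the paper only states one direction.
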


\begin{proof}
    Let $\mP$ be a Tutte partition of $M$ and let $P \in \mP$. Then $P$ has a unique totally clopen element $c$, and every complement of $c$ in $P$ is a basis in $P$. Moreover, every such basis is also a basis in $M$, since $r(P)=r(\one)$. If $P$ has rank $i$ and nullity $j$, then the number of complements of $c$ in $P$ is $q^{ij}$. It follows that the total number of bases of $M$ is 
    \[
       \sum_{i=0}^{r(\one)} \sum_{j=0}^{\nu(\one)} \tau_{i,j} q^{ij} = \langle \tau, Q \rangle.\qedhere
     \]
\end{proof}
Note that in the case that $M$ is a matroid, if we let $q\longrightarrow 1$ we retrieve that the number of bases of $M$ is
$\langle \tau, J \rangle = \tau(M;1,1)$, where $J$ denotes the all-ones matrix.

Similarly, we have 
$\displaystyle \rho(M;x,y) = \sum_{a=0}^{r(\one)}\sum_{b=0}^{\nu(\one)} \rho_{a,b} x^a y^b,$ 
where $\rho_{a,b}$ counts the number of elements of $\mL$ of rank-lack $a$ and nullity $b$ in $M$, so we may represent the rank coefficents as the $(r(\one)+1) \times (\nu(\one)+1)$ matrix $\rho = (\rho_{a,b})$. If $E_{i,j} $ 
is the matrix with $(i,j)$-entry $1$ and zeroes elsewhere, then we have that the number of bases of $M$ is: 
\[
   \langle \tau, Q \rangle = \langle \beta_q * \rho, Q \rangle = \langle \alpha_q * \tau, E_{0,0} \rangle =
   \langle \rho, E_{0,0}\rangle.
\]
Proposition \ref{prop:dual} now says that the Tutte and rank functions of $M^*$ are obtained by matrix transposition $\tau^T$ and $\rho^T$ of the Tutte and rank functions of $M$, respectively.

\section{Towards a combinatorial characterisation of the Tutte polynomial for all $q$-matroids}
Theorem~\ref{thm:rank_to_tutte} gives a characterisation of a Tutte polynomial as a certain convolution of the rank generating polynomial of any $q$-matroid. However, Theorem~\ref{thm:T-comp_partition_exist} yields a combinatorial characterisation of the Tutte polynomial only for ($q$-)matroids whose support lattices admit a proper interval decomposition. In this section, we outline a limitation of Definition~\ref{def:Tutte_partition}, but we show a possible generalisation.

It is shown in \cite{HITZEMANN20103551} that there does not exist a proper interval decomposition of $\mathcal{L}(\mathbb{F}_2^n)$ for $n\geq 5$. Since our proof of the existence of a Tutte partition used proper interval decompositions, we cannot thus conclude that a Tutte partition always exists for $q$-matroids on such lattices. 
In Example~\ref{ex:no_tutte} we give a $q$-matroid that does not admit a Tutte partition. For this example, we need the following definition of the direct sum of $q$-matroids. Assume that all vector spaces in Definition~\ref{def:dir_sum} and Notation~\ref{not:dir_sum} are over the same field.

\begin{definition}[\cite{dir_sum_jur_cer}]\label{def:dir_sum}
    Let $M_1=(\mathcal{L}(V_1),r_1)$ and $M_2=(\mathcal{L}(V_2),r_2)$ be $q$-matroids such that $V_1$ has dimension $n_1$ and $V_2$ has dimension $n_2$. Let $V$ be an $(n_1+n_2)$-dimensional vector space and let $V_1$ and $V_2$ be identified with their embeddings in $V$ such that $V=V_1\oplus V_2$. The \emph{direct sum of $M_1$ and $M_2$}, denoted $M_1\oplus M_2$, is the $q$-matroid $(\mathcal{L}(V),r)$ such that
    $$r(A)=\min_{X\leq A}\{r_1(\pi_1(X))+r_2(\pi_2(X))+\text{dim}(A)-\text{dim}(X)\}$$
    where $\pi_i:V\rightarrow V_i$ is the canonical projection for $i=1,2$.
\end{definition}

We will also use the following convenient notation.

\begin{notation}\label{not:dir_sum}
    Let $V_1$, $V_2$, $\pi_1$, $\pi_2$ and $V$ be as in Definition~\ref{def:dir_sum}. Let $X_1\leq V_1$ and $X_2\leq V_2$. We write $X_1\oplus X_2$ to denote the subspace of $V$ such that $\pi_i(X_1\oplus X_2)=X_i$ for $i=1,2$. We use the notation $\mathcal{U}_{k,n}(q)$ to denote the uniform matroid $(\mathcal{L}(\mathbb{F}_q^n),r)$, which means that $r(A)=\dim(A)$ if $\dim(A)\leq k$ and has value $k$ otherwise. In particular, $r(\mathbf{1})=k$ in $\mathcal{U}_{k,n}(q)$.
\end{notation}

\begin{example}\label{ex:no_tutte}
    Let $\mathcal{U}_{1,2}(2)$ and $\mathcal{U}_{0,3}(2)$ be defined on the spaces $\langle e_1,e_2\rangle$ and $\langle e_3,e_4,e_5\rangle$ respectively. Let $M=(\mathcal{L},r):=\mathcal{U}_{1,2}(2)\oplus \mathcal{U}_{0,3}(2)$. Suppose that $M$ admits a Tutte partition $\mathcal{P}$. Since all loops must be contained in an interval in a Tutte partition, we must have that $[\mathbf{0},0\oplus\langle e_3,e_4,e_5\rangle]\subseteq I$ for some $I\in\mathcal{P}$ since $r(0\oplus\langle e_3,e_4,e_5\rangle)=0$. Since the maximal element of any interval in a Tutte partition must be spanning and since $r(\langle x\rangle\oplus\langle e_3,e_4,e_5\rangle)=1=r(\mathbf{1})$ for any $x\leq\langle e_1,e_2\rangle$, we may assume, without loss of generality, that $[\mathbf{0},\langle e_2\rangle\oplus\langle e_3,e_4,e_5\rangle]\subseteq I$. Since $M$ is not prime-free and $\langle e_2\rangle\oplus\langle e_3,e_4,e_5\rangle$ is a coatom, we deduce that $[\mathbf{0},\langle e_2\rangle\oplus\langle e_3,e_4,e_5\rangle]=I$. Since $r(\mathbf{1})=1$ and all of the loops of $M$ are contained in $I$, we have that the chain $C(a,\mathbf{1})$ is a clopen chain for any atom $a\in\mathcal{L}$ not in $I$. We can thus say, without loss of generality, that $J:=[\langle e_1\rangle\oplus0,\mathbf{1}]$ is in $\mathcal{P}$. By the same reasoning, the chain $C(a',c)$ is clopen for any atom $a'\in\mathcal{L}$ and coatom $c\in\mathcal{L}$ not in $I$ or $J$ (with $a'\leq c$). Therefore, all other intervals in $\mathcal{P}$ must have length three and be isomorphic to $U_{0,3}(2)$. We thus conclude that $\mathcal{P}$ is a proper interval decomposition of the subspace lattice of $\mathbb{F}_2^5$, which contradicts \cite[Theorem 3]{HITZEMANN20103551}. Therefore, $M$ does not admit a Tutte partition.
\end{example}

Example~\ref{ex:no_tutte} shows us that we indeed need a more general definition of the Tutte polynomial in order for it to apply to all $q$-matroids. We conjecture that the condition of the Tutte partition being a \emph{partition} is too strong. We give a proposed remedy for Example~\ref{ex:no_tutte} in the following example, which may lead to the desired definition of the Tutte polynomial.

\begin{example}\label{ex:tutte_remedy}
    Let $M$ be the $q$-matroid given in Example~\ref{ex:no_tutte}. Computing the rank generating polynomial of $M$ we get $\rho(M;x,y)=24+x+7xy+7xy^2+xy^3+148y+154y^2+31y^3+y^4$. After we apply the convolution $\beta$ to $\rho(M;x,y)$ we get $\beta*\rho(M;,x,y)=xy^3+y^4+15y^3$. It is possible, using Hall's theorem, to get a matching between the atoms and coatoms of $\mathcal{L}$ that are not contained in the intervals $I$ and $J$, which gives us a set of 15 intervals, all of which are isomorphic as minors of $M$ to $\mathcal{U}_{0,3}(2)$. By \cite[Theorem 3]{HITZEMANN20103551}, there must be some non-empty intersections between these intervals. Nonetheless, we clearly get a polynomial that coincides with $\beta*\rho(M;x,y)$ when we take the sum of the Tutte polynomials of these intervals with those of $I$ and $J$.
\end{example}

We have not in Example~\ref{ex:tutte_remedy} proposed a suggestion of a formal definition of the Tutte polynomial. However, this example suggests that it may be possible to successfully generalise the notion of a Tutte partition, which would give a combinatorial definition of the Tutte polynomial for arbitrary $q$-matroids.

\section{Acknowledgements}

The authors are very grateful to the anonymous referees for their insights and comments and for their careful reading of our manuscript. Their comments greatly improved this paper.
We are indebted to the authors of \cite{BCJ17}, namely Guus Bollen, Henry Crapo, and Relinde Jurrius, whose preprint provided an invaluable basis for this work. Our notion of a Tutte partition (see Definition \ref{def:Tutte_partition}) had its origins in \cite{BCJ17}, wherein it was proposed that the Tutte polynomial could be defined via a partition of the underlying lattice $\mL$ into prime-free minors.
  The inversion formula established in Theorem \ref{thm:rank_to_tutte} of this paper provides a way to establish that the Tutte polynomial is independent of the choice of Tutte partition via convolution with the rank generating polynomial. The formula itself was proposed as \cite[Conjecture 1]{BCJ17}.  
We thank Luke Postle for his helpful remarks and in particular for drawing our attention to \cite{HITZEMANN20103551}, which answered a question we had concerning the existence of proper interval decompositions in subspace lattices while revising this paper.
This publication has emanated from research
conducted with the financial support of Science Foundation Ireland (18/CRT/6049) and the European Union MSCA Doctoral Networks, (HORIZON-MSCA-2021-DN-01, Project 101072316).

\bibliographystyle{abbrv}
\bibliography{References}

\begin{thebibliography}{10}

\bibitem{alfarano2023critical}
G.~N. Alfarano and E.~Byrne.
\newblock The critical theorem for $q$-polymatroids.
\newblock \url{https://arxiv.org/abs/2305.07567}, 2023.

\bibitem{BERNARDI2022108355}
O.~Bernardi, T.~Kálmán, and A.~Postnikov.
\newblock {Universal Tutte polynomial}.
\newblock {\em Advances in Mathematics}, 402:108355, 2022.

\bibitem{BCJ17}
G.~Bollen, H.~Crapo, and R.~Jurrius.
\newblock {The Tutte $q$-polynomial}.
\newblock \url{https://arxiv.org/abs/1707.03459}, 2017.

\bibitem{britztutte}
T.~Britz.
\newblock {Code enumerators and Tutte polynomials}.
\newblock {\em IEEE Transactions on Information Theory}, 56:4350--4358, 2010.

\bibitem{brylawski_oxley_1992}
T.~Brylawski and J.~Oxley.
\newblock {\em {The Tutte Polynomial and Its Applications}}, page 123–225.
\newblock Encyclopedia of Mathematics and its Applications. Cambridge
  University Press, 1992.

\bibitem{brylowski_ams}
T.~H. Brylawski.
\newblock A decomposition for combinatorial geometries.
\newblock {\em Transactions of the American Mathematical Society},
  171:235--282, 1972.

\bibitem{brylowski_alguniv}
T.~H. Brylawski.
\newblock The {T}utte-{G}rothendieck ring.
\newblock {\em Algebra Universalis}, 2:375--388, 1972.

\bibitem{BCJ_JCTB}
E.~Byrne, M.~Ceria, and R.~Jurrius.
\newblock Constructions of new $q$-cryptomorphisms.
\newblock {\em Journal of Combinatorial Theory, Series B}, 153:149--194, 2022.

\bibitem{byrne2019assmus}
E.~Byrne and A.~Ravagnani.
\newblock An {A}ssmus--{M}attson {T}heorem for {R}ank {M}etric {C}odes.
\newblock {\em SIAM Journal on Discrete Mathematics}, 33(3):1242--1260, 2019.

\bibitem{CAMERON2022105584}
A.~Cameron and A.~Fink.
\newblock {The Tutte polynomial via lattice point counting}.
\newblock {\em Journal of Combinatorial Theory, Series A}, 188:105584, 2022.

\bibitem{dir_sum_jur_cer}
M.~Ceria and R.~Jurrius.
\newblock {The direct sum of $q$-matroids}.
\newblock {\em Journal of Algebraic Combinatorics}, 59:291--330, 2024.

\bibitem{qdist}
C.~Charalambides.
\newblock {\em {Discrete $q$-Distributions}}.
\newblock Wiley, 2016.

\bibitem{crapo1964theory}
H.~Crapo.
\newblock {\em On the theory of combinatorial independence}.
\newblock PhD thesis, Massachusetts Institute of Technology, Department of
  Mathematics, 1964.

\bibitem{C69}
H.~Crapo.
\newblock {The Tutte polynomial}.
\newblock {\em Aeqationes Mathematicae}, 3:211--229, 1969.

\bibitem{shellability}
S.~R. Ghorpade, R.~Pratihar, and T.~H. Randrianarisoa.
\newblock Shellability and homology of $q$-complexes and $q$-matroids.
\newblock {\em Journal of Algebraic Combinatorics}, 56:1135--1162, 2022.

\bibitem{GLJindep}
H.~Gluesing-Luerssen and B.~Jany.
\newblock {Independent spaces of $q$-polymatroids}.
\newblock {\em Algebraic Combinatorics}, 5(4):727--744, 2022.

\bibitem{GLJ}
H.~Gluesing-Luerssen and B.~Jany.
\newblock {$q$-Polymatroids and their relation to rank-metric codes}.
\newblock {\em Journal of Algebraic Combinatorics}, 56:1--29, 2022.

\bibitem{goldmanrota}
J.~Goldman and R.~G.-C.
\newblock The number of subspaces of a vector space.
\newblock In W.~T. Tutte, editor, {\em Recent Progress in Combinatorics -
  Proceedings of the Third Waterloo Conference on Combinatorics, May 1968},
  pages 75--84. Academic Press, 1969.

\bibitem{gorla2019rank}
E.~Gorla, R.~Jurrius, H.~H. L{\'o}pez, and A.~Ravagnani.
\newblock {Rank-metric codes and $q$-polymatroids}.
\newblock {\em Journal of Algebraic Combinatorics}, 52:1--19, 2019.

\bibitem{gratz_lattice}
G.~Gr{\"a}tzer.
\newblock {\em Lattice Theory: Foundation}.
\newblock Birkh{\"a}user, 2011.

\bibitem{HITZEMANN20103551}
S.~Hitzemann and W.~Hochstättler.
\newblock {On the combinatorics of Galois numbers}.
\newblock {\em Discrete Mathematics}, 310(24):3551--3557, 2010.

\bibitem{IMAMURA}
K.~Imamura and K.~Shiromoto.
\newblock {Critical problem for codes over finite chain rings}.
\newblock {\em Finite Fields and Their Applications}, 76:101900, 2021.

\bibitem{B22_proj}
B.~Jany.
\newblock {The projectivisation matroid of a $q$-matroid}.
\newblock {\em SIAM Journal of Applied Algebra and Geometry}, 7, 2023.

\bibitem{JPR22}
T.~Johnsen, R.~Pratihar, and T.~H. Randrianarisoa.
\newblock {The Euler characteristic, $q$-matroids, and a Möbius function}.
\newblock \url{https://arxiv.org/abs/2210.12483}, 2022.

\bibitem{JP18}
R.~Jurrius and R.~Pellikaan.
\newblock Defining the $q$-analogue of a matroid.
\newblock {\em Electronic Journal of Combinatorics}, 25(3):P3.2, 2018.

\bibitem{oxley}
J.~Oxley.
\newblock {\em Matroid Theory}.
\newblock Oxford University Press, 2nd edition, 2011.

\bibitem{romanlattices}
S.~Roman.
\newblock {\em Lattices and Ordered Sets}.
\newblock Springer, 2008.

\bibitem{shiromoto19}
K.~Shiromoto.
\newblock {Codes with the rank metric and matroids}.
\newblock {\em Designs, Codes and Cryptography}, 87(8):1765--1776, 2019.

\bibitem{stanley:1997}
R.~P. Stanley.
\newblock {\em {Enumerative combinatorics}}, volume~1.
\newblock Cambridge {University Press}, Cambridge, 1997.

\bibitem{tingley91}
D.~Tingley.
\newblock {Complements of linear subspaces}.
\newblock {\em Mathematics Magazine}, 64:98–103, 1991.

\bibitem{tutte_1947}
W.~T. Tutte.
\newblock A ring in graph theory.
\newblock {\em Mathematical Proceedings of the Cambridge Philosophical
  Society}, 43(1):26–40, 1947.

\bibitem{tutte}
W.~T. Tutte.
\newblock A contribution to the theory of chromatic polynomials.
\newblock {\em Canadian Journal of Mathematics}, 6:80--91, 1954.

\end{thebibliography}

\end{document}